\theoremstyle{plain}
\newtheorem{thm}{Theorem}[]
\newtheorem{cor}[thm]{Corollary}
\newtheorem{lem}[thm]{Lemma}
\newtheorem{prop}[thm]{Proposition}
\theoremstyle{definition}
\renewcommand{\P}{\mathbb{P}}
\newcommand{\E}{\mathbb{E}}
\newcommand{\ind}{\mathbbm{1}}
\newcommand{\eps}{\varepsilon}
\newcommand{\bp}{\begin{proof}}
\newcommand{\ep}{\end{proof}}
\newcommand{\Pnl}{\mathbb{P}_{n,\lambda}}
\newcommand{\Enl}{\mathbb{E}_{n,\lambda}}
\newcommand{\pnl}{p_{n,\lambda}}
\newcommand{\Cc}{\mathcal{C}}
\def\bal#1\eal{\begin{align*}#1\end{align*}}
\author{Matthew I. Roberts\thanks{University of Bath, Department of Mathematical Sciences, Bath BA2 7AY, UK. \texttt{mattiroberts@gmail.com}}}
\title{The probability of unusually large components in\\ the near-critical Erd\H{o}s-R\'enyi graph}
\begin{document}
\maketitle

\begin{abstract}
The largest components of the critical Erd\H{o}s-R\'enyi graph, $G(n,p)$ with $p=1/n$, have size of order $n^{2/3}$ with high probability. We give detailed asymptotics for the probability that there is an unusually large component, i.e.~of size $an^{2/3}$ for large $a$. Our results, which extend work of Pittel, allow $a$ to depend upon $n$ and also hold for a range of values of $p$ around $1/n$. We also provide asymptotics for the distribution of the size of the component containing a particular vertex.

\vspace{2mm}

\noindent
\emph{Keywords:} Erd\H{o}s-R\'enyi, random graph, component size, critical window.\\
\emph{MSC class:} 60C05, 05C80.
\end{abstract}

\section{Introduction}\label{sec:intro}

The Erd\H{o}s-R\'enyi graph $G(n,p)$ is perhaps the simplest interesting random graph model. We take $n$ vertices and connect each pair by an edge with probability $p$, independently of all other pairs. If $p=c/n$, then the graph undergoes a phase transition as $c$ passes $1$; for $c<1$, the largest connected components are of order $\log n$, whereas for $c>1$ the largest component is instead of order $n$ (both with probability tending to $1$ as $n\to\infty$). See for example \cite{bollobas_book} for more details.

When $p$ is close to $1/n$ the graph shows very different behaviour. Within the \emph{critical window}, i.e.~when $p=\frac{1}{n}(1+\lambda n^{-1/3})$ for constant $\lambda$, the size of the biggest component is of order $n^{2/3}$ with probability tending to $1$ as $n\to\infty$. Much more is known, and some of the most precise estimates are due to Pittel \cite{pittel:largest_cpt_rg}. In this article we are interested in the asymptotic probability that the largest component is unusually large, i.e.~$\P(L_1\ge an^{2/3})$ where $L_1$ is the size (number of vertices) of the largest component and $a\to\infty$. In \cite[Proposition 2]{pittel:largest_cpt_rg}, Pittel gave an asymptotic for $\lim_{n\to\infty} \P(L_1\ge an^{2/3})$ as $a\to\infty$ for fixed $\lambda$.

In fact the method that Pittel used to obtain his result is substantially more robust than the result itself implies; the asymptotic holds for a wider range of values of $\lambda$ and $a$, which may for example go to infinity with $n$. A similar method can also be used to approximate $\P(L_1=k)$ for $k\approx an^{2/3}$ with $a$ large. The aim of this article is to make these more general results precise, and rehash the methods of Pittel (which in turn owe much to another paper of \L{}uczak, Pittel and Wierman \cite{luczak_et_al:structure_RG}) to prove them. Our proofs consist largely of elementary approximations applied very precisely, but we believe the results could be useful given the increasing interest in near-critical random graph models and therefore are worth writing clearly in a general form. Indeed we have already used them in work on a dynamical version of the Erd\H{o}s-R\'enyi graph \cite{roberts_sengul:dynamical_ER}.

We should remark that Pittel proved several other results in his article \cite{pittel:largest_cpt_rg}, which we make no attempt to rework. Our results are based around his Proposition 2. We also note here that the constant $(2\pi)^{1/2}$ in the denominator of Pittel's result should instead be $(9\pi/8)^{1/2}$; there is a small oversight between the last line of page 266 and the first line of page 267 in \cite{pittel:largest_cpt_rg}.

\section{Results and further discussion}

\subsection{Main results}

Let $\pnl = n^{-1} + \lambda n^{-4/3}$, and let $\Pnl$ be the law of an Erd\H{o}s-R\'enyi random graph with $n$ vertices and edge probability $\pnl$. (We allow $\lambda$ to depend on $n$.) To state our main results, define
\[G_\lambda(x) = \frac{x^3}{8} - \frac{\lambda x^2}{2} + \frac{\lambda^2 x}{2} = \frac{x}{8}(x-2\lambda)^2,\]
\[\Cl[error]{M1err} = \Cr{M1err}(k,n,\lambda) = \frac{k^4}{n^3} + \frac{|\lambda|}{n^{1/12}} + \frac{|\lambda|^3 k}{n} + \frac{n^{2/3}}{k} + \frac{1}{n^{1/10}}\]
and
\[\Cl[error]{A1err} = \Cr{A1err}(n) = n^{1/4}e^{-n^{1/4}/80}.\]

\begin{thm}\label{thm:YZ}
Suppose that $-n^{1/12}\le \lambda \le \frac15 n^{1/12}$ and $(3\lambda\wedge 1)n^{2/3}\le k \le n^{3/4}$. Let $a=k/n^{2/3}$. Then
\begin{enumerate}[(a)]
\item $\displaystyle \hspace{5mm} \Pnl(L_1 = k) = \frac{k^{1/2}}{(8\pi)^{1/2}n} e^{-G_\lambda(a)} (1+O(\Cr{M1err})) + O(\Cr{A1err})$,
\item $\displaystyle \hspace{5mm} \Pnl(L_1 \ge k) = \frac{a^{1/2}}{(8\pi)^{1/2}G'_\lambda(a)}e^{-G_\lambda(a)} (1+O(\Cr{M1err})) + O(\Cr{A1err}).$
\end{enumerate}
\end{thm}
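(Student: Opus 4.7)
My plan is to follow Pittel's first-moment method with explicit error bookkeeping. Let $N_k$ denote the number of components of size $k$. The starting identity is
\[
\Enl[N_k] \;=\; \binom{n}{k}\, C_k(\pnl)\,(1-\pnl)^{k(n-k)},
\]
where $C_k(p)$ is the probability that $G(k,p)$ is connected. I would expand this by the ``excess'' $\ell$ of edges beyond a spanning tree,
\[
C_k(\pnl) \;=\; \sum_{\ell\ge 0} C(k,\ell)\,\pnl^{k-1+\ell}\,(1-\pnl)^{\binom{k}{2}-(k-1+\ell)},
\]
using Cayley's formula $C(k,0)=k^{k-2}$ together with Wright-type asymptotics $C(k,\ell)\asymp d_\ell\, k^{k-2+3\ell/2}$, and non-asymptotic upper bounds to control the tail of the $\ell$-sum uniformly.

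The heart of the calculation is to combine Stirling's expansion of $\binom{n}{k}$ (retaining corrections of order $k^2/n$, $k^3/n^2$ and $k^4/n^3$) with the Taylor series $\log(1-\pnl)=-\pnl-\pnl^2/2-\pnl^3/3-\cdots$ applied to the large exponent $k(n-k)+\binom{k}{2}-(k-1+\ell)$, after substituting $\pnl = n^{-1}(1+\lambda n^{-1/3})$. The coefficients $\tfrac18$, $-\tfrac{\lambda}{2}$, $\tfrac{\lambda^2}{2}$ of $a^3$, $a^2$, $a$ in $G_\lambda(a)$ arise from nontrivial cancellations between the Stirling remainders and the log-Taylor terms at the orders $k^3/n^2=a^3$, $\lambda k^2/n^{4/3}=\lambda a^2$, and $\lambda^2 k/n^{5/3}=\lambda^2 a$; the Wright sum over $\ell\ge 1$ converts the naive prefactor arising from trees alone into the sharp constant $(8\pi)^{-1/2}$ and is absorbed into the multiplicative $(1+O(\Cr{M1err}))$. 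The five summands of $\Cr{M1err}$ each have a distinct origin: $k^4/n^3$ from higher-order Stirling, $|\lambda|n^{-1/12}$ from discarded cubic-in-$\lambda$ log terms, $|\lambda|^3 k/n$ from interaction between $\lambda$ and the excess sum, $n^{2/3}/k$ from the Stirling $O(1/k)$ remainder, and $n^{-1/10}$ as a uniform slack near the boundary of the allowed range.

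To pass from $\Enl[N_k]$ to $\Pnl(L_1=k)$, write
\[
\Pnl(L_1=k) \;=\; \Pnl(N_k\ge 1) \;-\; \Pnl(N_k\ge 1,\,L_1>k),
\]
and bound both deviations by a second-moment estimate. The factorial moment $\Enl[N_k(N_k-1)]$ is computed in parallel to $\Enl[N_k]$ but over two disjoint $k$-sets, giving essentially $\binom{n}{k}\binom{n-k}{k}\,C_k(\pnl)^2\,(1-\pnl)^{2k(n-k)-k^2}$, and an analogous bound for $\sum_{k'>k}\Enl[N_k N_{k'}]$ handles the second term; both are shown to be small multiples of $\Enl[N_k]$ in the specified range. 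The additive $O(\Cr{A1err})=O(n^{1/4}e^{-n^{1/4}/80})$ captures the truly atypical event $\{L_1>n^{3/4}\}$, which is dealt with by a crude union bound whose exponential rate is not optimised but is sufficient.

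Part (b) then follows from (a) by summation. Writing $k'=k+j$ and Taylor-expanding $G_\lambda((k+j)n^{-2/3}) = G_\lambda(a)+G'_\lambda(a)\,j n^{-2/3}+O(\cdot)$, the sum $\sum_{j\ge 0}\Pnl(L_1=k+j)$ becomes essentially geometric with ratio $e^{-G'_\lambda(a) n^{-2/3}}$, contributing a factor $n^{2/3}/G'_\lambda(a)$. Combined with the $k^{1/2}/n=a^{1/2}n^{-2/3}$ prefactor from (a), this gives the claimed $a^{1/2}/((8\pi)^{1/2}G'_\lambda(a))$. The main obstacle throughout will be the uniform control of the Wright sum over $\ell$ in the regime where $a$ itself is permitted to grow with $n$, so that the natural small parameter $k^{3/2}\pnl$ is not uniformly small: this requires sharp non-asymptotic bounds on $C(k,\ell)$, and the numerous Stirling/log-Taylor cancellations that conspire to produce exactly $G_\lambda$ must be tracked carefully enough that every discarded contribution falls into one of the five explicit terms of $\Cr{M1err}$.
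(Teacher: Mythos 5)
Your overall architecture is essentially the paper's: the exact identity $\Enl[Y(k)]=\sum_l \binom{n}{k}C(k,k+l)\pnl^{k+l}(1-\pnl)^{\binom{k}{2}-(k+l)+k(n-k)}$, Stirling plus log-Taylor expansions producing $F_\lambda$, a Laplace-type summation over the excess (Wright's coefficients, peak at $l\approx a^3/12$) which converts $F_\lambda$ into $G_\lambda$ and yields the $(8\pi)^{-1/2}$ prefactor, and then Cauchy--Schwarz second-moment bounds to pass from moments to $\Pnl(L_1=k)$. The one genuine gap in your plan is the additive error: you propose to handle $\{L_1>n^{3/4}\}$ by ``a crude union bound'', but that is exactly the regime where the enumerative machinery fails. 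The usable bound $C(k,k+l)\le c(l\vee 1)^{-l/2}k^{k+(3l-1)/2}$ is too lossy once $k^3/n^2$ is large: summing the resulting bound on $\Enl[X(k,k+l)]$ over $l$ costs a factor of order $\exp(k^3/(2en^2))$, which overwhelms the $\exp(-k^3/(6n^2))$ coming from $e^{-F_\lambda}$, so for $k$ well above $n^{3/4}$ you cannot even show the expected number of such components is small, let alone sum up to $k=n$. To obtain anything like $\Cr{A1err}=n^{1/4}e^{-n^{1/4}/80}$ one needs a non-enumerative ingredient: the paper runs an exploration process whose queue increments are dominated by independent binomials, applies an exponential-moment (Chernoff) bound with $\mu=k/(4n)$ to get $\Pnl(|\Cc(v)|>k)\le e^{-k^3/(80n^2)}$ for $\lambda\le n^{1/12}/5$, and only then uses Markov over vertices, $\Pnl(L_1>k)\le \tfrac{n}{k}\Pnl(|\Cc(v)|>k)$. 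Some such stochastic-domination step must be added; a union bound by itself does not supply the exponential rate.

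Your route to (b), summing (a) over sizes, also differs from the paper, which instead introduces $Z(k)$ (the number of components of size in $[k,n^{3/4}]$), computes $\Enl[Z(k)]$ by summing $\Enl[Y(j)]$ via comparison with $\int y^{r}e^{-G_\lambda(y)}dy$ (integration by parts produces the $1/G'_\lambda(a)$ factor), and runs a single second-moment argument for $Z(k)$. Your version is workable but needs two bookkeeping repairs: the relative errors $\Cr{M1err}(j,n,\lambda)$ are not uniformly small up to $j=n^{3/4}$ (e.g.\ $j^4/n^3$ is of order one there), so they must be summed weighted by $e^{-G_\lambda(j/n^{2/3})}$ as the paper does; and the additive $O(\Cr{A1err})$ incurred at each of up to $n^{3/4}$ values of $j$ accumulates to $O(n\,e^{-n^{1/4}/80})$, which is strictly larger than the stated $O(\Cr{A1err})$ and cannot be absorbed into the multiplicative error since $e^{-G_\lambda(a)}$ can itself be as small as $e^{-cn^{1/4}}$ near the top of the allowed range; the single second-moment argument for $Z(k)$ avoids this. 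Finally, a cosmetic point: some attributions of the terms of $\Cr{M1err}$ are off (for instance $n^{2/3}/k$ comes from the Gaussian approximation to the sum over the excess, not from Stirling's $O(1/k)$ remainder), though this does not affect the structure of the argument.
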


Notice in particular that in (b), if $\lambda$ is fixed and $a\to\infty$ then $G'_\lambda(a) = 3a^2/8 + O(a)$, so the correct constant in the denominator of \cite[Proposition 2]{pittel:largest_cpt_rg} should be $(9\pi/8)^{1/2}$ as mentioned above.

Next we give a related result that provides information on the size of the component containing a particular vertex $v$. Let $\Cc(v)$ be the connected component containing $v$, and write $|\Cc(v)|$ for the number of vertices in $\Cc(v)$.

\begin{thm}\label{thm:vLYZ}
Suppose that $|\lambda| \le n^{1/12}$ and $n^{2/3}\le k \le n^{3/4}$. Let $a=k/n^{2/3}$. For any vertex $v$,
\begin{enumerate}[(a)]
\item $\displaystyle \hspace{5mm} \Pnl(|\Cc(v)|=k) = \frac{k^{3/2}}{(8\pi)^{1/2}n^2} e^{-G_\lambda(a)} (1+O(\Cr{M1err}))$,
\item[] \hspace{-7.5mm} and if in addition $\lambda \le \frac15 n^{1/12}$ and $(3\lambda\wedge 1)n^{2/3}\le k$ (as in Theorem \ref{thm:YZ}), then
\item $\displaystyle \hspace{5mm} \Pnl(|\Cc(v)|\ge k) = \frac{a^{3/2}}{(8\pi)^{1/2}n^{1/3} G'_\lambda(a)} e^{-G_\lambda(a)} (1+O(\Cr{M1err})) + O(\Cr{A1err}).$
\end{enumerate}
\end{thm}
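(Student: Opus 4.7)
The plan is to reduce Theorem \ref{thm:vLYZ} to estimates already required in the proof of Theorem \ref{thm:YZ}, using the symmetry identity
\[
\Pnl(|\Cc(v)|=k) \;=\; \frac{k}{n}\,\E[N_k],
\]
where $N_k$ is the number of components of size exactly $k$. This follows from $n\Pnl(|\Cc(v)|=k) = \E\bigl[\sum_w \ind_{|\Cc(w)|=k}\bigr] = k\,\E[N_k]$. The proof of Theorem \ref{thm:YZ}(a) must begin by producing a first-moment estimate
\[
\E[N_k] \;=\; \frac{k^{1/2}}{(8\pi)^{1/2}n}\,e^{-G_\lambda(a)}\bigl(1+O(\Cr{M1err})\bigr);
\]
the additive $\Cr{A1err}$ term in Theorem \ref{thm:YZ}(a) is incurred only when passing from $\E[N_k]$ to $\Pnl(L_1=k)$, to control the probability of having several components of size $k$. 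For part (a) we extract this $\E[N_k]$ asymptotic and multiply by $k/n$, which yields the stated expression with no additive error.

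For part (b), I would start from
\[
\Pnl(|\Cc(v)|\ge k) \;=\; \sum_{j=k}^{n}\Pnl(|\Cc(v)|=j) \;=\; \frac{1}{n}\sum_{j=k}^{n} j\,\E[N_j],
\]
apply part (a) termwise for $k\le j\le n^{3/4}$, and dispose of the tail $j>n^{3/4}$ using $\Pnl(|\Cc(v)|\ge j)\le \Pnl(L_1\ge j)$ together with Theorem \ref{thm:YZ}(b); under the hypotheses the tail is easily absorbed into the $\Cr{A1err}$ term. The principal contribution is
\[
\frac{1}{(8\pi)^{1/2}n^2}\sum_{k\le j\le n^{3/4}} j^{3/2}\,e^{-G_\lambda(j/n^{2/3})}\bigl(1+O(\Cr{M1err}(j,n,\lambda))\bigr),
\]
which I would approximate by the Riemann integral and then change variables $u=j/n^{2/3}$ to obtain $\tfrac{1}{(8\pi)^{1/2}n^{1/3}}\int_a^\infty u^{3/2}e^{-G_\lambda(u)}\,du$.

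The last step is a Laplace-type asymptotic for this integral. Writing the integrand as $\tfrac{u^{3/2}}{G'_\lambda(u)}\cdot G'_\lambda(u)e^{-G_\lambda(u)}$ and integrating by parts produces the boundary term $\tfrac{a^{3/2}}{G'_\lambda(a)}e^{-G_\lambda(a)}$, with a remainder that is $O(1/G'_\lambda(a))$ smaller than the main contribution because $(u^{3/2}/G'_\lambda(u))'$ is of smaller order than $u^{3/2}$ on the range of interest (since $G'_\lambda$ grows like $u^2$ once $u\gtrsim 2\lambda$, which holds throughout our range because of the hypothesis $k\ge (3\lambda\wedge 1)n^{2/3}$). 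Multiplying by the prefactor $\tfrac{1}{(8\pi)^{1/2}n^{1/3}}$ gives exactly the expression claimed in (b).

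Beyond cleanly lifting the $\E[N_k]$ estimate out of the proof of Theorem \ref{thm:YZ}, the main technical obstacle will be bookkeeping of the $j$-dependent multiplicative error $\Cr{M1err}(j,n,\lambda)$ through the sum-to-integral passage and the integration by parts. The key observation is that the rapidly decaying exponential $e^{-G_\lambda(u)}$ concentrates the mass of the integral near the lower endpoint $u=a$, so one must verify, term by term, that the worst-case error over $k\le j\le n^{3/4}$ is absorbed by $\Cr{M1err}(k,n,\lambda)$ — the $k^4/n^3$ and $|\lambda|^3 k/n$ pieces grow with $j$ but are dominated by the exponential damping, while $n^{2/3}/k$ and $1/n^{1/10}$ are already maximised at (or independent of) the endpoint.
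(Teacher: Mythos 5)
Your proposal is correct and follows essentially the same route as the paper: the identity $\Pnl(|\Cc(v)|=k)=\tfrac{k}{n}\Enl[Y(k)]$ (the paper gets it from a direct combinatorial count at the level of $(k,k+l)$-components, Lemma \ref{le:vtoX}, rather than your equivalent exchangeability argument), then Proposition \ref{prop:Y} for part (a), and for part (b) the sum over $k\le j\le n^{3/4}$ converted to an integral and evaluated by the same integration-by-parts Laplace asymptotic (Lemmas \ref{le:Zsumtoint} and \ref{le:Zcalcint} with $r=3/2$), with the tail $j>n^{3/4}$ absorbed into $O(\Cr{A1err})$ via the exploration-process bound. The error bookkeeping you flag is handled in the paper exactly as you suggest, by applying the sum-to-integral lemma with varying exponents $r$.
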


Neither of these results has conditions that are strictly best possible; there is nothing special about $n^{3/4}$, for example, and the results could be extended to larger $k$ at some cost to the allowed values of $\lambda$ and the resulting error terms. There is a delicate balance between $n$, $k$, $\lambda$ and the error terms, and the conditions on Theorems \ref{thm:YZ} and \ref{thm:vLYZ} are just one convenient choice in the trade-off between generality and economy. The same applies to other results throughout the paper.

\subsection{Further results and sketch of proof}

In this section we give an outline of our proofs of Theorems \ref{thm:YZ} and \ref{thm:vLYZ}, including some of the results that we develop along the way and which may be useful in their own right. We begin by letting $X(k,k+l)$ be the number of components of our Erd\H{o}s-R\'enyi graph that have exactly $k$ vertices and $k+l$ edges. At the most basic level, our tactic will be to estimate $\Enl[X(k,k+l)]$ very precisely and then show that $\Enl[X(k,k+l)^2]\approx \Enl[X(k,k+l)]$.

Define
\[F_\lambda(x) = x^3/6 - \lambda x^2/2 + \lambda^2 x/2\]
and let $C(k,k+l)$ be the number of possible connected graphs on $k$ vertices with $k+l$ edges. The first step in our proof is to approximate the expected value of $X(k,k+l)$ in terms of $C(k,k+l)$.

\begin{prop}\label{prop:main}
Suppose that $|\lambda|\le n^{1/12}$, $1\le k\le n^{3/4}$ and $l\le 4 n^{1/4}$. Then
\[\Enl[X(k,k+l)] = \frac{C(k,k+l)}{(2\pi)^{1/2} n^l k^{k+1/2}} e^{-F_\lambda(k/n^{2/3})} \left(1+O(\Cl[error]{EXerr})\right)\]
where
\[\Cr{EXerr} = \Cr{EXerr}(k,n,l,\lambda) = \frac{k^4}{n^3} + \frac{|\lambda l|}{n^{1/3}} + \frac{k|\lambda|^3}{n} + \frac{k}{n} + \frac{1}{k}.\]
Furthermore, there exists a finite constant $c$ such that for $|\lambda|\le n^{1/12}$, $1\le k\le n-1$ and $l\ge -1$,
\[\Enl[X(k,k+l)] \le \frac{c}{k}\left(\frac{k^3}{n^2 (l\vee 1)}\right)^{l/2} e^{-F_\lambda(k/n^{2/3})+\frac13 \lambda^3 k n^{-1}} \left(\frac{1+\lambda n^{-1/3}}{1-2n^{-1}}\right)^l.\]
\end{prop}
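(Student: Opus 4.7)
The plan is to start from the exact identity obtained by linearity of expectation: for a given $k$-vertex subset and a given connected graph on it with $k+l$ edges, the probability that this is the induced subgraph and that no edges lead out of the subset is $p^{k+l}(1-p)^{\binom{k}{2}-(k+l)+k(n-k)}$. Summing over all subsets and all such graphs,
\[ \Enl[X(k,k+l)] = \binom{n}{k}\,C(k,k+l)\,p^{k+l}\,(1-p)^{E},\qquad E := k(n-k)+\binom{k}{2}-(k+l)=kn-\tfrac{k^2}{2}-\tfrac{3k}{2}-l. \]
I would then rewrite $\binom{n}{k} p^{k+l} = \frac{1}{k!\,n^l}\,\bigl(\prod_{j=0}^{k-1}(1-j/n)\bigr)\,(1+\lambda n^{-1/3})^{k+l}$ and apply Stirling's formula, $k!=(2\pi k)^{1/2}(k/e)^k(1+O(1/k))$, to produce the prefactor $C(k,k+l)/((2\pi)^{1/2}n^l k^{k+1/2})$ and an extra factor $e^k$ inside the exponential; the Stirling remainder supplies the $1/k$ term in $\Cr{EXerr}$.

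Next I would Taylor-expand the three nontrivial exponential factors and look for cancellations. From $\log\prod_{j=0}^{k-1}(1-j/n) = -\tfrac{k^2}{2n}+\tfrac{k}{2n}-\tfrac{k^3}{6n^2}+O(k^2/n^2+k^4/n^3)$, from $(k+l)\log(1+\lambda n^{-1/3}) = \lambda k n^{-1/3}+\lambda l n^{-1/3}-\tfrac{\lambda^2 k}{2 n^{2/3}}+\tfrac{\lambda^3 k}{3n}+O(|\lambda|^4 k/n^{4/3}+|\lambda^2 l|/n^{2/3})$, and from $E\log(1-p) = -Ep - Ep^2/2 - Ep^3/3 - \cdots = -k-\lambda k n^{-1/3}+\tfrac{k^2}{2n}+\tfrac{\lambda k^2}{2n^{4/3}}-\tfrac{k}{2n}+O(\Cr{EXerr})$, the terms $k$, $\lambda k n^{-1/3}$, $k^2/(2n)$ cancel cleanly against $+k$ (from $e^k$), $+\lambda k n^{-1/3}$ (from $(1+\lambda n^{-1/3})^k$), and $-k^2/(2n)$ (from $\prod(1-j/n)$) respectively. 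What remains in the exponent is exactly $-k^3/(6n^2) + \lambda k^2/(2n^{4/3}) - \lambda^2 k/(2 n^{2/3}) = -F_\lambda(k/n^{2/3})$, with everything else absorbed into a multiplicative factor $1+O(\Cr{EXerr})$. The restrictions $|\lambda|\le n^{1/12}$, $k\le n^{3/4}$, $l\le 4n^{1/4}$ ensure that each of the error summands in $\Cr{EXerr}$ is at most a fixed small constant, so that $e^{O(\Cr{EXerr})} = 1+O(\Cr{EXerr})$ as required.

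For the uniform upper bound, I would drop the precise tracking and instead apply a Wright-type bound $C(k,k+l)\le c'\, k^{k-1/2}\bigl(k/(l\vee 1)\bigr)^{3l/2}$ valid for all $k\ge 1$ and $l\ge -1$; dividing by $k^{k+1/2}n^l$ produces the $k^{-1}(k^3/(n^2(l\vee 1)))^{l/2}$ factor. The exponential $e^{-F_\lambda(k/n^{2/3})+\lambda^3 k/(3n)}$ is then obtained by keeping the third-order Taylor term $+\tfrac13 \lambda^3 k n^{-1}$ in $(k\log(1+\lambda n^{-1/3}))$ as a positive correction rather than an error. The factor $((1+\lambda n^{-1/3})/(1-2n^{-1}))^l$ arises from using $p/(1-p) \le (1+\lambda n^{-1/3})/((1-2n^{-1})n)$ to handle the $p^{k+l}$ contribution once the main $p^k$ piece has been paired off, combined with $(1-p)^E \le e^{-pE}$ to deal with the non-edges; replacing $1-p$ by $1-2n^{-1}$ in an analogous way for the denominator accommodates the full range $|\lambda|\le n^{1/12}$ and $k\le n-1$ while keeping the bound finite.

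The main obstacle is simply the bookkeeping: because the exponent contains terms as large as $O(n^{1/4})$ (for instance $\lambda^2 k/n^{2/3}$ when both $\lambda$ and $k$ are near their maxima), each of the three Taylor expansions has to be carried to the order at which the remainder is genuinely $O(\Cr{EXerr})$, and the cancellations among them must be verified exactly before exponentiating. Getting the lower-order terms ($k/n$, $k/n^2$ and so on) correctly absorbed into $\Cr{EXerr}$ rather than into the leading $e^{-F_\lambda}$ is the delicate part, and any coarser expansion would introduce a multiplicative factor that blows up in the large-$|\lambda|$ regime.
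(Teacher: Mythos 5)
Your proposal follows essentially the same route as the paper: the exact identity for $\Enl[X(k,k+l)]$, Stirling's formula, termwise expansion of the logarithms of the three factors with the cancellations leaving exactly $-F_\lambda(k/n^{2/3})$, and, for the uniform bound, a Wright--Bollob\'as bound on $C(k,k+l)$ combined with one-sided versions of the same elementary estimates (retaining $+\tfrac13\lambda^3 k n^{-1}$ and the $l$-dependent factors $(1+\lambda n^{-1/3})^l(1-2/n)^{-l}$). One correction: the combinatorial bound you quote, $C(k,k+l)\le c'k^{k-1/2}\bigl(k/(l\vee1)\bigr)^{3l/2}$, is false for large $l$, since $C(k,k+l)\asymp\gamma_l k^{k+(3l-1)/2}$ with $\gamma_l\asymp(e/(12l))^{l/2}$, so the true count exceeds your bound by a factor of order $(el^2/12)^{l/2}$; the correct statement (Bollob\'as's Corollary 5.21, and evidently what your subsequent division by $k^{k+1/2}n^l$ presumes, since it yields the claimed $k^{-1}\bigl(k^3/(n^2(l\vee1))\bigr)^{l/2}$) is $C(k,k+l)\le c(l\vee1)^{-l/2}k^{k+(3l-1)/2}=ck^{k-1/2}\bigl(k^3/(l\vee1)\bigr)^{l/2}$. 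You should also record the one-line check that the leftover $O(l/n)$ contribution from the non-edge factor is genuinely absorbed: $l/n\le 4n^{-3/4}$ while $k^4/n^3+1/k$ is at least of order $n^{-3/5}$ for every $k$, since $l/n$ is not itself one of the summands of the stated error term.
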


We will prove Proposition \ref{prop:main} in Section \ref{sec:Xkl}. The quantity $C(k,k+l)$ is interesting in its own right, and has already been the subject of intensive study. See Janson \cite{janson:brownian_wright} for a very useful survey. In Section \ref{sec:wright} we recall and adapt results on the asymptotics of $C(k,k+l)$ which combine with Proposition \ref{prop:main} to give us detailed bounds on $\Enl[X(k,k+l)]$.

We then let $Y(k)$ be the number of components of size exactly $k$, and for $k\le n^{3/4}$, let $Z(k)$ the number of components of size between $k$ and $n^{3/4}$. By summing $X(k,k+l)$ over $l$, in Section \ref{sec:more1mom} we get first moment estimates on $Y(k)$, and then by summing $Y(j)$ over $j$ from $k$ to $n^{3/4}$ we also obtain first moment estimates on $Z(k)$. We could of course sum all the way up to $j=n$, but our bounds are not accurate past a certain point and $n^{3/4}$ is a convenient place to stop. The resulting estimates are below.

\begin{prop}\label{prop:Y}
Suppose that $|\lambda|\le n^{1/12}$ and $n^{2/3}\le k\le n^{3/4}$. Then
\[\Enl[Y(k)] = \frac{k^{1/2}}{(8\pi)^{1/2}n} e^{-G_\lambda(k/n^{2/3})} (1+O(\Cr{M1err})).\]
\end{prop}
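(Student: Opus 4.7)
Since $Y(k)=\sum_{l\ge -1} X(k,k+l)$, the starting point is $\Enl[Y(k)] = \sum_{l\ge -1}\Enl[X(k,k+l)]$, and the whole proposition reduces to feeding in Proposition~\ref{prop:main} and summing carefully over $l$. The sharp estimate in Proposition~\ref{prop:main} only holds for $l\le 4n^{1/4}$, so I would split the sum at the cutoff $L:=\lfloor 4n^{1/4}\rfloor$ and treat the two ranges separately.

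For the tail $l>L$ I would use the second (crude) bound in Proposition~\ref{prop:main}. Since $k\le n^{3/4}$ forces $k^3/n^2\le n^{1/4}\le L/4$, the factor $(k^3/(n^2 l))^{l/2}$ is bounded by $4^{-l/2}$ on this range. Combined with the factor $((1+\lambda n^{-1/3})/(1-2/n))^l\le e^{O(l/n^{1/4})}$, harmless for $|\lambda|\le n^{1/12}$, the sum over $l>L$ is a geometric series of order $\exp(-c n^{1/4})$ times the common factor $e^{-F_\lambda(a)+\frac13\lambda^3 k/n}/k$. Using $F_\lambda(a)-G_\lambda(a)=a^3/24\le n^{1/4}/24$, the tail is smaller than any negative power of $n$ relative to the target main term, and is absorbed into $\Cr{M1err}$.

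For the main range $-1\le l\le L$ I would substitute Proposition~\ref{prop:main}'s sharp estimate together with the refined asymptotics for $C(k,k+l)$ developed in Section~\ref{sec:wright}. After these substitutions $\Enl[X(k,k+l)]$ factorises as $\frac{\hat d_l(k)}{\sqrt{2\pi}\,k^{3/2}}(k^{3/2}/n)^l\,e^{-F_\lambda(a)}(1+O(\Cr{EXerr}))$, where $\hat d_l(k)$ is a uniform Wright-type coefficient agreeing with Wright's constant $d_l$ for bounded $l$ but controlled up to $l\le L$. Summing---equivalently, a Laplace/saddle-point evaluation centred near $l^\ast\asymp c\,k^3/n^2$---is designed to deliver exactly the prefactor $k^{1/2}/(\sqrt{8\pi}\,n)$ and to upgrade the exponent $F_\lambda(a)$ to $G_\lambda(a)$, the difference being precisely $a^3/24$.

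\emph{Main obstacle and error tracking.} The principal difficulty is the sum evaluation itself: isolating the correct leading constant $1/\sqrt{8\pi}$ and identifying the replacement $F_\lambda\mapsto G_\lambda$. This requires a sufficiently uniform version of Wright's formula, valid for $l$ up to $\sim n^{1/4}$ (expected to come from Section~\ref{sec:wright}, ultimately tied to the generating-function identities for Wright's constants and the area of a Brownian excursion \cite{janson:brownian_wright}), together with a saddle-point argument producing the extra exponential $e^{a^3/24}$. Error bookkeeping is the second delicate point: the $l$-dependent summand error $|\lambda l|/n^{1/3}$ in $\Cr{EXerr}$ sums to $|\lambda|L/n^{1/3}=O(|\lambda|/n^{1/12})$, exactly matching the $|\lambda|/n^{1/12}$ contribution to $\Cr{M1err}$, while the remaining pieces---$k^4/n^3$, $k|\lambda|^3/n$, $k/n\le n^{-1/4}$, and $1/k\le n^{-2/3}$---are either already present in $\Cr{M1err}$ or dominated by $1/n^{1/10}$.
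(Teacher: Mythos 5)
Your proposal is correct and follows essentially the same route as the paper: split $\sum_{l\ge -1}\Enl[X(k,k+l)]$ at $L=\lfloor 4n^{1/4}\rfloor$, kill the tail with the crude second bound of Proposition \ref{prop:main} (this is the paper's Lemma \ref{le:lbig}), and on the main range combine the sharp estimate with the uniform Wright asymptotics and a Laplace-type evaluation of $\sum_{l}C(k,k+l)/n^l$ peaked near $l\approx k^3/(12n^2)$ (the paper's Lemma \ref{le:lsumasymp}), which yields the factor $e^{k^3/(24n^2)}$ upgrading $F_\lambda$ to $G_\lambda$ and the constant $(8\pi)^{-1/2}$. Your error bookkeeping (uniform bound $|\lambda|L/n^{1/3}=O(|\lambda|/n^{1/12})$, remaining terms absorbed into the other pieces of the stated error) likewise matches the paper's treatment.
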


\begin{prop}\label{prop:Z}
Suppose that $|\lambda|\le n^{1/12}$ and $(3\lambda\vee 1) n^{2/3}\le k\le n^{3/4}$. Then
\[\Enl[Z(k)] = \frac{1}{(8\pi)^{1/2}} \frac{(k/n^{2/3})^{1/2}}{G'_\lambda(k/n^{2/3})} e^{-G_\lambda(k/n^{2/3})}(1+O(\Cr{M1err})) + O(\Cr{A1err}).\]
\end{prop}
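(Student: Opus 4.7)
The plan is to apply Proposition~\ref{prop:Y} termwise in $\Enl[Z(k)]=\sum_{j=k}^{\lfloor n^{3/4}\rfloor}\Enl[Y(j)]$, and then evaluate the resulting sum asymptotically. After substitution the main contribution is $\frac{1}{(8\pi)^{1/2}n}\sum_{j=k}^{\lfloor n^{3/4}\rfloor} j^{1/2} e^{-G_\lambda(j/n^{2/3})}$. The hypothesis $(3\lambda\vee 1)n^{2/3}\le k$ translates to $a:=k/n^{2/3}\ge 3\lambda\vee 1$; using $G_\lambda(u)=u(u-2\lambda)^2/8$, one easily checks $u-2\lambda\ge u/3$, hence $G'_\lambda(u)\ge 7u^2/72$ and $G_\lambda(u)\ge u^3/72$, on the entire range of summation. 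The summand thus decays super-exponentially, so it can be replaced by the corresponding integral (the Riemann-sum error is absorbed into $\Cr{M1err}$); the change of variables $u=j/n^{2/3}$ reduces the main term to $\frac{1}{(8\pi)^{1/2}}\int_a^{n^{1/12}} u^{1/2}e^{-G_\lambda(u)}\,du$.

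This integral is evaluated by integration by parts, writing $u^{1/2} e^{-G_\lambda(u)}=-\frac{u^{1/2}}{G'_\lambda(u)}\bigl(e^{-G_\lambda(u)}\bigr)'$, which yields the leading term $\frac{a^{1/2}}{G'_\lambda(a)} e^{-G_\lambda(a)}$ plus a remainder involving $(u^{1/2}/G'_\lambda(u))'$. A second integration by parts shows the remainder is $O(1/a^3)$ times the main term; since $a\ge 1$, we have $1/a^3=n^2/k^3\le n^{2/3}/k$, which is subsumed by $\Cr{M1err}$. The tail $\int_{n^{1/12}}^\infty u^{1/2} e^{-G_\lambda(u)}\,du$ is bounded using $G_\lambda(u)\ge u^3/72$ by a constant times $n^{1/4}e^{-n^{1/4}/80}$, which fits inside $\Cr{A1err}$.

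The main obstacle is propagating the multiplicative error $(1+O(\Cr{M1err}(j,n,\lambda)))$ from Proposition~\ref{prop:Y} uniformly through the sum, since the terms $j^4/n^3$ and $|\lambda|^3 j/n$ in $\Cr{M1err}$ grow in $j$. To handle this I would apply the same Laplace/integration-by-parts argument to integrals $\int_a^\infty u^{1/2+r} e^{-G_\lambda(u)}\,du$, obtaining $\frac{a^{1/2+r}}{G'_\lambda(a)}e^{-G_\lambda(a)}$ up to lower-order corrections. A direct computation then shows that the contribution of the $j^4/n^3$ term in $\Cr{M1err}(j,n,\lambda)$, after summing and changing variables, gains an extra factor of $a^4 n^{-1/3} = k^4/n^3$ relative to the main term, and similarly the $|\lambda|^3 j/n$ term picks up $|\lambda|^3 a n^{-1/3} = |\lambda|^3 k/n$; both are precisely the corresponding terms in $\Cr{M1err}(k,n,\lambda)$. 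The remaining terms in $\Cr{M1err}$ are either constant or decreasing in $j$ and thus pose no difficulty.
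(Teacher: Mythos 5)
Your proposal is correct and follows essentially the same route as the paper: termwise application of Proposition \ref{prop:Y}, comparison of the sum with an integral, a Laplace-type integration by parts giving $\frac{a^{r}}{G'_\lambda(a)}e^{-G_\lambda(a)}(1+O(a^{-3}))$, and absorption of the upper-endpoint/tail contribution into $\Cr{A1err}$ via $G_\lambda(n^{1/12})\gtrsim n^{1/4}$. The paper packages these steps as Lemmas \ref{le:Zcalcint} and \ref{le:Zsumtoint}, stated for a general exponent $r$ exactly so that the $j$-dependent error terms in $\Cr{M1err}(j,n,\lambda)$ can be summed as you describe.
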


In Section \ref{sec:2mom} we develop second moment bounds on $X(k,k+l)$, $Y(k)$ and $Z(k)$, which follow relatively easily from our first moment calculations thanks to the large amount of independence inherent in Erd\H{o}s-R\'enyi graphs. The final key tool is then developed in Section \ref{sec:explore}: since our combinatorial bounds are weak when the size of the component is very large, we instead use an exploration process as in \cite{nachmias_peres:CRG_mgs} to bound the probability that $L_1>k$, which we will apply with $k = n^{3/4}$.

\begin{lem}\label{le:explore}
If $\lambda \le n^{1/12}/5$, then
\[\Pnl(L_1> k) \le n^{1/4}\exp\Big(-\frac{k^3}{80 n^2}\Big).\]
\end{lem}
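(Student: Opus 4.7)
The plan is to use an exploration-process argument in the style of Nachmias and Peres~\cite{nachmias_peres:CRG_mgs}, combined with a first-moment reduction. First I would observe that $\{L_1>k\}$ forces at least $k+1$ vertices to lie in components of size greater than $k$, so Markov applied to $\sum_v\ind_{|\Cc(v)|>k}$ gives
\[\Pnl(L_1>k)\le\frac{n}{k+1}\Pnl(|\Cc(v)|>k)\]
for any fixed vertex $v$, reducing the problem to a tail estimate on a single component.

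Next I would explore $\Cc(v)$ one vertex at a time. Let $\eta_s$ be the number of new vertices discovered at step $s$, $A_s=A_{s-1}-1+\eta_s$ (with $A_0=1$) the number of active vertices, and $U_s=n-1-\sum_{i\le s}\eta_i$ the number of still-unexamined vertices. Conditional on the history, $\eta_s\sim\mathrm{Bin}(U_{s-1},\pnl)$, and whenever $A_{s-1}\ge 1$ the exploration has uncovered at least $s$ vertices by time $s-1$, so $U_{s-1}\le n-s$. Via the standard coupling to independent $X_s\sim\mathrm{Bin}(n-s,\pnl)$ built from fresh Bernoulli$(\pnl)$ trials, one has $\eta_s\le X_s$ on $\{A_{s-1}\ge 1\}$. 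Since $\{|\Cc(v)|>k\}\subseteq\{\sum_{s=1}^k\eta_s\ge k\}$, this yields
\[\Pnl(|\Cc(v)|>k)\le\Pnl\bigl(\mathrm{Bin}(M,\pnl)\ge k\bigr),\qquad M=kn-\tfrac12 k(k+1).\]

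For the final step I would apply a Chernoff bound to this binomial upper tail. Expanding, $\mu:=M\pnl=k+k\lambda n^{-1/3}-\tfrac{k(k+1)}{2n}+O(\lambda k^2 n^{-4/3})$, and the hypothesis $\lambda\le n^{1/12}/5$ ensures that the positive $k\lambda n^{-1/3}$ term is dominated by the negative $k(k+1)/(2n)$ term in the range where the target bound is non-trivial, so $k-\mu\gtrsim k^2/n$. Inserting this into Bernstein's inequality $\Pr(\mathrm{Bin}(M,\pnl)\ge\mu+t)\le\exp(-t^2/(2\mu+2t/3))$ produces an exponential bound of the form $\exp(-k^3/(cn^2))$ for some $c$ comfortably less than $80$. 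Combined with the Markov step — using $n/(k+1)\le n^{1/4}$ in the intended application $k\ge n^{3/4}$, the bound being vacuous otherwise — this gives the claim.

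The main delicacy will be pinning down the constant $80$: one must balance the Markov prefactor $n/(k+1)$, which may exceed $n^{1/4}$ when $k<n^{3/4}$, against the savings from the Chernoff exponent, and verify uniformly in $\lambda\le n^{1/12}/5$ that the constants come out correctly. The strict inequality in the hypothesis leaves a definite gap in the estimation of $k-\mu$ that propagates cleanly to the final bound.
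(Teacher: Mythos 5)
Your proposal is correct and follows essentially the same route as the paper: a Markov bound over the $n$ vertices reduces to a tail estimate for $|\Cc(v)|$, the exploration process is dominated by independent $\mathrm{Bin}(n-s,\pnl)$ variables (whose sum is exactly your $\mathrm{Bin}(M,\pnl)$, so your Bernstein step is the paper's exponential-moment bound with $\mu=k/(4n)$ packaged as a named inequality), and the resulting constant is indeed comfortably below $80$. One caveat: your claim that $k\lambda n^{-1/3}$ is dominated by $k(k+1)/(2n)$ ``in the range where the target bound is non-trivial'' actually requires $k\ge n^{3/4}$ when $\lambda$ is as large as $n^{1/12}/5$ (for $n^{2/3}\log^{1/3}n\lesssim k\ll \lambda n^{2/3}$ the bound is non-vacuous yet the drift is the wrong way) --- but you invoke $k\ge n^{3/4}$ anyway for the prefactor, which is exactly the restriction the paper's own proof makes and the only regime in which the lemma is applied.
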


This result is designed to be easy to prove rather than accurate, and could be improved by following more closely the argument in \cite{nachmias_peres:CRG_mgs}.

We put the pieces together in Section \ref{sec:momtoprob} by using Markov's inequality and the Cauchy-Schwarz inequality to turn our moment estimates on $Y(k)$ into bounds on the probability that $L_1$ has size exactly $k$; and also to turn our moment estimates on $Z(k)$, together with Lemma \ref{le:explore}, into bounds on the probability that $L_1$ has size at least $k$. These together prove Theorem \ref{thm:YZ}.

In order to prove Theorem \ref{thm:vLYZ} in Section \ref{sec:add}, rather than using second moment bounds, we express the probability that the component containing the vertex $v$ has exactly $k$ vertices and $k+l$ edges directly in terms of $\E[X(k,k+l)]$. We can then apply our first moment calculations to give asymptotics for this probability. Write $|\Cc(v)|$ for the number of vertices in $\Cc(v)$ and $E(\Cc(v))$ for the number of edges in $\Cc(v)$.

\begin{prop}\label{prop:vl}
Suppose that $|\lambda|\le n^{1/12}$, $1\le k\le n^{3/4}$ and $-1\le l\le 4k^{1/2}\wedge 4n^{1/4}$. Then
\begin{align*}
&\Pnl\big(|\Cc(v)|=k,\, E(\Cc(v))=k+l\big)\\
&\hspace{25mm}= \frac{\gamma_l k^{3l/2}}{(2\pi)^{1/2}n^{l+1}} e^{-F_\lambda(k/n^{2/3})}(1+O(\Cr{EXerr} + \tfrac{l^2}{k} + \tfrac{(l+1)^{1/16}}{k^{9/50}}))\\
&\hspace{25mm}= \Big(\frac{3}{8\pi}\Big)^{1/2} \frac{1}{n} \Big(\frac{ek^3}{12n^2(l\vee 1)}\Big)^{l/2} e^{-F_\lambda(k/n^{2/3})} (1+O(\Cr{EXerr} + \tfrac{l^2}{k} + \tfrac{(l+1)^{1/16}}{k^{9/50}} +\tfrac{1}{l})).
\end{align*}
\end{prop}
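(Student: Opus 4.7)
The plan is to reduce the joint probability in question to the already-computed first moment $\Enl[X(k,k+l)]$ via an elementary counting identity, and then apply the asymptotics for Wright's constants developed in Section \ref{sec:wright}.

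First I would fix any subset $S \subseteq \{1,\ldots,n\}$ containing $v$ with $|S|=k$. The event $\{\Cc(v)=S,\, E(\Cc(v))=k+l\}$ decomposes into two independent requirements: the induced graph on $S$ is one of the $C(k,k+l)$ connected graphs with $k+l$ edges (contributing a factor $\pnl^{k+l}(1-\pnl)^{\binom{k}{2}-k-l}$), and none of the $k(n-k)$ potential edges between $S$ and $S^c$ is present (contributing $(1-\pnl)^{k(n-k)}$). Summing this probability over the $\binom{n-1}{k-1}$ subsets $S$ containing $v$, and comparing with
$$\Enl[X(k,k+l)] = \binom{n}{k}\,C(k,k+l)\,\pnl^{k+l}(1-\pnl)^{\binom{k}{2}-k-l+k(n-k)}$$
(which is the same sum with $v$ no longer distinguished), I would read off the clean identity
$$\Pnl\bigl(|\Cc(v)|=k,\,E(\Cc(v))=k+l\bigr) = \frac{k}{n}\,\Enl[X(k,k+l)].$$

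With this in hand, Proposition \ref{prop:main} gives the first claimed expression directly, except that the conclusion is phrased in terms of $\gamma_l$ rather than $C(k,k+l)$. To bridge the gap I would invoke a Wright-type asymptotic of the form $C(k,k+l)=\gamma_l k^{k+(3l-1)/2}\bigl(1+O(l^2/k + (l+1)^{1/16}/k^{9/50})\bigr)$, the main content of Section \ref{sec:wright}; this is exactly what absorbs the two additional error terms appearing in the proposition. A direct calculation, using $k\cdot k^{k+(3l-1)/2}/k^{k+1/2}=k^{3l/2}$, converts $\frac{k}{n}\Enl[X(k,k+l)]$ into $\gamma_l k^{3l/2}/((2\pi)^{1/2}n^{l+1})\cdot e^{-F_\lambda(k/n^{2/3})}$ times the claimed error.

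For the second equality I would then apply the tail asymptotic $\gamma_l=(3/4)^{1/2}(e/(12l))^{l/2}(1+O(1/l))$ of Wright's constants as $l\to\infty$ (also available from Section \ref{sec:wright}), giving the $(ek^3/(12n^2(l\vee 1)))^{l/2}$ form at the cost of the additional $O(1/l)$ in the error term, while the constants combine via $(3/4)^{1/2}/(2\pi)^{1/2}=(3/(8\pi))^{1/2}$. There is essentially no novel obstacle here: the combinatorial reduction to $\Enl[X(k,k+l)]$ is immediate, and the analytic heavy lifting lives in Proposition \ref{prop:main} and in the asymptotic analysis of $C(k,k+l)$. The mild subtlety is to confirm that the $l\vee 1$ convention correctly handles $l\in\{-1,0\}$ (where $\gamma_{-1}=1$ and $\gamma_0$ is just Wright's tree/unicyclic constant, so the $O(1/l)$ step is vacuous or unneeded and the statement reduces to the first equality). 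The reason we can avoid the $\Cr{A1err}$ term appearing in Theorem \ref{thm:YZ} is that the event $|\Cc(v)|=k$ is already confined to $k\le n^{3/4}$, so no exploration argument in the spirit of Lemma \ref{le:explore} is needed to discard the atypical upper tail.
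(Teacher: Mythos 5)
Your argument is essentially the paper's own proof: your counting identity is exactly Lemma \ref{le:vtoX} (obtained by the same comparison of $\binom{n-1}{k-1}$ with $\binom{n}{k}$), and combining it with Proposition \ref{prop:main}, the Wright asymptotic \eqref{eq:C} and the asymptotic for $\gamma_l$ is precisely what the paper packages as Corollary \ref{cor:EX}, so the structure is correct and complete, including your observation that no $\Cr{A1err}$ term is needed. One caveat on the second display: you quote $\gamma_l=(3/4)^{1/2}(e/(12l))^{l/2}(1+O(1/l))$, whereas the paper's \eqref{eq:gammaasymp} (via $d_l\to 1/(2\pi)$ and Stirling) gives the constant $(3/(4\pi))^{1/2}$; with that value the prefactor in the second line would come out as $(3/(8\pi^2))^{1/2}$ rather than the printed $(3/(8\pi))^{1/2}$, so your constant matches the statement only because it is $\sqrt{\pi}$ larger than the paper's --- the discrepancy appears to lie in the displayed constant of Corollary \ref{cor:EX} and Proposition \ref{prop:vl} rather than in your argument, but as written your $\gamma_l$ asymptotic is not the one established in Section \ref{sec:wright} and should be corrected or the constant mismatch flagged.
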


Summing over $l$ and then $k$, we get the bounds needed for Theorem \ref{thm:vLYZ}. We also give a possibly useful bound on $\P(|\Cc(v)|=k)$ for small $k$.

\begin{prop}\label{prop:vsmallk}
Suppose that $|\lambda|\le n^{1/12}$. For any $M>0$, there exist constants $0<c_1\le c_2<\infty$ depending on $M$ such that
\[c_1 k^{-3/2} e^{-F_\lambda(k/n^{2/3})} \le \Pnl(|\Cc(v)|=k) \le c_2 k^{-3/2} e^{-F_\lambda(k/n^{2/3})}\hspace{5mm} \forall k\le Mn^{2/3}. \]
\end{prop}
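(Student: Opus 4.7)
The starting point is the exchangeability identity $\Pnl(|\Cc(v)|=k) = (k/n)\,\Enl[Y(k)]$, which follows by counting vertex-component incidences: summing the indicator $\ind\{v\in C\}$ over all components $C$ of size exactly $k$ equals $\ind\{|\Cc(v)|=k\}$, and by symmetry over $v$ the expectation of the left side is $k/n$ times the expected number of such components. Thus
\[
\Pnl(|\Cc(v)|=k) \;=\; \frac{k}{n}\sum_{l\ge -1}\Enl[X(k,k+l)].
\]
The plan is to sum the rough bound in Proposition \ref{prop:main} for the upper estimate, and to keep only the tree term $l=-1$ (applying Proposition \ref{prop:vl}) for the lower estimate.

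For the upper bound, apply the second bound of Proposition \ref{prop:main} termwise. For $k\le Mn^{2/3}$ and $|\lambda|\le n^{1/12}$ one has $k^3/n^2\le M^3$, and both $e^{\lambda^3 k/(3n)}$ and $(1+\lambda n^{-1/3})^2/(1-2n^{-1})^2$ are bounded by constants depending only on $M$ (the latter by $2$ once $n$ is large). Therefore the $l=-1$ term contributes $O(n/k^{3/2})$ and for $l\ge 1$ the combined geometric factor is dominated by $(2M^3/l)^{l/2}$, whose sum over $l\ge 1$ converges to some finite $C(M)$. Putting these together gives $\Enl[Y(k)]\le C'(M)\,k^{-1}(n/k^{3/2}+1)\,e^{-F_\lambda(k/n^{2/3})}$, so
\[
\Pnl(|\Cc(v)|=k) \;\le\; C'(M)\big(k^{-3/2} + 1/n\big)\,e^{-F_\lambda(k/n^{2/3})}.
\]
Since $1/n \le M^{3/2} k^{-3/2}$ whenever $k \le Mn^{2/3}$, this yields the claimed bound with $c_2 = C'(M)(1+M^{3/2})$.

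For the lower bound, choose $K_0=K_0(M)$ large enough that for every $k\ge K_0$ with $k\le Mn^{2/3}$ and $|\lambda|\le n^{1/12}$, the error $\Cr{EXerr}(k,n,-1,\lambda)+1/k$ appearing in the $l=-1$ case of Proposition \ref{prop:vl} is at most $1/2$; under the stated constraints each of the five contributions $k^4/n^3,\ |\lambda|/n^{1/3},\ k|\lambda|^3/n,\ k/n,\ 1/k$ is indeed small. For such $k$, Proposition \ref{prop:vl} with $l=-1$ yields
\[
\Pnl(|\Cc(v)|=k) \;\ge\; \Pnl(|\Cc(v)|=k,\,E(\Cc(v))=k-1) \;\ge\; \frac{\gamma_{-1}}{2(2\pi)^{1/2}}\,k^{-3/2}\,e^{-F_\lambda(k/n^{2/3})}.
\]
For the finitely many values $k<K_0$, use the explicit tree lower bound $\Pnl(|\Cc(v)|=k) \ge \binom{n-1}{k-1} k^{k-2} \pnl^{k-1}(1-\pnl)^{\binom{k}{2}-k+1+k(n-k)}$; uniformly in $|\lambda|\le n^{1/12}$ this tends to the positive constant $k^{k-2}e^{-k}/(k-1)!$ as $n\to\infty$, while $e^{-F_\lambda(k/n^{2/3})}\to 1$ uniformly in the same regime. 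Taking the minimum of the resulting ratios $\Pnl(|\Cc(v)|=k)/(k^{-3/2}e^{-F_\lambda(k/n^{2/3})})$ over the finite set $\{1,\dots,K_0-1\}$ produces the lower bound for $n$ sufficiently large; the finitely many small values of $n$ are absorbed by lowering $c_1$.

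The main obstacle is the bookkeeping in the upper-bound sum, specifically checking uniformly over $k\le Mn^{2/3}$ and $|\lambda|\le n^{1/12}$ that the increasing factor $((1+\lambda n^{-1/3})/(1-2n^{-1}))^l$ is dominated by the super-exponential decay of $(k^3/(n^2 l))^{l/2}$; the crude estimate $((1+\lambda n^{-1/3})/(1-2n^{-1}))^2\le 2$ is what saves the day, but care is needed to fold the exponential prefactor $e^{\lambda^3 k/(3n)}$ into the $M$-dependent constant.
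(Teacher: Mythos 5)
Your argument is correct, but it is organised differently from the paper's proof, so a comparison is worth recording. The paper also starts from $\Pnl(|\Cc(v)|=k)=\frac{k}{n}\sum_{l\ge-1}\Enl[X(k,k+l)]$ (its Lemma \ref{le:vtoX}; your exchangeability derivation is the same identity), but then truncates the sum at $L=\lfloor 4n^{1/4}\rfloor$, applies the \emph{precise} first part of Proposition \ref{prop:main} to the range $l\le L$ with the error $\Cr{EXerr}$ merely bounded by a constant, and invokes Corollary \ref{cor:smallk} to get the two-sided estimate $\sum_{l\le L}C(k,k+l)/n^l\asymp k^{k-2}n$, finishing with the crude second part of Proposition \ref{prop:main} for the tail $l>L$. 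You instead get the upper bound by summing the crude bound over \emph{all} $l\ge-1$, exploiting $k^3/n^2\le M^3$ so that the tree term dominates and $\sum_{l\ge1}(2M^3/l)^{l/2}$ is an $M$-dependent constant, and you get the lower bound from the single $l=-1$ term, via Proposition \ref{prop:vl} for $k\ge K_0(M)$ and an explicit Cayley-tree computation for the finitely many $k<K_0$. What your route buys is that it bypasses Corollary \ref{cor:smallk} and the $4n^{1/4}$ truncation entirely, and it is more scrupulous about positivity: the paper's step ``since $\Cr{EXerr}$ is at most a constant'' only directly controls the size of the factors $1+O(\Cr{EXerr})$, whereas your $K_0(M)$ split makes the lower bound genuinely uniform down to $k=1$. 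What the paper's route buys is uniformity in one sweep (no case split in $k$, no separate small-$k$ computation), since the two-sided combinatorial bound of Corollary \ref{cor:smallk} does the work for every $k\le Mn^{2/3}$ at once.

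Two pieces of bookkeeping in your write-up deserve a sentence each, though neither is a real gap. First, your upper bound uses $\bigl((1+\lambda n^{-1/3})/(1-2n^{-1})\bigr)^2\le 2$ and (implicitly) $Mn^{2/3}\le n-1$ for the second part of Proposition \ref{prop:main}, both of which hold only for $n\ge n_0(M)$; the finitely many smaller $n$ must be absorbed into $c_2$ exactly as you absorb them into $c_1$, using $\Pnl(|\Cc(v)|=k)\le 1$ and the fact that $k^{-3/2}e^{-F_\lambda(k/n^{2/3})}$ is bounded below on this finite range. Second, Proposition \ref{prop:vl} also requires $k\le n^{3/4}$, which is not automatic from $k\le Mn^{2/3}$ alone; it does follow from your own device provided you take $K_0\ge M^9$, since then $k\ge K_0$ and $k\le Mn^{2/3}$ force $n\ge M^{12}$ and hence $Mn^{2/3}\le n^{3/4}$. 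With these one-line additions the proof is complete.
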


\subsection{Related work}

There is a rich literature on Erd\H{o}s-R\'enyi random graphs and related models, beginning with the work of Erd\H{o}s and R\'enyi themselves \cite{erdos_renyi:RG1, erdos_renyi:evolution_RG, erdos_renyi:evolution_RG2}. A good introduction to the general area is provided by the three books \cite{bollobas_book, durrett2007random, janson_et_al:random_graphs}. More directly relevant to this work, besides the articles by \L{}uczak, Pittel and Wierman \cite{luczak_et_al:structure_RG} and Pittel \cite{pittel:largest_cpt_rg}, is a paper by van der Hofstad, Kager and M\"uller \cite{hofstad_et_al:local_limit_CRG} which gives a local limit theorem for the size of the $k$ largest components for arbitrary $k$.

Nachmias and Peres \cite{nachmias_peres:CRG_mgs} give bounds on $\P(L_1\ge an^{2/3})$ via martingale arguments, valid---at least when $\lambda=0$---for any $n>1000$ and $a>8$. Theoretically it should also be possible to extract concrete bounds (that is, specific error bounds and a value of $N$ such that the bounds hold for all $n\ge N$) from our proofs, but we felt it best not to include these in order to keep the article to a reasonable length.

We use combinatorial methods to estimate the distribution of component sizes. Another approach could be to exploit the link between Erd\H{o}s-R\'enyi random graphs and excursions of Brownian motion with parabolic drift, first identified by Aldous \cite{aldous:critical_random_graphs}. Aldous used a breadth-first walk to explore the graph, and showed that the sizes of the largest components, when rescaled by $n^{2/3}$, converge in an appropriate sense to some limit, which he described in detail. This link has since been built upon in various ways. For example, a sharpening of Pittel's result was obtained by van der Hofstad, Janssen and van Leeuwaarden \cite{hofstad_et_al:critical_epidemics}, who also used the same tools to investigate critical SIR epidemics. Addario-Berry, Broutin and Goldschmidt \cite{broutin_et_al:continuum_limit_critical_rgs} showed that in fact the components themselves (rather than just their sizes) converge, when rescaled, to metric spaces characterized by excursions of Brownian motion with parabolic drift decorated by a Poisson point process; they then used this relationship to give various distributional properties of the components \cite{addario_berry_et_al:distn_CRG}.

There are many other random graphs that have similar properties to the near-critical Erd\H{o}s-R\'enyi graph. There is a whole class of models whose component sizes, when suitably rescaled within a critical regime, converge to the lengths of excursions of Brownian motion with parabolic drift just as for the Erd\H{o}s-R\'enyi graph. Some examples include inhomogeneous random graphs \cite{bhamidi_et_al:scaling_inhom_RGs}, the configuration model \cite{dhara_et_al:critical_window_config, joseph:components_critical_RG, riordan:phase_transition_config}, the hierarchical configuration model \cite{hofstad_et_al:mesoscopic_hierarchical}, the quantum random graph \cite{dembo_et_al:component_sizes_quantum_RG} and the $\Delta_{(i)}/G/1$ queue \cite{bet_et_al:heavy_traffic_queues}. This has led to predictions of universality, and it would be very interesting to know whether large deviation results in the style of our Theorems \ref{thm:YZ} and \ref{thm:vLYZ} hold for these related models.

In another direction, O'Connell \cite{oconnell:LDs_RGs} gave a large deviations rate function for the size of the giant component divided by $n$, valid for the Erd\H{o}s-R\'enyi graph $G(n,p)$ with $p=c/n$ for any $c>0$. Bollob\'as and Riordan \cite{bollobas_riordan:asymptotic_normality_RG} showed that the size of the giant component in the regime $p=(1+\eps_n)/n$ with $\eps_n\to 0$ and $\eps_n^3 n\to\infty$ is asymptotically normally distributed when rescaled appropriately. Results similar to those of Pittel \cite{pittel:largest_cpt_rg} have been given by van der Hofstad, Kliem and van Leeuwaarden \cite{hofstad_et_al:cluster_tails_power_law_RGs} for inhomogeneous random graphs whose vertex degrees have power law tails, a model which is \emph{not} in the same universality class as the near-critical Erd\H{o}s-R\'enyi graph. The scaling limit in this case, rather than a Brownian motion with parabolic drift, involves a \emph{thinned L\'evy process}; further related results can be found in \cite{aidekon_et_al:large_devs_thinned_levy}.

Roberts and {\c{S}}eng{\"{u}}l \cite{roberts_sengul:dynamical_ER} consider a dynamical version of the critical ($p=1/n$) Erd\H{o}s-R\'enyi graph in which each edge independently rerandomises itself at rate $1$. That is, each edge has an independent Poisson process of rate $1$, and at each time of the Poisson process it decides---independently of its previous state and all other edges---to be turned on with probability $1/n$ and off with probability $1-1/n$. They have shown that, although at any fixed time the largest component is of order $n^{2/3}$ with high probability, with high probability there exist times in $[0,1]$ when the largest component is much bigger, of order $n^{2/3}\log^{1/3}n$. This is similar to the existence of exceptional times when an infinite component appears in critical dynamical percolation on two-dimensional lattices; see \cite{steif:survey}.

\section{Approximating the expected value of $X(k,k+l)$}\label{sec:Xkl}

Our main aim in this section is to prove Proposition \ref{prop:main}, linking $\Enl[X(k,k+l)]$ to $C(k,k+l)$. We start with the observation
\begin{equation}\label{eq:EX}
\Enl[X(k,k+l)] = \binom{n}{k} C(k,k+l) \pnl^{k+l} (1-\pnl)^{\binom{k}{2}-(k+l)+k(n-k)}.
\end{equation}
To see why this holds, note that there are $\binom{n}{k}$ possible sets of $k$ vertices, and for each choice of $k$ vertices there are $C(k,k+l)$ possible connected graphs on those vertices with exactly $k+l$ edges. For each of these, in order for that graph to appear as a component of our Erd\H{o}s-R\'enyi graph, we must have
\begin{itemize}
\item the specified $k+l$ edges turned on;
\item all other edges between the $k$ chosen vertices turned off;
\item all edges from the $k$ chosen vertices to the $n-k$ other vertices turned off.
\end{itemize}
The probability that this happens is
\[p^{k+l} \times (1-p)^{\binom{k}{2}-(k+l)} \times (1-p)^{k(n-k)}.\]
Therefore \eqref{eq:EX} holds.

To prove Proposition \ref{prop:main} starting from \eqref{eq:EX}, we proceed via a series of simple lemmas approximating the terms $\binom{n}{k}$, $\pnl^{k+l}$ and $(1-\pnl)^{\binom{k}{2} - (k+l)+k(n-k)}$. In our calculations we will often use the expansion
\begin{equation}\label{log}
\log(1+x) = x - \frac{x^2}{2} + \frac{x^3}{3} - \frac{x^4}{4} + \ldots
\end{equation}
to bound $(1+x)^y = e^{y\log(1+x)}$, including as many terms in the expansion as are required to give an accurate estimate. We will also regularly apply Stirling's formula \cite{robbins:remark_stirling},
\[j! = (2\pi)^{1/2} j^{j+1/2} e^{-j} (1+O(1/j)).\]

\begin{lem}\label{le:nchoosek}
If $k^4\le n^3$, we have
\[\binom{n}{k} = \frac{n^k}{(2\pi)^{1/2} k^{k+1/2}}\exp\left(k-\frac{k^2}{2n}-\frac{k^3}{6n^2}\right) \left(1+O(\tfrac{1}{k}+\tfrac{k}{n}+\tfrac{k^4}{n^3})\right).\]
Furthermore, there exists a finite constant $c$ such that for any $k=1,\ldots,n-1$,
\[\binom{n}{k} \le \frac{c n^k}{k^{k+1/2}}\exp\left(k-\frac{k^2}{2n}-\frac{k^3}{6n^2}\right).\]
\end{lem}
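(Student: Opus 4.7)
The plan is to avoid applying Stirling symmetrically to all three of $n!$, $k!$, and $(n-k)!$, because that approach produces a stray factor $(1-k/n)^{-1/2}$ that cannot be absorbed into a universal constant as $k\to n-1$. Instead I would write
\[ \binom{n}{k} = \frac{n^k}{k!}\prod_{i=1}^{k-1}\Bigl(1-\frac{i}{n}\Bigr), \]
apply Stirling only to $k!$, and analyse the product by taking logarithms.

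For the asymptotic statement (where $k^4\le n^3$, equivalently $k\le n^{3/4}$), I would expand $\log(1-i/n) = -\sum_{j\ge 1}(i/n)^j/j$, interchange the order of summation, and evaluate $\sum_{i=1}^{k-1}i^j$ by the exact Faulhaber identities for $j=1,2,3$ together with a crude bound $\sum_{i=1}^{k-1}i^j\le k^{j+1}$ for $j\ge 4$. The leading-order Faulhaber terms $k^{j+1}/(j+1)$ for $j=1,2$ combine with the $-1/j$ coefficients from the log expansion to give exactly $-k^2/(2n)-k^3/(6n^2)$ in the exponent of $\prod(1-i/n)$. The leftover contributions split into subleading Faulhaber corrections such as $k/(2n)$ and $k^2/(4n^2)$ and the $j\ge 4$ tail of the Taylor series, both of which are bounded by $O(k/n + k^4/n^3)$ on the relevant range and are in particular uniformly bounded, so they exponentiate to a multiplicative factor $1+O(k/n+k^4/n^3)$. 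Finally, Stirling's formula $k!=\sqrt{2\pi}\,k^{k+1/2}e^{-k}(1+O(1/k))$ introduces a further $1+O(1/k)$ factor, giving the claimed error.

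For the uniform upper bound on $1\le k\le n-1$, a one-sided argument suffices. I would use the inequality $\log(1-x)\le -x-x^2/2$ on $0\le x<1$ (immediate since the omitted tail $-\sum_{j\ge 3}x^j/j$ is nonpositive), apply it at $x=i/n$, and sum via the same Faulhaber identities to obtain
\[ \sum_{i=1}^{k-1}\log\Bigl(1-\frac{i}{n}\Bigr) \le -\frac{k^2}{2n}-\frac{k^3}{6n^2}+\frac{k}{2n}+\frac{k^2}{4n^2}-\frac{k}{12n^2}. \]
On $k\le n$ the three correction terms on the right are each bounded by an absolute constant, so they exponentiate to a constant factor. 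Combining with Stirling's one-sided bound $k!\ge\sqrt{2\pi}\,k^{k+1/2}e^{-k}$ yields the stated inequality.

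The main obstacle is exactly the organisational point I led with: naive Stirling on all three factorials produces an $(n/(n-k))^{1/2}$ factor that blows up as $k\to n-1$, so one must commit to the product representation at the outset. Beyond that, the proof is careful but unsurprising Taylor bookkeeping, and the arithmetic miracle that makes it work is the alignment of the leading Faulhaber terms with the coefficients of the log expansion so that the constant in front of $k^3/n^2$ comes out to exactly $-1/6$.
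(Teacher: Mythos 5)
Your proof is correct, but it takes a genuinely different route from the paper. The paper applies Stirling to all three factorials, writes $\binom{n}{k}=\frac{n^k}{(2\pi)^{1/2}k^{k+1/2}}\left(1-\frac{k}{n}\right)^{k-n-1/2}\left(1+O(\frac1k+\frac1{n-k})\right)$, and then expands the single logarithm $\log(1-k/n)$: for the asymptotic it uses that $k^4\le n^3$ forces $k\le n/2$ (for $n\ge16$), and for the uniform bound it keeps the full series $k-\sum_{j\ge1}\frac{k^{j+1}}{j(j+1)n^j}$ in the exponent and drops the negative terms beyond $j=2$. You instead work from $\binom{n}{k}=\frac{n^k}{k!}\prod_{i=1}^{k-1}(1-\frac{i}{n})$, apply Stirling to $k!$ only, and sum the expansion of $\log(1-i/n)$ over $i$ via Faulhaber; the bookkeeping you outline checks out (the $j=1,2$ terms give exactly $-\frac{k^2}{2n}-\frac{k^3}{6n^2}$, the corrections $\frac{k}{2n}$, $\frac{k^2}{4n^2}$, $\frac{k}{12n^2}$ and the $j\ge3$ contributions are $O(\frac{k}{n}+\frac{k^4}{n^3})$ and uniformly bounded under $k^4\le n^3$, and $\log(1-x)\le-x-\frac{x^2}{2}$ together with $k!\ge(2\pi)^{1/2}k^{k+1/2}e^{-k}$ gives the one-sided bound for all $1\le k\le n-1$). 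Your organisational point is a real advantage rather than a cosmetic one: in the paper's symmetric treatment the exponent $k-n-1/2$ produces the stray factor $(1-k/n)^{-1/2}$, which the paper folds into a $1+O(k/n)$ error; that absorption is only literally valid when $k/n$ stays bounded away from $1$ (for $k$ near $n$ the factor can be of order $n^{1/2}$), and the paper's final inequality survives only because the discarded negative terms $\sum_{j\ge3}\frac{k^{j+1}}{j(j+1)n^j}$ in the exponent are then huge. Your product representation avoids this factor entirely, at the modest cost of summing over $i$ rather than expanding one logarithm, so for the uniform upper bound your argument is the cleaner of the two, while for the asymptotic regime the two approaches are essentially equivalent in length and precision.
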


\begin{proof}
By Stirling's formula,
\begin{align*}
\binom{n}{k} &= \frac{n^{n+1/2}}{(2\pi)^{1/2} k^{k+1/2}(n-k)^{n-k+1/2}} (1+O(\tfrac{1}{k}+\tfrac{1}{n-k}))\\
&= \frac{n^k}{(2\pi)^{1/2} k^{k+1/2}} \left(1-\frac{k}{n}\right)^{k-n-1/2}(1+O(\tfrac{1}{k}+\tfrac{1}{n-k})).
\end{align*}
We may assume without loss of generality that $n\ge 16$. Then in the case when $k^4\le n^3$ we also have $k\le n/2$, and we can use the expansion \eqref{log} to get
\[\binom{n}{k} = \frac{n^k}{(2\pi)^{1/2} k^{k+1/2}} \left(\exp\left(-\frac{k}{n}-\frac{k^2}{2n^2}-\frac{k^3}{3n^3}+O\Big(\frac{k^4}{n^4}\Big)\right)\right)^{k-n-1/2} (1+O(\tfrac{1}{k})).\]
Simplifying,
\begin{align*}
\binom{n}{k} &= \frac{n^k}{(2\pi)^{1/2} k^{k+1/2}} \exp\left(-\frac{k^2}{n}-\frac{k^3}{2n^2}+k+\frac{k^2}{2n}+\frac{k^3}{3n^2}+O\Big(\frac{k^4}{n^3}\Big) + O\Big(\frac{k}{n}\Big)\right)(1+O(\tfrac{1}{k}))\\
&= \frac{n^k}{(2\pi)^{1/2} k^{k+1/2}} \exp\left(k-\frac{k^2}{2n}-\frac{k^3}{6n^2}\right)\left(1+O(\tfrac{1}{k}+\tfrac{k^4}{n^3}+\tfrac{k}{n})\right).
\end{align*}
This gives the first part of the lemma. For the second part, we note that when we used \eqref{log}, if we had written out the expansion in full we would have obtained
\begin{align*}
\binom{n}{k} &= \frac{n^k}{(2\pi)^{1/2} k^{k+1/2}} \Big(\exp\Big(-\sum_{j=1}^\infty \frac{k^j}{j n^j}\Big)\Big)^{k-n-1/2} (1+O(\tfrac{1}{k}+\tfrac{1}{n-k}))\\
& = \frac{n^k}{(2\pi)^{1/2} k^{k+1/2}} \exp\Big(k-\sum_{j=1}^\infty \frac{k^{j+1}}{j(j+1) n^j}\Big)(1+O(\tfrac{1}{k} + \tfrac{1}{n-k} + \tfrac{k}{n}))\\
& \le \frac{n^k}{(2\pi)^{1/2} k^{k+1/2}} \exp\Big(k-\frac{k^2}{2n}-\frac{k^3}{6n^2}\Big)(1+O(\tfrac{1}{k} + \tfrac{1}{n-k} + \tfrac{k}{n})).
\end{align*}
Since $1\le k\le n-1$, the $O(\cdot)$ terms contribute at most a constant factor.
\end{proof}

\begin{lem}\label{le:p}
Suppose that $|\lambda|\le n^{1/3}/2$, $l |\lambda|\le 4n^{1/3}$ and $k|\lambda|^3 \le 3n$. Then
\[\pnl^{k+l} = \frac{1}{n^{k+l}} e^{\lambda k n^{-1/3} - \frac{1}{2}\lambda^2 k n^{-2/3}}\left(1+ O\Big(\frac{\lambda l}{n^{1/3}}\Big) + O\Big(\frac{k\lambda^3}{n}\Big)\right).\]
Furthermore, for any $\lambda$, $l$, $k$ and $n$, 
\[\pnl^{k+l} \le \frac{1}{n^{k+l}} e^{\lambda k n^{-1/3} - \frac{1}{2}\lambda^2 k n^{-2/3} + \frac{1}{3}\lambda^3 k n^{-1}}(1+\lambda n^{-1/3})^l.\]
\end{lem}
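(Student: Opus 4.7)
The plan is to rewrite $\pnl = n^{-1}(1+\lambda n^{-1/3})$, so that
\[\pnl^{k+l} = n^{-(k+l)}\exp\bigl((k+l)\log(1+\lambda n^{-1/3})\bigr),\]
and then expand the logarithm via \eqref{log}. Since the intended main part of the exponent is $\lambda k n^{-1/3} - \tfrac{1}{2}\lambda^2 k n^{-2/3}$, I would split the remaining exponent as $E = E_k + E_l$, where $E_k := k\sum_{j\ge 3} (-1)^{j+1}\lambda^j/(j n^{j/3})$ is the $k$-tail past quadratic order and $E_l := l\log(1+\lambda n^{-1/3})$ is the entire $l$-contribution. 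Both manipulations are routine; the content of the lemma lies in bounding $E$ carefully.

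For part (a), the assumption $|\lambda|\le n^{1/3}/2$ gives $|\lambda|n^{-1/3}\le 1/2$, so geometric tail bounds yield
\[|E_k|\le \frac{k|\lambda|^3}{3n}\cdot\frac{1}{1-|\lambda|n^{-1/3}} = O(k|\lambda|^3/n) \quad\text{and}\quad |E_l|\le \frac{2l|\lambda|}{n^{1/3}} = O(|\lambda|l/n^{1/3}).\]
The conditions $k|\lambda|^3 \le 3n$ and $l|\lambda|\le 4n^{1/3}$ then force $E = O(1)$, so $e^E = 1+O(E)$ produces the asymptotic identity. The one bookkeeping subtlety is to observe that the linear-in-$\lambda$ term $l\lambda n^{-1/3}$ coming from $E_l$ is itself absorbed into $O(|\lambda|l/n^{1/3})$, so one does not need to split off quadratic or higher $l$-pieces separately.

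For part (b), I would use the elementary inequality
\[\log(1+x)\le x - \tfrac{x^2}{2} + \tfrac{x^3}{3} \quad \text{for all } x > -1,\]
which follows because $f(x):= x - x^2/2 + x^3/3 - \log(1+x)$ has derivative $f'(x) = x^3/(1+x)$ and therefore attains its minimum $f(0)=0$ at $x=0$. Applying this with $x=\lambda n^{-1/3}$ only to the $k$-factor of the factorisation $(1+\lambda n^{-1/3})^{k+l} = (1+\lambda n^{-1/3})^k(1+\lambda n^{-1/3})^l$, while leaving the $l$-factor intact, delivers exactly the stated upper bound (valid whenever $\pnl>0$; otherwise the claim is vacuous). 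The only point really requiring care is the geometric-tail bookkeeping in part (a); no deeper estimate is needed.
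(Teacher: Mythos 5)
Your proposal is correct and follows essentially the same route as the paper: split off the $l$-part of $(1+\lambda n^{-1/3})^{k+l}$ and absorb it wholesale into the $O(\lambda l n^{-1/3})$ error, expand the $k$-part of the logarithm to quadratic order with a cubic tail bound under $|\lambda|n^{-1/3}\le 1/2$, and for the upper bound apply $\log(1+x)\le x-\tfrac{x^2}{2}+\tfrac{x^3}{3}$ to the $k$-factor only (the paper simply cites this inequality where you prove it). No gaps.
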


\begin{proof}
First we write
\[p^{k+l} = \frac{1}{n^{k+l}}(1+\lambda n^{-1/3})^{k+l} = \frac{1}{n^{k+l}}(1+\lambda n^{-1/3})^l(1+\lambda n^{-1/3})^k.\]
Using \eqref{log}, if $k|\lambda|^3 \le 3n$ (the value $3$ here is not important---any constant would do---but $3$ will be sufficient when we come to prove Proposition \ref{prop:main}) then
\[(1+\lambda n^{-1/3})^k = e^{k(\lambda n^{-1/3} - \lambda^2 n^{-2/3}/2 + O(\lambda^3 n^{-1}))} = e^{k(\lambda n^{-1/3} - \lambda^2 n^{-2/3}/2)}\left(1+O\Big(\frac{k\lambda^3}{n}\Big)\right)\]
and similarly if $l |\lambda|\le n^{1/3}$ then
\[(1+\lambda n^{-1/3})^l = 1 + O(\lambda l n^{-1/3}).\]
This gives the first part of the lemma; for the second, we use instead the fact that $\log(1+x)\le x-x^2/2+x^3/3$ for all $x$.
\end{proof}

\begin{lem}\label{le:1mp}
Suppose that $|\lambda|\le n^{1/3}/2$. If $k + l \le 3n$ then
\[(1-\pnl)^{\binom{k}{2}-(k+l) + k(n-k)} = e^{-k - \lambda k n^{-1/3} + k^2/(2n) + \lambda k^2 / (2n^{4/3})} \left(1 + O(\tfrac{k+l}{n})\right).\]
Furthermore, there exists a constant $c$ such that whenever $|\lambda|\le n^{1/3}/2$,
\[(1-\pnl)^{\binom{k}{2}-(k+l) + k(n-k)} \le  c e^{-k - \lambda k n^{-1/3} + k^2/(2n) + \lambda k^2 / (2n^{4/3})}(1-2/n)^{-l}.\]
\end{lem}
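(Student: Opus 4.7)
The plan is to write $(1-\pnl)^M = \exp(M\log(1-\pnl))$ with $M = \binom{k}{2}-(k+l)+k(n-k) = kn - k^2/2 - 3k/2 - l$, and expand the logarithm via the series \eqref{log} already used in Lemmas \ref{le:nchoosek} and \ref{le:p}. Under $|\lambda|\le n^{1/3}/2$ one has $\pnl = n^{-1}+\lambda n^{-4/3} \in [n^{-1}/2,\,3n^{-1}/2]$, so all higher powers $\pnl^j$ are small and the Taylor tail converges rapidly.

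For the equality, I would carry out the multiplication $M\pnl$ term by term: the pieces $kn\cdot\pnl$ and $-(k^2/2)\cdot\pnl$ generate exactly the four advertised main terms $-k - \lambda k n^{-1/3} + k^2/(2n) + \lambda k^2/(2n^{4/3})$, while the leftover monomials $3k/(2n),\ 3\lambda k/(2n^{4/3}),\ l/n,\ \lambda l/n^{4/3}$ are each $O((k+l)/n)$ once $|\lambda|\le n^{1/3}/2$ is used. For the correction terms $-M\pnl^j/j$ with $j\ge 2$, the crude bounds $|M|\le 3kn+|l|$ and $\pnl\le 3/(2n)$ give $|M\pnl^j/j| = O((k+l)/n^{j-1})$, so $j=2$ contributes $O((k+l)/n)$ and $j\ge 3$ are even smaller (the geometric tail is summable). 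Collecting,
\[M\log(1-\pnl) = -k - \lambda k n^{-1/3} + \tfrac{k^2}{2n} + \tfrac{\lambda k^2}{2n^{4/3}} + O\big(\tfrac{k+l}{n}\big),\]
and the hypothesis $k+l\le 3n$ bounds the error in the exponent by $O(1)$, so $e^{O((k+l)/n)} = 1 + O((k+l)/n)$.

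For the upper bound, I would factor $(1-\pnl)^M = (1-\pnl)^{M_1}(1-\pnl)^{-l}$ with $M_1 = kn - k^2/2 - 3k/2 \ge 0$ (the nonnegativity is automatic for every $k\le n$, the only regime of interest). Since each term in \eqref{log} is non-positive, $\log(1-\pnl)\le -\pnl$, so
\[M_1 \log(1-\pnl) \le -M_1\pnl = -k - \lambda k n^{-1/3} + \tfrac{k^2}{2n} + \tfrac{\lambda k^2}{2n^{4/3}} + \tfrac{3k}{2n} + \tfrac{3\lambda k}{2n^{4/3}},\]
with the last two monomials uniformly bounded using $k\le n$ and $|\lambda|\le n^{1/3}/2$; this yields $(1-\pnl)^{M_1} \le c_1\, e^{-k - \lambda k n^{-1/3} + k^2/(2n) + \lambda k^2/(2n^{4/3})}$. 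For the remaining factor, $\pnl\le 3/(2n)<2/n$ gives $1-\pnl\ge 1-2/n$, hence $(1-\pnl)^{-l}\le (1-2/n)^{-l}$ directly for $l\ge 0$; the isolated case $l=-1$ (the only negative value encountered later in the paper) supplies an extra factor $1-\pnl\le 1$ which is absorbed by a bounded multiple of $(1-2/n)^{-l}$. The main obstacle is simply the careful bookkeeping of these leftover monomials and the Taylor tail: the skill is to verify that, after invoking $|\lambda|\le n^{1/3}/2$, all of them genuinely fit inside the single error term $O((k+l)/n)$.
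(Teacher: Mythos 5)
Your proposal is correct and follows essentially the same route as the paper: both expand $\log(1-\pnl)$ via \eqref{log}, absorb the leftover monomials $3k/(2n)$, $3\lambda k/(2n^{4/3})$, $l/n$, $\lambda l/n^{4/3}$ into the $O((k+l)/n)$ error using $|\lambda|\le n^{1/3}/2$, and for the uniform bound peel off the factor $(1-\pnl)^{-l}\le c(1-2/n)^{-l}$. The only differences are bookkeeping ones (the paper groups the exponent as $(kn-k^2/2)$ and $(-3k/2-l)$, while you keep $-3k/2$ with the main part and use the one-sided inequality $\log(1-\pnl)\le-\pnl$ together with $M_1\ge 0$), and these do not change the substance of the argument.
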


\begin{proof}
We start by writing
\[(1-\pnl)^{\binom{k}{2}-(k+l) + k(n-k)} = (1-\pnl)^{kn-k^2/2}(1-\pnl)^{-3k/2-l}\]
and treating the two terms separately. For the last term, if $k+l\le 3n$ then
\[(1-\pnl)^{-3k/2-l} = 1+O(\tfrac{k+l}{n}),\]
and otherwise (since we always have $k\le n$)
\[(1-\pnl)^{-3k/2-l} = (1-\pnl)^{-l}(1+O(k/n)) \le c(1-2/n)^{-l}.\]
For the other term $(1-\pnl)^{kn-k^2/2}$, we use \eqref{log} to get
\begin{align*}
(1-\pnl)^{kn-k^2/2} &= e^{-kn\pnl + k^2\pnl/2}(1+O(kn\pnl^2)+O(k^2\pnl^2))\\
&= e^{-k - \lambda k n^{-1/3} + k^2/(2n) + \lambda k^2/(2n^{4/3})}(1+O(\tfrac{k}{n})).
\end{align*}
Combining these estimates gives the result.
\end{proof}

We can now complete the proof of Proposition \ref{prop:main} by combining the results above.

\begin{proof}[Proof of Proposition \ref{prop:main}]
Our starting point is \eqref{eq:EX}, which said that
\[\Enl[X(k,k+l)] = \binom{n}{k} C(k,k+l) \pnl^{k+l} (1-\pnl)^{\binom{k}{2}-(k+l)+k(n-k)}.\]
We first concentrate on the first bound. For $k\le n^{3/4}$, the first part of Lemma \ref{le:nchoosek} holds, i.e.
\[\binom{n}{k} = \frac{n^k}{(2\pi)^{1/2} k^{k+1/2}}\exp\left(k-\frac{k^2}{2n}-\frac{k^3}{6n^2}\right) \left(1+O(\tfrac{1}{k}+\tfrac{k}{n}+\tfrac{k^4}{n^3})\right).\]
Also, when $k\le n^{3/4}$, $l\le 4 n^{1/4}$ and $|\lambda|\le n^{1/12}$, the first part of Lemma \ref{le:p} holds, i.e.
\[\pnl^{k+l} = \frac{1}{n^{k+l}} e^{\lambda k n^{-1/3} - \frac{1}{2}\lambda^2 k n^{-2/3}}\left(1+ O\Big(\frac{\lambda l}{n^{1/3}}\Big) + O\Big(\frac{k\lambda^3}{n}\Big)\right).\]
Thirdly, when $k\le n^{3/4}$ and $l\le 4n^{1/4}$, the first part of Lemma \ref{le:1mp} holds, i.e.
\[(1-\pnl)^{\binom{k}{2}-(k+l) + k(n-k)} = e^{-k - \lambda k n^{-1/3} + k^2/(2n) + \lambda k^2 / (2n^{4/3})} \left(1 + O(\tfrac{k+l}{n})\right).\]
Combining the three approximations above gives the first part of the result, except with an extra error term $O(l/n)$. Since $l\le 4n^{1/4}$, we have $l/n \le 4n^{-3/4}$; and by optimising over $k$, the terms $k^4/n^3$ and $1/k$ in $\Cr{EXerr}$ are at least of order $n^{-3/5}$, which is larger than $n^{-3/4}$. Therefore we can absorb the $O(l/n)$ term into $\Cr{EXerr}$ and the first part of the result follows.

For the second part of the result, we instead use the second parts of Lemmas \ref{le:nchoosek}, \ref{le:p} and \ref{le:1mp}, together with the approximation
\[C(k,k+l)\le c (l\vee 1)^{-l/2}k^{k+(3l-1)/2}\]
which holds for all $l$ and $k$ and is due to Bollob\'as \cite[Corollary 5.21]{bollobas_book}.
\end{proof}

\section{Wright's coefficients}\label{sec:wright}

We can see from Proposition \ref{prop:main} that we need to know how $C(k,k+l)$ behaves. Cayley's formula tells us that $C(k,k-1)=k^{k-2}$. It is also well-known that $C(k,k) = (\pi/8)^{1/2}k^{k-1/2}(1+O(k^{-1/2}))$; see \cite[Corollary 5.19]{bollobas_book} for example. In this section we give details of asymptotics for $C(k,k+l)$ for other values of $l$. We develop only the estimates that we need to prove Theorems \ref{thm:YZ} and \ref{thm:vLYZ}, and the reader who is interested in a more thorough treatment of $C(k,k+l)$ as an interesting object in itself may wish to refer to Janson \cite{janson:brownian_wright}.

Wright's coefficients ($\gamma_l$, $l\ge 1$) satisfy
\begin{equation}\label{eq:gamma}
\gamma_l = \frac{\pi^{1/2}3^l (l-1)! d_l}{2^{(5l-1)/2} \Gamma(3l/2)}
\end{equation}
where $d_1=d_2=5/36$ and
\[d_{l+1} = d_l + \sum_{i=1}^{l-1} \frac{d_i d_{l-i}}{(l+1)\binom{l}{i}}, \,\, l\ge 2.\]
Wright \cite{wright:connectedIII} gave an asymptotic for $C(k,k+l)$ in terms of $\gamma_l$ for $l=o(k^{1/3})$. This was later improved by various authors, and in particular we now know that
\begin{equation}\label{eq:C}
C(k,k+l) = \gamma_l k^{k+(3l-1)/2} (1+O(\tfrac{l^{2}}{k}+\tfrac{(l+1)^{1/16}}{k^{9/50}})) \hspace{4mm} \hbox{ for all } l\le 4k^{1/2};
\end{equation}
see \cite{bender_et_al:asymptotic_connected_graphs}. See also \cite{luczak:sparse_connected_graphs} and \cite{hofstad_spencer:connected_graphs} for two beautiful proofs of slightly less precise asymptotics, the former only a few pages long and using the Erd\H{o}s-R\'enyi random graph.

Clearly the sequence $(d_l)$ is increasing; Wright \cite[Theorem 3]{wright:connectedIII} showed that it is bounded above and therefore converges to some limit $d$, which Voblyi \cite[Theorem 3]{voblyi:wright} identified as $1/(2\pi)$. We can adapt Wright's proof that the sequence converges to bound the rate of convergence.

\begin{lem}\label{le:dconv}
As $l\to\infty$, $1/(2\pi)-d_l = O(1/l)$.
\end{lem}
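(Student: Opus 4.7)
The plan is to set $e_l := \tfrac{1}{2\pi} - d_l$ and show directly that $e_l = O(1/l)$. Since Wright proved $(d_l)$ is increasing and converges to $1/(2\pi)$, we know $e_l > 0$ and $e_l \to 0$, so it suffices to produce a summable bound on the increments $e_l - e_{l+1}$ and then telescope.

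First I would rearrange the recursion defining $d_{l+1}$ to read
\[
e_l - e_{l+1} \;=\; d_{l+1} - d_l \;=\; \sum_{i=1}^{l-1}\frac{d_i\,d_{l-i}}{(l+1)\binom{l}{i}}.
\]
Since $e_l \to 0$ I may telescope to obtain
\[
e_l \;=\; \sum_{m=l}^{\infty}(e_m - e_{m+1}) \;=\; \sum_{m=l}^{\infty}\frac{1}{m+1}\sum_{i=1}^{m-1}\frac{d_i\,d_{m-i}}{\binom{m}{i}}.
\]

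Next I would bound the inner sum. Using $d_i \le 1/(2\pi)$ uniformly (a consequence of Wright's convergence result already cited) and splitting off the $i=1$ and $i=m-1$ terms from the rest, the two boundary terms each contribute $O(1/m)$ from the factor $1/\binom{m}{i}=1/m$, while for $2 \le i \le m-2$ we have $\binom{m}{i} \ge \binom{m}{2}$, so
\[
\sum_{i=2}^{m-2}\frac{1}{\binom{m}{i}} \;\le\; \frac{2(m-3)}{m(m-1)} \;=\; O(1/m).
\]
Combining, $\sum_{i=1}^{m-1} 1/\binom{m}{i} = O(1/m)$, and therefore $e_m - e_{m+1} = O(1/m^2)$.

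Plugging back in gives $e_l = \sum_{m=l}^{\infty} O(1/m^2) = O(1/l)$, which is the claim. The only conceptual step is recognising that the inner sum is dominated by the two endpoints $i=1$ and $i=m-1$ because the binomial coefficients in the denominator blow up away from the edges; this is what forces the quadratic decay of the increments and hence the $O(1/l)$ rate. No step looks like a serious obstacle, but one must be a little careful that the constant hidden in $d_i \le 1/(2\pi)$ is uniform in $i$, which is exactly what Wright's convergence (assumed in the statement) provides.
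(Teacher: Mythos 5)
Your proposal is correct and takes essentially the same route as the paper: bound the increment $d_{m+1}-d_m$ by $O(1/m^2)$ using $d_i\le 1/(2\pi)$ together with $\sum_{i=1}^{m-1}1/\binom{m}{i}=O(1/m)$, then telescope the tail to get $O(1/l)$. The only (cosmetic) difference is in how the reciprocal binomial sum is controlled: the paper restricts to $i\le m/2$ by symmetry and sums a geometric-type series, while you split off the endpoint terms and use $\binom{m}{i}\ge\binom{m}{2}$ for $2\le i\le m-2$, which is equally valid.
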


\begin{proof}
Since $d_l$ is an increasing sequence, $d_l\le d=1/(2\pi)$ for all $l$. Therefore for any $j\ge 2$,
\begin{align*}
d_{j+1}-d_j = \sum_{i=1}^{j-1} \frac{d_id_{j-i}}{(j+1)\binom{j}{i}} &\le 2d^2 \sum_{i=1}^{\lfloor j/2\rfloor} \frac{1}{(j+1)\binom{j}{i}}\\
&\le 2d^2 \sum_{i=1}^{\lfloor j/2\rfloor} \frac{i}{(j+1)j} \frac{(i-1)(i-2)\ldots 1}{(j-1)(j-2)\ldots (j-i+1)}\\
&\le \frac{2d^2}{j(j+1)} \sum_{i=1}^{\lfloor j/2\rfloor} i \left(\frac{i}{j}\right)^{i-1} \le \frac{2d^2}{j(j+1)} \sum_{i=1}^{\infty} i \left(\frac{1}{2}\right)^{i-1} = \frac{8d^2}{j(j+1)}.
\end{align*}
Thus
\[1/(2\pi)-d_l = d-d_l = \sum_{j=l}^\infty (d_{j+1}-d_j) \le 8d^2 \sum_{j=l}^\infty 1/j^2 = O(1/l).\qedhere\]
\end{proof}

\noindent
Applying Stirling's formula and then Lemma \ref{le:dconv} to \eqref{eq:gamma}, we obtain for $l\ge 1$
\begin{equation}\label{eq:gammaasymp}
\gamma_l = d_l (3\pi)^{1/2} \left(\frac{e}{12 l}\right)^{l/2}(1+O(1/l)) = \Big(\frac{3}{4\pi}\Big)^{1/2} \left(\frac{e}{12 l}\right)^{l/2}(1+O(1/l)).
\end{equation}
We also know that $C(k,k-1)=k^{k-2}$ and $C(k,k) = (\pi/8)^{1/2}k^{k-1/2}(1+O(k^{-1/2}))$, so in fact if we replace $\frac{e}{12l}$ with $\frac{e}{12(l\vee 1)}$ then \eqref{eq:gammaasymp} holds for all $l\ge -1$.

Combining this with Proposition \ref{prop:main} and \eqref{eq:C}, we immediately get the following corollary. The first line of the equality is more useful for small $l$, and the second line for larger $l$.

\begin{cor}\label{cor:EX}
Suppose that $|\lambda|\le n^{1/12}$, $1\le k\le n^{3/4}$ and $-1\le l\le 4k^{1/2}\wedge 4n^{1/4}$. Then
\begin{align*}
\Enl[X(k,k+l)] &= \frac{\gamma_l k^{3l/2-1}}{(2\pi)^{1/2}n^l} e^{-F_\lambda(k/n^{2/3})}(1+O(\Cr{EXerr} + \tfrac{l^2}{k} + \tfrac{(l+1)^{1/16}}{k^{9/50}}))\\
&= \Big(\frac{3}{8\pi}\Big)^{1/2} \frac{1}{k} \Big(\frac{ek^3}{12n^2(l\vee 1)}\Big)^{l/2} e^{-F_\lambda(k/n^{2/3})} (1+O(\Cr{EXerr} + \tfrac{l^2}{k} + \tfrac{(l+1)^{1/16}}{k^{9/50}} +\tfrac{1}{l})).
\end{align*}
\end{cor}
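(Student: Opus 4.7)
The plan is a direct substitution exercise: combine Proposition \ref{prop:main} with equation \eqref{eq:C} to obtain the first equality, and then apply the asymptotic \eqref{eq:gammaasymp} for $\gamma_l$ to pass to the second. No fundamentally new ideas are required; the work is algebraic bookkeeping of exponents and error terms.

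For the first equality, the hypotheses align so that Proposition \ref{prop:main} (which requires $l \le 4n^{1/4}$) and equation \eqref{eq:C} (which requires $l \le 4k^{1/2}$) both apply---the intersection of these two bounds is exactly the constraint $l \le 4k^{1/2}\wedge 4n^{1/4}$ written in the corollary. I substitute $C(k,k+l) = \gamma_l k^{k+(3l-1)/2}(1 + O(l^2/k + (l+1)^{1/16}/k^{9/50}))$ into the conclusion of Proposition \ref{prop:main}; the powers of $k$ in numerator and denominator combine cleanly as $k^{k+(3l-1)/2}/k^{k+1/2} = k^{3l/2-1}$. Since both multiplicative error factors are $o(1)$ in the stated regime, their product telescopes into $1 + O(\Cr{EXerr} + l^2/k + (l+1)^{1/16}/k^{9/50})$, which is the first stated error.

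For the second equality, I substitute $\gamma_l = (3/(4\pi))^{1/2}(e/(12(l\vee 1)))^{l/2}(1+O(1/l))$ from \eqref{eq:gammaasymp}, using the remark immediately after that display which extends the formula to all $l\ge -1$ via the $l\vee 1$ convention. The rearrangement $k^{3l/2-1}/n^l = k^{-1}\cdot(k^3/n^2)^{l/2}$ folds the $k$ and $n$ powers into the bracket $(ek^3/(12(l\vee 1)n^2))^{l/2}$; the numerical prefactor then simplifies, and the new $O(1/l)$ factor from \eqref{eq:gammaasymp} joins the rest of the error. The only point of mild care is at the boundary values $l\in\{-1,0\}$, where the additive $1/l$ in the error is not meaningful in the usual sense; however, since $\gamma_{-1}=1$ (Cayley) and $\gamma_0=(\pi/8)^{1/2}$ (from the known $C(k,k)$ asymptotic) are known exactly, and the convention $l\vee 1$ keeps the bracket finite, $1+O(1/l)$ at these values is naturally interpreted as an unspecified bounded factor, which is precisely what is needed. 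I do not anticipate any real obstacle; the entire proof should occupy only a few lines.
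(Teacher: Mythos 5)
Your proposal is correct and follows exactly the route the paper takes: the corollary is obtained by substituting \eqref{eq:C} into Proposition \ref{prop:main} and then applying \eqref{eq:gammaasymp} (with the $l\vee1$ convention extending it to $l=-1,0$ via Cayley's formula and the known $C(k,k)$ asymptotic). Nothing further is needed.
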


The following bound will be useful when $k$ is small.

\begin{cor}\label{cor:smallk}
For any $M>0$, there exist constants $0<c_1\le c_2<\infty$ depending on $M$ such that
\[c_1 k^{k-2}n \le \sum_{l=-1}^L \frac{C(k,k+l)}{n^l} \le c_2 k^{k-2}n\]
for all $n\ge 1$, $k\le Mn^{2/3}$, and $L\le 4k^{1/2}$.
\end{cor}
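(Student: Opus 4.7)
The lower bound is immediate from a single term of the sum. By Cayley's formula $C(k,k-1) = k^{k-2}$, so the $l = -1$ term alone contributes $k^{k-2}/n^{-1} = n k^{k-2}$; since all the other $C(k,k+l)$ are nonnegative, the full sum is at least $n k^{k-2}$, yielding $c_1 = 1$.

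For the upper bound, my plan is to separate off the $l = -1$ term (which is exactly $n k^{k-2}$) and control the remaining $l \ge 0$ terms via the uniform combinatorial bound $C(k, k+l) \le c(l \vee 1)^{-l/2} k^{k + (3l-1)/2}$ due to Bollob\'as, which is valid for all $k \ge 1$ and $l \ge -1$ and was already invoked in the proof of Proposition \ref{prop:main}. Substituting and using $k \le M n^{2/3}$ to bound $k^{3/2}/n \le M^{3/2}$, for each $l \ge 0$ I get
\[\frac{C(k, k+l)}{n^l} \le c\, k^{k-1/2} \left(\frac{k^{3/2}}{n(l \vee 1)^{1/2}}\right)^l \le c\, k^{k-1/2}\left(\frac{M^{3/2}}{(l \vee 1)^{1/2}}\right)^l.\]

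Summing over $l \ge 0$ and extending the sum to infinity, the series $\sum_{l=0}^\infty (M^{3/2}/(l \vee 1)^{1/2})^l$ converges to some finite $K_M$: for $l > 4M^3$ the summands are below $2^{-l/2}$ (a geometric tail), and only finitely many terms lie below that threshold. Combining, one obtains
\[\sum_{l=-1}^L \frac{C(k, k+l)}{n^l} \le n k^{k-2} + c K_M\, k^{k-1/2},\]
and the identity $k^{k-1/2} = (k^{3/2}/n) \cdot n k^{k-2} \le M^{3/2} n k^{k-2}$ then consolidates the right-hand side into $(1 + c K_M M^{3/2}) n k^{k-2}$, so $c_2 = 1 + c K_M M^{3/2}$ suffices.

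I foresee no serious obstacle. The small-$k$ range is covered uniformly by Bollob\'as's bound, so no case split on $k$ is required and the sharper asymptotics \eqref{eq:C} and \eqref{eq:gammaasymp} need not be invoked; the convergence of $K_M$ is elementary; and the passage from $k^{k-1/2}$ to $n k^{k-2}$ is a one-line calculation using the standing hypothesis $k \le M n^{2/3}$. The only point to remember is that Bollob\'as's bound must be applied for $l \ge 0$ only (the $l = -1$ case is handled by the exact Cayley value), since for $l = -1$ the factor $(k^{3/2}/n)^l$ cannot be controlled from above by a power of $M^{3/2}$.
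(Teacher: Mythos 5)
Your proof is correct, and it takes a somewhat different route from the paper's. The paper proves Corollary \ref{cor:smallk} by applying the two-sided asymptotics \eqref{eq:C} and \eqref{eq:gammaasymp} to every term $-1\le l\le 4k^{1/2}$, reducing the problem to the sum $\sum_l \big(\tfrac{ek^3}{12n^2(l\vee 1)}\big)^{l/2}$ and then observing that, since $k^3/n^2\le M^3$, this sum is comparable to its first ($l=-1$) term. You instead get the lower bound for free from the exact Cayley value $C(k,k-1)=k^{k-2}$, and the upper bound from Bollob\'as's uniform one-sided estimate $C(k,k+l)\le c(l\vee 1)^{-l/2}k^{k+(3l-1)/2}$ (already used in the proof of Proposition \ref{prop:main}) together with the convergence of $\sum_{l\ge 0}\big(M^{3/2}/(l\vee 1)^{1/2}\big)^l$; the shared ingredient is the reduction $k^{3/2}/n\le M^{3/2}$ that makes the $l=-1$ term dominant. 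What your version buys: it is more elementary, needs no information about Wright's coefficients, and avoids invoking \eqref{eq:C} as a two-sided bound all the way up to $l=4k^{1/2}$, where its relative error $O(l^2/k)$ is only bounded rather than small (harmless for a constant-factor statement, but your route does not even raise the issue); your lower bound is also sharper, giving $c_1=1$. What the paper's version buys is economy within its own development: it reuses the asymptotics it has already set up for Corollary \ref{cor:EX} and Lemma \ref{le:lsumasymp}, and it gives matching term-by-term two-sided control rather than treating the two bounds by separate devices. Two cosmetic remarks: your series bound should say that only finitely many indices lie \emph{below} the threshold $l\le 4M^3$ (the tail beyond it is geometric), and your closing caution about $l=-1$ is not strictly necessary, since Bollob\'as's bound at $l=-1$ reads $C(k,k-1)\le ck^{k-2}$ and would serve equally well there; but handling $l=-1$ exactly, as you do, is cleaner.
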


\begin{proof}
By \eqref{eq:C} and \eqref{eq:gammaasymp}, we have
\[c_1 \Big(\frac{e}{12(l\vee 1)}\Big)^{l/2} k^{k+(3l-1)/2} \le C(k,k+l) \le c_2 \Big(\frac{e}{12(l\vee 1)}\Big)^{l/2} k^{k+(3l-1)/2}\]
for all $-1\le l\le 4k^{1/2}$. Dividing by $n^l$ and summing over $l\le L$,
\[c_1 k^{k-1/2} \sum_{l=-1}^L \Big(\frac{ek^3}{12n^2(l\vee 1)}\Big)^{l/2} \le \sum_{l=-1}^L \frac{C(k,k+l)}{n^l} \le c_2 k^{k-1/2} \sum_{l=-1}^L \Big(\frac{ek^3}{12n^2(l\vee 1)}\Big)^{l/2}.\]
Since $k^3/n^2\le M^3$, the sum is bounded above and below by constants times its first term, which gives the result.
\end{proof}

\noindent
Inspecting the second line of Corollary \ref{cor:EX}, and letting $a=k/n^{2/3}$, we see that if we are interested in all components of size $k$ when $k\approx an^{2/3}$ then we will need to estimate $\sum_l (\frac{e a^3}{12 l})^{l/2}$ for large $a$. The following lemma will be useful for this purpose.

\begin{lem}\label{le:opt}
If $x\in \left[\frac{a^3}{12}(1-a^{-4/3}), \frac{a^3}{12}(1+a^{-4/3})\right]$, then
\[\left(\frac{e a^3}{12x}\right)^{x/2} = \exp\left(\frac{a^3}{24} - \frac{a^3y^2}{48} + O(a^{-1})\right)\]
where $y=12x/a^3 -1$.
\end{lem}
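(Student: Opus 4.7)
The natural move is to substitute the small parameter $y = 12x/a^3 - 1$ (so that $|y| \le a^{-4/3}$ by hypothesis) directly into the expression, and then expand logarithms. We have $\frac{ea^3}{12x} = \frac{e}{1+y}$ and $x/2 = \frac{a^3}{24}(1+y)$, so taking logarithms the identity to prove becomes
\[\frac{a^3}{24}(1+y)\bigl(1 - \log(1+y)\bigr) = \frac{a^3}{24} - \frac{a^3 y^2}{48} + O(a^{-1}).\]

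The plan is then to apply the Taylor expansion $\log(1+y) = y - y^2/2 + y^3/3 + O(y^4)$, which is valid because $|y| \le a^{-4/3}$ is small (we may of course assume $a$ is large, since for bounded $a$ the interval for $x$ is of bounded length and the statement is trivial up to adjusting the implicit constant). Multiplying out the product $(1+y)(1 - y + y^2/2 - y^3/3 + O(y^4))$, the constant term is $1$, the coefficient of $y$ cancels as $-1 + 1 = 0$, the coefficient of $y^2$ is $1/2 - 1 = -1/2$, and the remaining terms are of order $O(y^3)$.

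Multiplying by $a^3/24$ then yields
\[\frac{a^3}{24} - \frac{a^3 y^2}{48} + O(a^3 y^3),\]
and the constraint $|y| \le a^{-4/3}$ gives $a^3 |y|^3 \le a^{-1}$, absorbing the cubic remainder into the claimed error term. Exponentiating gives the stated formula.

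There really is no obstacle here — the lemma is essentially a second-order Taylor expansion combined with one cancellation, and the only thing to be careful about is tracking the cubic-in-$y$ error against the prefactor $a^3$ to confirm it is exactly of size $O(a^{-1})$, which is the reason for the choice of width $a^{-4/3}$ in the hypothesis.
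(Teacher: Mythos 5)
Your proof is correct and follows essentially the same route as the paper: substitute $y=12x/a^3-1$, expand $\log(1+y)$ to second order, observe the cancellation of the linear term, and bound the cubic remainder by $a^3|y|^3\le a^{-1}$ using $|y|\le a^{-4/3}$. No issues.
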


\begin{proof}
Let $y = 12x/a^3 -1$, so that $x = \frac{a^3}{12}(1+y)$. Then using \eqref{log},
\begin{align*}
\left(\frac{e a^3}{12x}\right)^{x/2} = \left(\frac{e}{1+y}\right)^{a^3(1+y)/24} &= e^{a^3(1+y)/24 - a^3(1+y)\log(1+y)/24}\\
&= e^{a^3(1+y)/24 - a^3(1+y)(y-y^2/2+O(y^3))/24}\\
&= e^{a^3/24 - a^3 y^2/48 +O(a^3 y^3)}.
\end{align*}
Now, $|y|\le a^{-4/3}$, so $|a^3 y^3| \le 1/a$ and the result follows.
\end{proof}

This allows us to bound the sum of the terms involving $l$ in Proposition \ref{prop:main}.

\begin{lem}\label{le:lsumasymp}
If $k\ge n^{2/3}$ and $k^3/n^2 \le L \le 4k^{1/2}$, then
\[\sum_{l=-1}^L \frac{C(k,k+l)}{n^l} = \frac{k^{k+1}}{2n} \exp\Big(\frac{k^3}{24 n^2}\Big) (1+O(\tfrac{n^{2/3}}{k}+\tfrac{k^5}{n^4}+\tfrac{k^{3/400}}{n^{1/8}})).\]
\end{lem}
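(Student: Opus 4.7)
The plan is to substitute the asymptotics \eqref{eq:C} and \eqref{eq:gammaasymp} into the sum, identify the dominant contribution as concentrated around a specific $l^*$, and apply Lemma \ref{le:opt} to evaluate the Gaussian peak. Writing $a = k/n^{2/3}$ and combining the two cited asymptotics (with the $l \vee 1$ convention handling $l \in \{-1, 0\}$) gives
\[\frac{C(k,k+l)}{n^l} = \Big(\frac{3}{4\pi}\Big)^{1/2} k^{k-1/2}\Big(\frac{ea^3}{12(l\vee 1)}\Big)^{l/2}(1+E_l), \quad E_l = O\Big(\tfrac{l^2}{k}+\tfrac{(l+1)^{1/16}}{k^{9/50}}+\tfrac{1}{l\vee 1}\Big).\]
Differentiating shows that $l \mapsto (l/2)\log(ea^3/(12l))$ is maximised at $l^* := a^3/12 = k^3/(12n^2)$, with peak value $a^3/24$ and second derivative $-1/(2l^*)$; this predicts a Gaussian window of width $\sim \sqrt{l^*}$ around $l^*$ and peak height $e^{k^3/(24n^2)}$, matching the target.

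I would split the sum at $l = l^*(1 \pm a^{-4/3})$, the boundary of the range on which Lemma \ref{le:opt} applies. Inside this window, writing $m = l - l^*$, Lemma \ref{le:opt} yields
\[\Big(\frac{ea^3}{12l}\Big)^{l/2} = \exp\Big(\frac{a^3}{24} - \frac{m^2}{4l^*} + O(a^{-1})\Big),\]
reducing the interior sum to a Riemann sum for a Gaussian. Since the window half-width $l^* a^{-4/3} = a^{5/3}/12$ is much larger than the Gaussian scale $\sqrt{l^*} \sim a^{3/2}/\sqrt{12}$, the Riemann sum approximates $\int_{-\infty}^{\infty} e^{-t^2/(4l^*)}\,dt = 2\sqrt{\pi l^*}$ with relative error $O(1/\sqrt{l^*}) = O(n/k^{3/2})$. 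Multiplying the constants through and using $\sqrt{3l^*} = a^{3/2}/2 = k^{3/2}/(2n)$ gives the main term
\[\Big(\frac{3}{4\pi}\Big)^{1/2} k^{k-1/2} \cdot 2\sqrt{\pi l^*}\, \cdot e^{a^3/24} = \frac{k^{k+1}}{2n}\, e^{k^3/(24n^2)}.\]

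Outside the Laplace window the summand is already $\exp(-a^{1/3}/48)$ smaller than the peak (by evaluating Lemma \ref{le:opt} at the boundary with $y = \pm a^{-4/3}$), and monotonicity of $(l/2)\log(ea^3/(12l))$ on each side of $l^*$ drives the tails down further; the hypothesis $L \ge k^3/n^2 = 12 l^*$ guarantees the upper tail is truncated far past the peak, and the unconditional bound from \eqref{eq:C} controls the extreme right tail up to $L \le 4k^{1/2}$. The stated error then collects: $E_l$ evaluated at $l^*$ yields (i) $l^{*2}/k \sim k^5/n^4$, (ii) $l^{*1/16}/k^{9/50} \sim k^{3/400}/n^{1/8}$ (using $3/16 - 9/50 = 3/400$), and (iii) $1/l^* = 12 n^2/k^3 \le 12 n^{2/3}/k$ since $k \ge n^{2/3}$; the $O(a^{-1}) = O(n^{2/3}/k)$ from Lemma \ref{le:opt} and the $O(n/k^{3/2}) \le O(n^{2/3}/k)$ Riemann-sum error also fit within the stated bound. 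The main obstacle I anticipate is verifying that the small-$l$ terms (where $E_l$ is $O(1)$) and the high-$l$ tail near $L$ do not overwhelm the uniform Laplace estimate, for which the range constraints $k^3/n^2 \le L \le 4 k^{1/2}$ and $k \ge n^{2/3}$ are crucial.
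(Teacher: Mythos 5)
Your proposal follows essentially the same route as the paper's proof: combine \eqref{eq:C} and \eqref{eq:gammaasymp}, localise the sum to the window $l\in\bigl[\tfrac{a^3}{12}(1-a^{-4/3}),\tfrac{a^3}{12}(1+a^{-4/3})\bigr]$ with $a=k/n^{2/3}$, evaluate the peak via Lemma \ref{le:opt} together with a sum-to-integral (Gaussian) comparison to obtain $\tfrac{k^{k+1}}{2n}e^{k^3/(24n^2)}$, and control the two tails using monotonicity of $(ea^3/(12l))^{l/2}$, the boundary value $\exp(a^3/24-a^{1/3}/48)$, and a crude bound for very large $l$; your error bookkeeping ($k^5/n^4$, $k^{3/400}/n^{1/8}$, and $n^2/k^3\le n^{2/3}/k$) also matches the paper's. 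The one point you leave open is the regime where $a$ is of constant order (the hypothesis only gives $a\ge 1$): there $l^*=a^3/12$ may be $O(1)$, the window $l^*(1\pm a^{-4/3})$ contains few or no lattice points, and your Riemann-sum relative error $O(1/\sqrt{l^*})$ is not small, so the Laplace analysis does not apply as written. The paper disposes of this case first: since the error term $O(n^{2/3}/k)=O(1/a)$ is then $O(1)$, it suffices to have upper and lower bounds matching up to constant factors, which is exactly Corollary \ref{cor:smallk} (note that $k^{k-2}n$ and $k^{k+1}n^{-1}e^{k^3/(24n^2)}$ are comparable when $a$ is bounded); adding this reduction, so that you may assume $a$ is large in the main argument, makes your proof complete.
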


\begin{proof}
Write $a=k/n^{2/3}$. When $a$ is a constant, the $O(n^{2/3}/k)$ error means that we only need to bound above and below by constants times $k^{k-2}/n$, which we did in Corollary \ref{cor:smallk} (in this case $k^{k+1}/n$ and $k^{k-2}n$ are of the same order). Therefore we may assume that $a$ is large.

Note that $(\frac{e a^3}{12l})^{l/2}$ is increasing in $l$ for $l\le a^3/12$, and decreasing in $l$ for $l\ge a^3/12$. Let $J^- = \lfloor \frac{a^3}{12}(1-a^{-4/3})\rfloor$ and $J^+ = \lfloor \frac{a^3}{12}(1+a^{-4/3})\rfloor$. Clearly
\[\sum_{l=J^-}^{J^+} \frac{C(k,k+l)}{n^l} \le \sum_{l=-1}^{L} \frac{C(k,k+l)}{n^l} \le \sum_{l=J^-}^{J^+} \frac{C(k,k+l)}{n^l} + \sum_{l=-1}^{J^-} \frac{C(k,k+l)}{n^l} + \sum_{l=J^+}^\infty \frac{C(k,k+l)}{n^l}.\]
We start by estimating $\sum_{l=J^-}^{J^+} \frac{C(k,k+l)}{n^l}$, and then show that the other two terms on the right-hand side are small.

By \eqref{eq:C} and \eqref{eq:gammaasymp}, for $l\in [J^-,J^+]$,
\[C(k,k+l) = \Big(\frac{3}{4\pi}\Big)^{1/2} k^{k-1/2} \Big(\frac{ek^3}{12 l}\Big)^{l/2} (1+O(\tfrac{n^2}{k^3}+\tfrac{k^5}{n^4}+\tfrac{k^{3/400}}{n^{1/8}})).\]
Thus
\begin{equation}\label{eq:sumJ}
\sum_{l=J^-}^{J^+} \frac{C(k,k+l)}{n^l} = (1+O(\tfrac{n^2}{k^3}+\tfrac{k^5}{n^4}+\tfrac{k^{3/400}}{n^{1/8}}))\Big(\frac{3}{4\pi}\Big)^{1/2} k^{k-1/2} \sum_{l=J^-}^{J^+} \Big(\frac{e a^3}{12 l}\Big)^{l/2}.
\end{equation}

Now, note that $(\frac{e a^3}{12x})^{x/2}$ is increasing for $x<a^3/12$ and decreasing for $x>a^3/12$. By considering these two regions separately, and using the facts that
\[\int_{l-1}^l f(x)dx \le f(l) \le \int_l^{l+1} f(x) dx \hspace{2mm} \hbox{ if $f$ is increasing on } (l-1,l+1)\]
and
\[\int_{l}^{l+1} f(x)dx \le f(l) \le \int_{l-1}^{l} f(x) dx \hspace{2mm} \hbox{ if $f$ is decreasing on } (l-1,l+1)\]
we have
\[\int_{J^-+1}^{J^+-1} \left(\frac{e a^3}{12x}\right)^{x/2} dx - 2 e^{a^3/24} \le \sum_{l=J^-+1}^{J^+-1} \left(\frac{e a^3}{12l}\right)^{l/2} \le \int_{J^-}^{J^+} \left(\frac{e a^3}{12x}\right)^{x/2} dx + 2 e^{a^3/24}.\]
But by Lemma \ref{le:opt},
\begin{align*}
\int_{J^-}^{J^+} \left(\frac{e a^3}{12x}\right)^{x/2} dx &= (1+O(a^{-1}))\frac{a^3}{12} \int_{12 J^-/a^3-1}^{12J^+/a^3-1} e^{a^3/24-a^3 y^2/48} dy\\
&= (1+O(a^{-1}))\frac{a^3}{12} e^{a^3/24} \int_{-\infty}^{\infty} e^{-a^3 y^2/48} dy\\
&= (1+O(a^{-1}))\frac{a^3}{12} e^{a^3/24} (48\pi/a^3)^{1/2}
\end{align*}
and similarly for $\int_{J^-+1}^{J^+-1} (\frac{e a^3}{12x})^{x/2} dx$ (using the assumption, from the beginning of the proof, that $a$ is large). Plugging this estimate into \eqref{eq:sumJ} gives
\[\sum_{l=J^-}^{J^+} \frac{C(k,k+l)}{n^l} = \frac{k^{k+1}}{2n} \exp\Big(\frac{k^3}{24 n^2}\Big) (1+O(\tfrac{n^{2/3}}{k}+\tfrac{k^5}{n^4}+\tfrac{k^{3/400}}{n^{1/8}})),\]
so it now suffices to show that both $\sum_{l=-1}^{J^-} \frac{C(k,k+l)}{n^l}$ and $\sum_{l=J^+}^\infty \frac{C(k,k+l)}{n^l}$ are relatively small.

By Lemma \ref{le:opt},
\[\sum_{l=1}^{J^-} \left(\frac{ea^3}{12 l}\right)^{l/2} \le \frac{a^3}{12} \exp\left(\frac{a^3}{24} - \frac{a^{1/3}}{48} + O(a^{-1})\right).\]
Similarly,
\[\sum_{l= J^+}^{\lfloor a^3\rfloor} \left(\frac{ea^3}{12 l}\right)^{l/2} \le a^3 \exp\left(\frac{a^3}{24} - \frac{a^{1/3}}{48} + O(a^{-1})\right),\]
and trivially
\[\sum_{l=\lfloor a^3\rfloor}^\infty \left(\frac{e}{12}\right)^{l/2} \le \frac{1}{2}e^{-a^3/2}.\]
Combining these bounds with \eqref{eq:C} and \eqref{eq:gammaasymp}, we get the result.
\end{proof}

\section{More first moment asymptotics}\label{sec:more1mom}

In this section our main aim is to prove Propositions \ref{prop:Y} and \ref{prop:Z}. For $k\in\mathbb{N}$, we recall that $Y(k)$ is the number of components of size exactly $k$, and for $k\le n^{3/4}$, $Z(k)$ is the number of components of size between $k$ and $n^{3/4}$. Propositions \ref{prop:Y} and \ref{prop:Z} give first moment asymptotics for $Y(k)$ and $Z(k)$ respectively. 

To prove Proposition \ref{prop:Y}, we write
\[\Enl[Y(k)]=\sum_{l\ge -1} \Enl[X(k,k+l)].\]
We will use Proposition \ref{prop:main} and Lemma \ref{le:lsumasymp} to bound the sum for small $l$, but first we need a bound on the sum for large $l$.

\begin{lem}\label{le:lbig}
There exists a finite constant $c$ such that if $n\ge 25$, $|\lambda|\le n^{1/12}$, $1\le k\le n^{3/4}$ and $L\ge 4n^{1/4}\wedge 4k^{1/3}$,
\[\sum_{l=L}^\infty \Enl[X(k,k+l)] \le c  e^{-L/2}.\]
\end{lem}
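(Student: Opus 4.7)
The plan is to apply the uniform upper bound from the second part of Proposition \ref{prop:main} and then sum the resulting geometric tail.

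First I would dispose of the $l$-independent prefactor. Because $F_\lambda'(a) = \tfrac12(a-\lambda)^2 \ge 0$ and $F_\lambda(0)=0$, we have $F_\lambda(k/n^{2/3}) \ge 0$; and $|\lambda|^3 k/n \le n^{1/4}\cdot n^{3/4}/n = 1$ under the hypotheses $|\lambda|\le n^{1/12}$ and $k \le n^{3/4}$. Hence $e^{-F_\lambda(k/n^{2/3}) + \lambda^3 k/(3n)}$ is bounded by an absolute constant and may be absorbed into $c$.

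The main quantitative step is the observation that $k\le n^{3/4}$ forces $4k^{1/3}\le 4n^{1/4}$, so the hypothesis $L\ge 4n^{1/4}\wedge 4k^{1/3}$ is in fact just $L\ge 4k^{1/3}$. Then for every $l\ge L$,
\[\frac{k^3}{n^2 l} \le \frac{k^3}{n^2 L} \le \frac{k^{8/3}}{4n^2} \le \frac14,\]
so $(k^3/(n^2 l))^{l/2}\le 2^{-l}$. Writing $r := (1+\lambda n^{-1/3})/(1-2/n)$, the remaining factor satisfies $r^l\le e^{l\varepsilon_n}$ with $\varepsilon_n = |\lambda|n^{-1/3} + O(1/n) = O(n^{-1/4})$. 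Plugging these back into the second bound of Proposition \ref{prop:main} gives, for $l\ge L$,
\[\Enl[X(k,k+l)] \le \frac{C}{k}\,e^{-l(\log 2 - \varepsilon_n)}.\]
For $n$ large enough that $\log 2 - \varepsilon_n \ge \tfrac12$, summing the geometric series from $l=L$ yields $\sum_{l\ge L}\Enl[X(k,k+l)]\le c\,e^{-L/2}$.

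The main obstacle is ensuring the bound holds uniformly down to $n\ge 25$ rather than only for all sufficiently large $n$: when $n$ is modest, the estimate $r\le e^{\varepsilon_n}$ is too lossy to give $\log 2 - \varepsilon_n \ge \tfrac12$ directly. A natural refinement is to split on the sign of $\lambda$: when $\lambda\le 0$ we have $(1+\lambda n^{-1/3})^l\le 1$, so $r^l\le (1-2/n)^{-l}\le e^{3l/n}$, which is harmless for $n\ge 25$; when $\lambda>0$, one exploits the strict slack in the bound $(k^3/(n^2 L))^{L/2}\ll 2^{-L}$ whenever $k$ is appreciably smaller than $n^{3/4}$, dealing with the narrow boundary regime $k\approx n^{3/4}$ by a separate direct estimate.
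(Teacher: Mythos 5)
Your argument is essentially the paper's: apply the second bound of Proposition \ref{prop:main}, discard the $l$-independent prefactor using $F_\lambda\ge 0$ and $|\lambda|^3k/n\le 1$, observe that $L\ge 4k^{1/3}$ forces $k^3/(n^2l)\le 1/4$ for $l\ge L$, and sum a geometric series. The only point of divergence is the factor $r^l=\bigl(\tfrac{1+\lambda n^{-1/3}}{1-2/n}\bigr)^l$: the paper simply asserts the numerical bound $r\le 2e^{-1/2}$, so that each term is at most $e^{-l/2}$ and the tail sum is at most $3e^{-L/2}$, whereas you use $r\le e^{\varepsilon_n}$ and hence only obtain the claim for $n$ beyond some threshold. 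The loose end you flag is real, but it does not require the case analysis you sketch (sign of $\lambda$, a ``separate direct estimate'' near $k\approx n^{3/4}$), which you have not actually carried out; it closes trivially. For any fixed $N_0$, in the range $25\le n\le N_0$ the sum has only finitely many nonzero terms, since $X(k,k+l)=0$ unless $l\le\binom{k}{2}-k\le n^{3/2}$, and each term is at most $n/k$ (there are at most $n/k$ components of size $k$); so the left-hand side is at most $n^{7/4}$ and vanishes once $L>n^{3/2}$, whence it is bounded by $N_0^{7/4}e^{N_0^{3/2}/2}e^{-L/2}$ and can be absorbed into $c$. (Incidentally, the paper's ``easy to check'' claim that $r\le 2e^{-1/2}$ for all $n\ge25$, $\lambda\le n^{1/12}$ is itself only valid for $n$ in the several hundreds---at $n=25$, $\lambda=25^{1/12}$ one has $r\approx 1.57>2e^{-1/2}\approx 1.21$---so an absorption of the bounded range of $n$ into the constant is needed there as well, and your caution was justified even if your proposed remedy was more complicated than necessary.)
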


\begin{proof}
First we claim that
\[\sum_{l=L}^\infty \Big(\frac{k^3}{n^2 l}\Big)^{l/2} \Big(\frac{1+\lambda n^{-1/3}}{1-2n^{-1}}\Big)^l \le  3e^{-L/2}.\]
It is easy to check that when $\lambda\le n^{1/12}$ and $n\ge 25$, we have $\frac{1+\lambda n^{-1/3}}{1-2n^{-1}} \le 2e^{-1/2}$; and also that when $l\ge L$ and $k\le n^{3/4}$, we have $k^3/(n^2 l) \le 1/4$. Therefore
\[\sum_{l=L}^\infty \Big(\frac{k^3}{n^2 l}\Big)^{l/2} \Big(\frac{1+\lambda n^{-1/3}}{1-2n^{-1}}\Big)^l \le \sum_{l=L}^\infty e^{-l/2} \le 3e^{-L/2}\]
as claimed. But the second part of Proposition \ref{prop:main} tells us that
\[\Enl[X(k,k+l)] \le \frac{c}{k} \Big(\frac{k^3}{n^2 l}\Big)^{l/2} e^{-F_\lambda(k/n^{2/3})+\frac13 \lambda^3 k n^{-1}} \Big(\frac{1+\lambda n^{-1/3}}{1-2n^{-1}}\Big)^l.\]
It is easy to check that $F_\lambda(x)\ge0$ for all $x\ge 0$, and since $|\lambda|\le 1/12$ and $k\le n^{3/4}$ we have $\lambda^3 k n^{-1} \le 1$ so we get the result.
\end{proof}

\begin{proof}[Proof of Proposition \ref{prop:Y}]
Proposition \ref{prop:main} tells us that for $l\le 4n^{1/4}$,
\[\Enl[X(k,k+l)] = \frac{C(k,k+l)}{(2\pi)^{1/2} n^l k^{k+1/2}} e^{-F_\lambda(k/n^{2/3})} (1+O(\Cr{EXerr}(k,n,l,\lambda))).\]
Applying Lemma \ref{le:lsumasymp} with $L=4n^{1/4}$ (which is larger than $k^3/n^2$ since $k\le n^{3/4}$) gives
\[\sum_{l=-1}^{\lfloor 4 n^{1/4}\rfloor} \frac{C(k,k+l)}{n^l} = \frac{k^{k+1}}{2n} e^{k^3/(24n^2)} (1+O(\tfrac{n^{2/3}}{k} + \tfrac{k^5}{n^4}+\tfrac{k^{3/400}}{n^{1/8}})).\]
Combining these and simplifying the error terms, we get
\[\sum_{l=-1}^{\lfloor 4 n^{1/4}\rfloor} \Enl[X(k,k+l)] = \frac{k^{1/2}}{(8\pi)^{1/2} n} e^{-G_\lambda(k/n^{2/3})}(1+O(\Cr{M1err})).\]
Finally, Lemma \ref{le:lbig} gives that for large $n$,
\[\sum_{l=\lceil 4n^{1/4}\rceil}^\infty \Enl[X(k,k+l)] \le c e^{-2n^{1/4}}\]
which can be absorbed into the $O(\Cr{M1err})$ term.
\end{proof}

We now move on to proving Proposition \ref{prop:Z}. We will sum $\Enl[Y(j)]$ over $j$ from $k$ to $\lfloor n^{3/4}\rfloor$; we simply need to estimate the resulting sum, which we do in the next two lemmas.

\begin{lem}\label{le:Zcalcint}
For $a\ge 1\vee 3\lambda$ and any constant $r\ge 0$, 
\[\int_a^\infty y^{r} e^{-G_\lambda(y)} dy = \frac{a^{r}}{G_\lambda'(a)}e^{-G_\lambda(a)}(1 + O(\tfrac{1}{a^3})).\]
\end{lem}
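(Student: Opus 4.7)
The plan is to prove this via a single integration by parts that extracts the main term exactly and then bounds the remainder by $O(a^{-3})$ times the original integral. Writing $H(y) = y^r/G_\lambda'(y)$ and using that $G_\lambda'(y) e^{-G_\lambda(y)}$ is the derivative of $-e^{-G_\lambda(y)}$, we obtain
\[\int_a^\infty y^r e^{-G_\lambda(y)}\, dy = \frac{a^r}{G_\lambda'(a)} e^{-G_\lambda(a)} + \int_a^\infty H'(y)\, e^{-G_\lambda(y)}\, dy,\]
with the boundary term at infinity vanishing since $G_\lambda(y) \sim y^3/8$ dominates every polynomial. The task then reduces entirely to bounding the remainder.

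The key tool is the factorisation $G_\lambda'(y) = \tfrac{3}{8}(y - 2\lambda/3)(y - 2\lambda)$, which gives
\[\frac{G_\lambda''(y)}{G_\lambda'(y)} = \frac{1}{y - 2\lambda/3} + \frac{1}{y - 2\lambda}.\]
For $y \ge a \ge 1 \vee 3\lambda$, both denominators are at least a universal constant times $y$ (for instance $y - 2\lambda/3 \ge 7y/9$ and $y - 2\lambda \ge y/3$ when $\lambda > 0$, with better bounds when $\lambda \le 0$), so $G_\lambda''(y)/G_\lambda'(y) \le C/y$. Since $H'(y)/H(y) = r/y - G_\lambda''(y)/G_\lambda'(y)$, this yields $|H'(y)| \le (r+C)\, H(y)/y$ on $[a,\infty)$. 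The same factorisation together with $a \ge 3\lambda$ gives $G_\lambda'(a) \ge c\, a^2$ for a universal $c$, and $G_\lambda'$ is non-decreasing on $[a,\infty)$ because $G_\lambda''(y) = 3y/4 - \lambda \ge 0$ there. Hence for $y \ge a$,
\[\frac{|H'(y)|}{y^r} \le \frac{r+C}{y\, G_\lambda'(y)} \le \frac{r+C}{a\, G_\lambda'(a)} \le \frac{C'}{a^3},\]
and integrating against $e^{-G_\lambda(y)}$ bounds the remainder by $(C'/a^3) \int_a^\infty y^r e^{-G_\lambda(y)}\, dy$. Substituting back into the integration-by-parts identity and solving for the original integral gives the claim for $a$ beyond some universal threshold $A_0$; for $a$ in the compact range $[1, A_0]$ the statement is trivial, since $1/a^3$ is then bounded below by a positive constant and both sides are positive and comparable.

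The main obstacle, such as it is, is making the $O(a^{-3})$ constant uniform in $\lambda$, which is allowed to vary (and even grow with $n$). The factorisation of $G_\lambda'$ together with the hypothesis $a \ge 3\lambda$ is precisely what makes the uniformity go through without needing to split into cases based on the sign or size of $\lambda$ beyond the two easy inequalities $y - 2\lambda/3 \gtrsim y$ and $y - 2\lambda \gtrsim y$.
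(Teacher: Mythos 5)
Your argument is essentially the paper's own proof: the same integration by parts with $H(y)=y^r/G'_\lambda(y)$ against $G'_\lambda(y)e^{-G_\lambda(y)}$, followed by showing the remainder integrand is $O(a^{-3})$ times $y^r e^{-G_\lambda(y)}$ using $G'_\lambda(a)\gtrsim a^2$ under $a\ge 1\vee 3\lambda$, and solving for the integral. The only difference is that you spell out, via the factorisation $G'_\lambda(y)=\tfrac38(y-2\lambda/3)(y-2\lambda)$ and the monotonicity of $G'_\lambda$, the uniform-in-$\lambda$ estimates the paper dismisses as ``straightforward to check,'' and you note the small-$a$ regime explicitly; this is correct and complete.
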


\begin{proof}
Writing $y^{r} e^{-G_\lambda(y)}$ as $(y^{r}/G'_\lambda(y))\cdot G'_\lambda(y) e^{-G_\lambda(y)}$ and integrating by parts, we get
\[\int_a^\infty y^{r} e^{-G_\lambda(y)} dy = \frac{a^{r}}{G'_\lambda(a)}e^{-G_\lambda(a)} + \int_a^\infty \Big(\frac{r}{y G'_\lambda(y)} - \frac{G_\lambda''(y)}{G_\lambda'(y)^2}\Big) y^{r}e^{-G_\lambda(y)} dy.\]
Note that $G'_\lambda(y) = 3y^2/8 - \lambda y + \lambda^2/2$, which is strictly positive for $y>2\lambda$, and moreover if $y\ge a$ and $\lambda \le a/3$, we have $1/G'_\lambda(a) = O(1/a^2)$. It is then straightforward to check that under the same conditions we have
\[\frac{r}{y G'_\lambda(y)} - \frac{G_\lambda''(y)}{G_\lambda'(y)^2} = O\Big(\frac{1}{a^3}\Big).\]
Thus
\[\Big(1+O\Big(\frac{1}{a^3}\Big)\Big) \int_a^\infty y^{r} e^{-G_\lambda(y)} dy = \frac{a^{r}}{G'_\lambda(a)}e^{-G_\lambda(a)}\]
and the result follows.
\end{proof}

\begin{lem}\label{le:Zsumtoint}
For $(1\vee 3\lambda) n^{2/3}\le k \le n^{3/4}$ and $|\lambda|\le n^{1/12}$, and any constant $r\ge0$,
\[\frac{1}{n^{2/3}}\sum_{j=k}^{\lfloor n^{3/4}\rfloor} \Big(\frac{j}{n^{2/3}}\Big)^{r} e^{-G_\lambda(j/n^{2/3})} = \frac{(k/n^{2/3})^{r}}{G'_\lambda(k/n^{2/3})} e^{-G_\lambda(k/n^{2/3})} (1+O(\tfrac{n^2}{k^3})) + O(n^{(r-2)/12}e^{-G_\lambda(n^{1/12})}).\]
\end{lem}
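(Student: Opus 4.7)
The plan is to view the sum as a Riemann sum for $\int f(y)\, dy$ with $f(y) = y^{r} e^{-G_\lambda(y)}$ and mesh $\epsilon := n^{-2/3}$, and then apply Lemma \ref{le:Zcalcint} to the resulting integrals. Set $a = k/n^{2/3}$ and $N = \lfloor n^{3/4}\rfloor$, so that the sum equals $\epsilon\sum_{j=k}^{N} f(j\epsilon)$ and $N\epsilon = n^{1/12} + O(\epsilon)$.

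The first key step is to establish that $f$ is eventually monotone decreasing on the range of interest. A direct calculation gives $f'(y) = y^{r-1}(r - y G'_\lambda(y))e^{-G_\lambda(y)}$, and the elementary bound $G'_\lambda(y) \ge y^{2}/24$ for $y \ge 1\vee 3\lambda$ (verified by treating $\lambda \ge 0$ and $\lambda < 0$ separately) implies that $f$ is decreasing on $[(24r)^{1/3},\infty)$. Assuming $a \ge (24r)^{1/3}$, monotonicity yields
\[\int_{a}^{(N+1)\epsilon} f(y)\,dy \;\le\; \epsilon\sum_{j=k}^{N} f(j\epsilon) \;\le\; \epsilon f(a) + \int_{a}^{N\epsilon} f(y)\,dy.\]
I would then rewrite each bounding integral as $\int_{a}^{\infty} f - \int_{M\epsilon}^{\infty} f$ (with $M\in\{N,N+1\}$) and apply Lemma \ref{le:Zcalcint} twice: once with lower limit $a$ to obtain the desired leading term $\frac{a^{r}}{G'_\lambda(a)}e^{-G_\lambda(a)}(1+O(a^{-3}))$, and once with lower limit $M\epsilon \approx n^{1/12}$ to obtain $O(n^{(r-2)/12}e^{-G_\lambda(n^{1/12})})$, using the easy estimate $G'_\lambda(n^{1/12}) = \Theta(n^{1/6})$ that follows from the constraint $\lambda \le n^{1/12}/3$ (which is implicit in the non-emptiness of the stated range for $k$).

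It then remains to absorb the boundary contribution $\epsilon f(a)$ into the main error. Its relative size compared to $\frac{a^{r}}{G'_\lambda(a)}e^{-G_\lambda(a)}$ is $\epsilon G'_\lambda(a) = O(a^{2}/n^{2/3})$, which is $O(a^{-3})$ precisely when $a^{5} \le n^{2/3}$; and this holds because $a \le n^{1/12}$ forces $a^{5} \le n^{5/12}$. The residual regime $a < (24r)^{1/3}$ (where $f$ need not be monotone on all of $[a,\infty)$) is handled separately: both sides of the claim are then $\Theta(1)$, and a crude total-variation Riemann sum bound gives an $O(\epsilon) = O(n^{-2/3})$ error, trivially absorbed in the factor $(1+O(a^{-3}))$. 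The main obstacle is therefore not any single step but the precise bookkeeping: verifying that the discretization error $\epsilon G'_\lambda(a)$, the endpoint perturbation $N\epsilon$ vs.\ $n^{1/12}$, and the tail $\int_{n^{1/12}}^{\infty} f$ all fit inside the two prescribed error terms uniformly across the admissible $(k,\lambda,n)$ regime.
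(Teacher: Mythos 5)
Your overall strategy is the same as the paper's: compare the sum to an integral and then invoke Lemma \ref{le:Zcalcint} twice, once at $a=k/n^{2/3}$ for the main term and once near $n^{1/12}$ for the tail, absorbing a single boundary term of relative size $G'_\lambda(a)/n^{2/3}$ using $a\le n^{1/12}$. The one genuine difference is the mechanism of the sum--integral comparison: the paper never requires the full integrand $y^r e^{-G_\lambda(y)}$ to be monotone, but instead uses that $x^r$ is increasing while $e^{-G_\lambda(x/n^{2/3})}$ is decreasing for $x\ge 2\lambda n^{2/3}$, comparing with the shifted integrands $((x\mp1)/n^{2/3})^r e^{-G_\lambda(\cdot)}$ and paying only a factor $(1+O(1/k))$; this works uniformly for all $a\ge 1\vee 3\lambda$ and avoids any case analysis in $a$. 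Your route, requiring $f$ itself to be decreasing, forces the extra split at $a=(24r)^{1/3}$.

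That split is where your write-up has a flaw. In the residual regime $a<(24r)^{1/3}$ you assert that ``both sides of the claim are $\Theta(1)$,'' but this is false when $\lambda$ is negative and large in magnitude (allowed up to $|\lambda|=n^{1/12}$ with $a$ as small as $1$): then $G_\lambda(a)=\tfrac{a}{8}(a-2\lambda)^2$ is of order $a\lambda^2$, so both sides are exponentially small and an additive $O(n^{-2/3})$ Riemann-sum error cannot be absorbed into a relative error. The repair is easy: for $y\ge 1$ one has $yG'_\lambda(y)\ge \lambda^2/2$ when $\lambda<0$, so if $|\lambda|\ge\sqrt{2r}$ the integrand is decreasing on $[a,\infty)$ regardless of the size of $a$, and the genuinely non-monotone residual case has both $a=O_r(1)$ and $|\lambda|=O_r(1)$, where your $\Theta(1)$ argument is valid; the case split should be drawn accordingly. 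A related (harmless, and shared with the paper's own wording) imprecision: $G'_\lambda(a)=O(a^2)$ fails for strongly negative $\lambda$, so your bound $\epsilon G'_\lambda(a)=O(a^2/n^{2/3})$ is not literally correct in that regime; the needed conclusion $\epsilon G'_\lambda(a)=O(a^{-3})$ still holds because $G'_\lambda(a)=O(n^{1/6})$ while $a^{-3}\ge n^{-1/4}$. With these adjustments your argument goes through and matches the paper's error terms.
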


\begin{proof}
Write $N = \lfloor n^{3/4}\rfloor$ and
\[S=\frac{1}{n^{2/3}}\sum_{j=k}^N \Big(\frac{j}{n^{2/3}}\Big)^{r} e^{-G_\lambda(j/n^{2/3})}.\]
For a lower bound, since $x^{r}$ is increasing in $x$ and $e^{-G_\lambda(x/n^{2/3})}$ is decreasing in $x$ for $x\ge 2\lambda n^{2/3}$, for any $j\ge k$ we have
\[\Big(\frac{j}{n^{2/3}}\Big)^{r} e^{-G_\lambda(j/n^{2/3})} \ge \Big(\frac{x-1}{n^{2/3}}\Big)^{r} e^{-G_\lambda(x/n^{2/3})} \hspace{3mm} \forall x\in[j,j+1)\]
and therefore
\[S \ge \frac{1}{n^{2/3}}\int_k^N \Big(\frac{x-1}{n^{2/3}}\Big)^{r} e^{-G_\lambda(x/n^{2/3})} dx.\]
Now $(x-1)^{r} = x^{r}(1 + O(1/x))$, so
\[S \ge (1+O(\tfrac{1}{k}))\frac{1}{n^{2/3}}\int_k^N \Big(\frac{x}{n^{2/3}}\Big)^{r} e^{-G_\lambda(x/n^{2/3})} dx.\]
Substituting $y=x/n^{2/3}$, writing $a=k/n^{2/3}$ and applying Lemma \ref{le:Zcalcint}, we get
\[S \ge \frac{a^{r}}{G_\lambda'(a)} e^{-G_\lambda(a)}(1+O(\tfrac{1}{a^3}+\tfrac{1}{k})) - \frac{n^{r/12}}{G_\lambda'(n^{1/12})} e^{-G_\lambda(n^{1/12})}(1+O(1)).\]
Noting that $G'_\lambda(x)$ is of order $x^2$ for all $x\ge 1\vee 3\lambda$ gives the desired lower bound ($1/k\le 1/a^3$ so we may ignore that error term). We proceed similarly for the upper bound. Again since $x^{r}$ is increasing in $x$ and $e^{-G_\lambda(x/n^{2/3})}$ is decreasing in $x$ for $x\ge 2\lambda n^{2/3}$, for any $j\ge k$ we have
\[\Big(\frac{j}{n^{2/3}}\Big)^{r} e^{-G_\lambda(j/n^{2/3})} \le \Big(\frac{x}{n^{2/3}}\Big)^{r} e^{-G_\lambda((x-1)/n^{2/3})} \hspace{3mm} \forall x\in[j,j+1)\]
and therefore
\begin{align*}
S &\le \frac{1}{n^{2/3}}\Big(\frac{k}{n^{2/3}}\Big)^{r} e^{-G_\lambda(k/n^{2/3})} + \frac{1}{n^{2/3}}\int_{k+1}^{N+1} \Big(\frac{x}{n^{2/3}}\Big)^{r} e^{-G_\lambda((x-1)/n^{2/3})} dx\\
&= \frac{1}{n^{2/3}}\Big(\frac{k}{n^{2/3}}\Big)^{r} e^{-G_\lambda(k/n^{2/3})} + \frac{1}{n^{2/3}}\int_{k}^{N} \Big(\frac{x+1}{n^{2/3}}\Big)^{r} e^{-G_\lambda(x/n^{2/3})} dx\\
&= \frac{1}{n^{2/3}}\Big(\frac{k}{n^{2/3}}\Big)^{r} e^{-G_\lambda(k/n^{2/3})} + (1+O(\tfrac{1}{k}))\frac{1}{n^{2/3}}\int_{k}^{N} \Big(\frac{x}{n^{2/3}}\Big)^{r} e^{-G_\lambda(x/n^{2/3})} dx.
\end{align*}
Now again substituting $y=x/n^{2/3}$, writing $a=k/n^{2/3}$ and applying Lemma \ref{le:Zcalcint}, we have
\[\frac{1}{n^{2/3}}\int_{k}^{N} \Big(\frac{x}{n^{2/3}}\Big)^{r} e^{-G_\lambda(x/n^{2/3})} dx = \frac{a^{r}}{G'_\lambda(a)} e^{-G_\lambda(a)} (1+O(\tfrac{1}{a^3})) - \frac{n^{r/12}}{G_\lambda'(n^{1/12})} e^{-G_\lambda(n^{1/12})}(1+O(1)).\]
Again we use the fact that $G'_\lambda(x)$ is of order $x^2$ for $x \ge 1\vee 3\lambda$, which means that both
\[\frac{n^{r/12}}{G'_\lambda(n^{1/12})} = O(n^{(r-2)/12}) \hspace{5mm} \hbox{ and } \hspace{5mm} \frac{1}{n^{2/3}}= O\Big(\frac{k^2}{n^3}\Big)\frac{1}{G'_\lambda(a)}.\]
These estimates give us
\[S \le O\Big(\frac{k^2}{n^3}\Big)\frac{a^{r}}{G'_\lambda(a)} e^{-G_\lambda(a)} + \frac{a^{r}}{G'_\lambda(a)} e^{-G_\lambda(a)}(1+O(\tfrac{1}{a^3})) + O(n^{(r-2)/12}e^{-G_\lambda(n^{1/12})}).\]
Noting that $k^2/n^3 \le 1/a^3$, we can drop that error term and the proof is complete.
\end{proof}

\begin{proof}[Proof of Proposition \ref{prop:Z}]
Recall that $Z(k)$ is the number of components of size between $k$ and $n^{3/4}$. By Proposition \ref{prop:Y},
\[\Enl[Z(k)] = \sum_{j=k}^{\lfloor n^{3/4}\rfloor} \Enl[Y(j)] = \sum_{j=k}^{\lfloor n^{3/4}\rfloor} \frac{j^{1/2}}{(8\pi)^{1/2}n} e^{-G_\lambda(j/n^{2/3})}(1+O(\Cr{M1err}(j,n,\lambda))).\]
By Lemma \ref{le:Zsumtoint} (carefully summing the terms involving $j$ in $\Cr{M1err}(j,n,\lambda)$ by changing the value of $r$ used in Lemma \ref{le:Zsumtoint}), this is
\[\frac{(k/n^{2/3})^{1/2}}{G'_\lambda(k/n^{2/3})} e^{-G_\lambda(k/n^{2/3})} (1+O(\Cr{M1err}(k,n,\lambda))) + O(n^{-1/8}e^{-G_\lambda(n^{1/12})}).\]
It is easily checked that since $\lambda \le n^{1/12}/3$, we have $G_\lambda(n^{1/12}) \ge n^{1/4}/72$ so the last error is of order smaller than $\Cr{A1err}$.
\end{proof}

\section{Second moment bounds}\label{sec:2mom}

We now have the bounds that we want on $\Enl[X(k,k+l)]$, $\Enl[Y(k)]$ and $\Enl[Z(k)]$. To show that these expectations give asymptotically tight bounds on $\Pnl(X(k,k+l)\ge 1)$, $\Pnl(Y(k)\ge 1)$, and $\Pnl(Z(k)\ge 1)$, we need to develop some second moment bounds.

\begin{lem}\label{le:EX2toEX1}
If $k\le n^{3/4}$ then
\[\Enl[X(k,k+l)^2] = \Enl[X(k,k+l)] + \Enl[X(k,k+l)]^2 \exp\left(\frac{\lambda k^2}{n^{4/3}} - \frac{k^3}{n^2}\right) (1+O(\tfrac{k}{n}+\tfrac{k^4}{n^3})).\]
Also, if $j,k\le n^{3/4}$, $n\ge 81$ and either $j\neq k$ or $l\neq l'$, then
\begin{multline*}
\Enl[X(j,j+l)X(k,k+l')]\\
= \Enl[X(j,j+l)]\Enl[X(k,k+l')] \exp\left(\frac{\lambda jk}{n^{4/3}} - \frac{jk}{2n^2}(j+k)\right) (1+O(\tfrac{j+k}{n}+\tfrac{j^4+k^4}{n^3})).
\end{multline*}
\end{lem}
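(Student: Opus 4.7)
The natural starting point is to write $X(k,k+l)^2$ as a double sum of indicators: for each pair $((S,H),(S',H'))$ where $S,S'$ are $k$-vertex subsets and $H,H'$ are connected graphs on them with $k+l$ edges, an indicator that each is a component. Splitting according to whether $(S,H)=(S',H')$, whether $S=S'$ with $H\ne H'$, whether $S\ne S'$ but $S\cap S'\ne\emptyset$, or $S\cap S'=\emptyset$: the first case contributes $\Enl[X(k,k+l)]$, the next two are zero (two different components cannot share a vertex), and the last case contributes $\binom{n}{k}\binom{n-k}{k}C(k,k+l)^2 \pnl^{2(k+l)}(1-\pnl)^{E_2}$, where $E_2$ counts the edges that must be absent: two sets of internal non-edges $2[\binom{k}{2}-(k+l)]$, the $k^2$ edges between the two clusters, and $2k(n-2k)$ edges from each cluster to the outside. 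Comparing $E_2$ with twice the single-component exponent $E_1=\binom{k}{2}-(k+l)+k(n-k)$ gives $E_2 = 2E_1 - k^2$; hence the off-diagonal sum equals $\Enl[X(k,k+l)]^2 \cdot \tfrac{\binom{n-k}{k}}{\binom{n}{k}} (1-\pnl)^{-k^2}$.

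It then remains to estimate $R_k := \tfrac{\binom{n-k}{k}}{\binom{n}{k}}(1-\pnl)^{-k^2}$. I will take logarithms and Taylor-expand. Writing $\log \tfrac{\binom{n-k}{k}}{\binom{n}{k}} = \sum_{i=0}^{k-1}\log\bigl(1-\tfrac{k}{n-i}\bigr)$, expanding the inner log to three terms, and using $\sum_{i=0}^{k-1}(n-i)^{-1} = \tfrac{k}{n}+\tfrac{k^2}{2n^2}+O(k^3/n^3)$ and $\sum_{i=0}^{k-1}(n-i)^{-2} = \tfrac{k}{n^2}+O(k^2/n^3)$, the leading terms collect to $-\tfrac{k^2}{n} - \tfrac{k^3}{n^2}$ up to errors of order $k^4/n^3$ and $k^2/n^2$. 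Similarly $-k^2\log(1-\pnl) = k^2\pnl + O(k^2\pnl^2) = \tfrac{k^2}{n} + \tfrac{\lambda k^2}{n^{4/3}} + O(k^2/n^2)$. The $\pm k^2/n$ cancel, leaving $\log R_k = \tfrac{\lambda k^2}{n^{4/3}} - \tfrac{k^3}{n^2} + O(k/n + k^4/n^3)$, where I have absorbed $k^2/n^2$ into $k/n$ and $k^4/n^3$ (using $k\le n^{3/4}$). Exponentiating gives the claimed single-variable formula.

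The two-variable case proceeds identically. The only pairs that contribute to $\Enl[X(j,j+l)X(k,k+l')]$ (when $(j,l)\ne(k,l')$) are disjoint, and computing the $(1-\pnl)$-exponent $E_2'$ and comparing with $E_1(j)+E_1(k)$ gives $E_1(j)+E_1(k)-E_2' = jk$, so $\Enl[X(j,j+l)X(k,k+l')] = \Enl[X(j,j+l)]\Enl[X(k,k+l')] \cdot \tfrac{\binom{n-j}{k}}{\binom{n}{k}}(1-\pnl)^{-jk}$. The same log expansion, now with $\log\bigl(1-\tfrac{j}{n-i}\bigr)$, produces $-\tfrac{jk}{n}-\tfrac{jk(j+k)}{2n^2}$ plus errors, while $-jk\log(1-\pnl) = \tfrac{jk}{n}+\tfrac{\lambda jk}{n^{4/3}}+O(jk/n^2)$; the $jk/n$ terms cancel and the surviving exponent is $\tfrac{\lambda jk}{n^{4/3}}-\tfrac{jk(j+k)}{2n^2}$, with error $O(\tfrac{j+k}{n} + \tfrac{j^4+k^4}{n^3})$ after using $jk(j+k)^2 \le 8(j^4+k^4)$ and $jk/n^2\le (j+k)^2/(4n^2) \le (j+k)/(4n)$.

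The only real obstacle is bookkeeping: identifying precisely which cubic terms survive the cancellation in $\log R_k$ (it is crucial that both $-\tfrac{k^2(k-1)}{2n^2}$ from the binomial and $-\tfrac{k\cdot k^2}{2n^2}$ from the second-order log term combine to $-\tfrac{k^3}{n^2}$), and checking that the discarded error terms can be packaged into $O(k/n + k^4/n^3)$ under the hypothesis $k\le n^{3/4}$. The hypothesis $n\ge 81$ in the two-variable statement is needed to guarantee $j/(n-i)$ and $k/(n-i)$ are small enough that the Taylor expansion of $\log(1-\cdot)$ is valid with the indicated remainders.
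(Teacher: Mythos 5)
Your proposal is correct and follows essentially the same route as the paper: the same decomposition of the second moment into the diagonal term plus disjoint pairs, and the same reduction to estimating $\binom{n-j}{k}\binom{n}{k}^{-1}(1-\pnl)^{-jk}$, with the identities $E_2=2E_1-k^2$ and $E_1(j)+E_1(k)-E_2'=jk$ matching the paper's comparison with \eqref{eq:EX}. The only (harmless) difference is that you estimate the binomial ratio by expanding $\log\prod_{i=0}^{k-1}\bigl(1-\tfrac{j}{n-i}\bigr)$ directly, whereas the paper uses Stirling's formula and expands the three factors $(1-j/n)^{n-j}$, $(1+\tfrac{j}{n-j-k})^{n-j-k}$, $(1-k/n)^{j}$; both yield $\exp\bigl(-\tfrac{jk}{n}-\tfrac{jk}{2n^2}(j+k)\bigr)$ with the stated error.
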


\begin{proof}
Let $\mathcal C(k,k+l)$ be the set of components with $k$ vertices and $k+l$ edges. First note that
\[\Enl[X(k,k+l)^2] = \Enl\bigg[\sum_{|S|=k} \sum_{|S'|=k} \ind_{\{S\in \mathcal C(k,k+l)\}}\ind_{\{S'\in \mathcal C(k,k+l)\}}\bigg].\]
If $S$ is a component, then $S$ and $S^c$ are not connected by an edge; therefore if $S$ and $S'$ are components with $S\cap S'\neq\emptyset$, then necessarily $S=S'$. Thus
\[\Enl[X(k,k+l)^2] = \Enl\bigg[\sum_{|S|=k} \ind_{\{S\in \mathcal C(k,k+l)\}}\bigg] + \Enl\bigg[\sum_{|S|=k} \sum_{\substack{|S'|=k,\\ S'\cap S=\emptyset}} \ind_{\{S\in \mathcal C(k,k+l)\}}\ind_{\{S'\in \mathcal C(k,k+l)\}}\bigg]\]
and if either $j\neq k$ or $l\neq l'$,
\[\Enl[X(j,j+l)X(k,k+l')] = \Enl\bigg[\sum_{|S|=j} \sum_{\substack{|S'|=k,\\ S'\cap S=\emptyset}} \ind_{\{S\in \mathcal C(j,j+l)\}}\ind_{\{S'\in \mathcal C(k,k+l')\}}\bigg].\]
By the argument that follows \eqref{eq:EX}, this last quantity equals (for any $j,k,l,l'$)
\[\binom{n}{j} \binom{n-j}{k} C(j,j+l)C(k,k+l') \pnl^{j+l+k+l'} (1-\pnl)^{\binom{j}{2} + \binom{k}{2} - j-l-k-l' + j(n-j) + k(n-j-k)}.\]
By comparing with the formula \eqref{eq:EX} for $\Enl[X(k,k+l)]$, we see that
\begin{equation*}
\Enl[X(k,k+l)^2] = \Enl[X(k,k+l)] + \Enl[X(k,k+l)]^2 \frac{\binom{n-k}{k}}{\binom{n}{k}} (1-\pnl)^{-k^2}
\end{equation*}
and if either $j\neq k$ or $l\neq l'$, then
\begin{equation*}
\Enl[X(j,j+l)X(k,k+l')] = \Enl[X(j,j+l)]\Enl[X(k,k+l')] \frac{\binom{n-j}{k}}{\binom{n}{k}} (1-\pnl)^{-jk}.
\end{equation*}
Therefore it suffices to show that for any $j,k\le n^{3/4}\wedge{\frac{n}{3}}$,
\begin{equation}\label{eq:quotientreq}
\frac{\binom{n-j}{k}}{\binom{n}{k}} (1-\pnl)^{-jk} = \exp\left(\frac{\lambda jk}{n^{4/3}} - \frac{jk}{2n^2}(j+k)\right) (1+O(\tfrac{j+j'}{n}+\tfrac{j^4+k^4}{n^3})).
\end{equation}

Without loss of generality we may assume that $n\ge 81$ and then since $j,k\le n^{3/4}$ we also have $j,k\le n/3$. Stirling's formula tells us that
\begin{align*}
\frac{\binom{n-j}{k}}{\binom{n}{k}} = \frac{(n-j)!(n-k)!}{(n-j-k)!n!} &= \frac{(n-j)^{n-j+1/2}(n-k)^{n-k+1/2}}{n^{n+1/2}(n-j-k)^{n-j-k+1/2}}(1+O(1/n))\\
&= \left(1-\frac{j}{n}\right)^{n-j}\left(1+\frac{j}{n-j-k}\right)^{n-j-k}\left(1-\frac{k}{n}\right)^j (1+O(\tfrac{j+k}{n})).
\end{align*}
Using the expansion \eqref{log},
\[\Big(1-\frac{j}{n}\Big)^{n-j} = \exp\Big(-j + \frac{j^2}{2n} + \frac{j^3}{6n^2} + O(j^4/n^3)\Big),\]
\begin{align*}
\Big(1+\frac{j}{n-j-k}\Big)^{n-j-k} &= \exp\Big(j - \frac{j^2}{2(n-j-k)} + \frac{j^3}{3(n-j-k)^2} + O(\tfrac{j^4}{n^3})\Big)\\
&= \exp\Big(j - \frac{j^2}{2n} - \frac{j^2}{2n^2}(j+j') + \frac{j^3}{3n^2} + O(\tfrac{j^4}{n^3})\Big),
\end{align*}
and
\[\Big(1-\frac{k}{n}\Big)^j = \exp\Big(-\frac{jk}{n} - \frac{jk^2}{2n^2} + O(\tfrac{jk^3}{n^3})\Big).\]
Combining these, we get
\[\frac{\binom{n-j}{k}}{\binom{n}{k}} = \exp\Big(-\frac{jk}{n} - \frac{jk}{2n^2}(j+k)\Big)(1+O(\tfrac{j+k}{n}+\tfrac{j^4+k^4}{n^3})).\]
On the other hand,
\[(1-\pnl)^{-jk} = \exp\left(\frac{jk}{n} + \frac{\lambda jk}{n^{4/3}} + O(\tfrac{jk}{n^2})\right).\]
These two approximations establish \eqref{eq:quotientreq} and complete the proof.
\end{proof}

For $k\in\mathbb{N}$, recall that we defined $Y(k)$ to be the number of components of size exactly $k$, and for $k\le n^{3/4}$, we let $Z(k)$ be the number of components of size between $k$ and $n^{3/4}$. We can use Lemma \ref{le:EX2toEX1} to bound the second moments of $Y(k)$ and $Z(k)$.

\begin{lem}\label{le:2mom}
If $k\le n^{3/4}$, then
\[\Enl[Y(k)^2] = \Enl[Y(k)] + \Enl[Y(k)]^2 \exp\left(\frac{\lambda k^2}{n^{4/3}} - \frac{k^3}{n^2}\right) (1+O(\tfrac{k}{n}+\tfrac{k^4}{n^3}))\]
and for $j\neq k$ with $j\le n^{3/4}$,
\[\Enl[Y(j)Y(k)] = \Enl[Y(j)]\Enl[Y(k)] \exp\left(\frac{\lambda jk}{n^{4/3}} - \frac{jk}{2n^2}(j+k)\right) (1+O(\tfrac{j+k}{n}+\tfrac{j^4+k^4}{n^3})).\]
Further, if $\lambda n^{2/3} \le k\le n^{3/4}$, then
\[\Enl[Z(k)^2] \le \Enl[Z(k)] + \Enl[Z(k)]^2 \exp\left(\frac{\lambda k^2}{n^{4/3}} - \frac{k^3}{n^2}\right) (1+O(n^{-1/11})).\]
\end{lem}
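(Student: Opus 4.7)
For the first two claims, the plan is a direct expansion. Writing $Y(k)=\sum_{l\ge -1}X(k,k+l)$, I split the square into diagonal and off-diagonal contributions:
\[\Enl[Y(k)^2]=\sum_l \Enl[X(k,k+l)^2]+\sum_{l\ne l'}\Enl[X(k,k+l)X(k,k+l')].\]
The two formulas of Lemma \ref{le:EX2toEX1} share the same exponential factor $\exp(\lambda k^2/n^{4/3}-k^3/n^2)$ and the same error $1+O(k/n+k^4/n^3)$, so the extra $\Enl[X(k,k+l)]$ from the diagonal case sums to $\Enl[Y(k)]$, and the remaining $\sum_l\Enl[X(k,k+l)]^2+\sum_{l\ne l'}\Enl[X(k,k+l)]\Enl[X(k,k+l')]$ collapses to $\Enl[Y(k)]^2$. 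The analogous expansion handles $\Enl[Y(j)Y(k)]$ for $j\ne k$, using only the second (cross) formula throughout.

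For the $Z(k)$ claim, I would expand $Z(k)=\sum_{j=k}^{\lfloor n^{3/4}\rfloor}Y(j)$ and apply the first two parts of this lemma termwise. Letting $h(j,j')=\lambda jj'/n^{4/3}-jj'(j+j')/(2n^2)$, this gives
\[\Enl[Z(k)^2]=\Enl[Z(k)]+\sum_{j,j'=k}^{\lfloor n^{3/4}\rfloor}\Enl[Y(j)]\Enl[Y(j')]\,e^{h(j,j')}\bigl(1+O((j+j')/n+(j^4+j'^4)/n^3)\bigr),\]
and $h(k,k)$ is exactly the exponent in the statement. The first step is to verify the monotonicity $h(j,j')\le h(k,k)$ for $j,j'\ge k$ under the hypothesis $\lambda n^{2/3}\le k$: setting $p-k^2=jj'-k^2\ge 0$ and $s-2k=j+j'-2k\ge 0$ gives
\[h(j,j')-h(k,k)=(p-k^2)\Bigl(\frac{\lambda}{n^{4/3}}-\frac{k}{n^2}\Bigr)-\frac{k^2(s-2k)+(p-k^2)(s-2k)}{2n^2},\]
and the hypothesis $\lambda\le k/n^{2/3}$ makes both terms non-positive. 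So the leading contribution is bounded by $\Enl[Z(k)]^2 e^{h(k,k)}$.

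The main obstacle is then controlling the error. The idea is that the weighted mass concentrates near $j=j'=k$ at scale $n^2/k^2$, since the $-k^2(s-2k)/(2n^2)$ term in the identity above gives effective Gaussian decay and $\Enl[Y(j)]$ itself decays through $G_\lambda$; within this window the error $(j+j')/n+(j^4+j'^4)/n^3$ is essentially $2k/n+2k^4/n^3$. For $k\le n^{8/11}$ the quartic part is already $O(n^{-1/11})$, matching the claim, and the $k/n$ part is even smaller ($\le n^{-3/11}$). For larger $k$ (up to $n^{3/4}$) I would split the sum at the threshold $K=n^{8/11}$: within $j,j'\le K$ the previous estimate applies directly, while for $j\ge K$ or $j'\ge K$ the extra decay $e^{h(j,j')-h(k,k)}\lesssim e^{-cK^3/n^2}=e^{-cn^{2/11}}$ (together with the first-moment tail $\sum_{j\ge K}\Enl[Y(j)]=\Enl[Z(K)]$ controlled by Proposition \ref{prop:Z}) more than absorbs the residual $(j^4+j'^4)/n^3\le 2$. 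I expect this splitting, and in particular showing that the tail contribution remains below $n^{-1/11}\Enl[Z(k)]^2 e^{h(k,k)}$ uniformly over the full range of $\lambda$ and $k$, to be the most delicate part.
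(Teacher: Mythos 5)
Your first two identities, and the skeleton of the third (write $Z(k)=\sum_{j=k}^{\lfloor n^{3/4}\rfloor}Y(j)$, apply the first two parts termwise, and use monotonicity of the exponent $h$ under $\lambda n^{2/3}\le k$ to replace $e^{h(j,j')}$ by $e^{h(k,k)}$), follow the paper's argument exactly; your explicit verification that $h(j,j')\le h(k,k)$ for $j,j'\ge k$ is correct and supplies a computation that the paper only asserts.

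The gap is in your error control for the $Z(k)$ bound in the range $n^{8/11}<k\le n^{3/4}$. There the split at $K=n^{8/11}$ is vacuous: every index satisfies $j,j'\ge k>K$, and the asserted decay $e^{h(j,j')-h(k,k)}\lesssim e^{-cK^3/n^2}$ fails precisely where the mass sits, namely at and near the diagonal $j=j'=k$, where this factor equals $1$. Those near-diagonal terms carry the error $O((j^4+j'^4)/n^3)=O(k^4/n^3)$, which at $k\asymp n^{3/4}$ is $O(1)$, not $O(n^{-1/11})$, and no rearrangement of the sum can do better, because your only input (the first two parts of the lemma, i.e.\ Lemma \ref{le:EX2toEX1}) already carries an irreducible $O(k^4/n^3)$ error in each individual term: in the extreme case $k=\lfloor n^{3/4}\rfloor$, $\lambda=0$, the whole double sum is the single term $i=j=k$ and nothing better than $1+O(1)$ can be extracted. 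So your plan establishes the stated error only for $k\le n^{8/11}$. For comparison, the paper does not attempt your refinement at all: it bounds the error factor uniformly by its value at $i=j=\lfloor n^{3/4}\rfloor$, i.e.\ by $1+O(N/n+N^4/n^3)$ with $N=\lfloor n^{3/4}\rfloor$, which is only $1+O(1)$; so the displayed $O(n^{-1/11})$ is not really delivered by the paper's own proof in the upper range of $k$ either, and is exactly what your window estimate gives when $k\le n^{8/11}$. Since the only later use of this bound (Lemma \ref{le:PtoZ}) needs nothing sharper than a constant---all the smallness used there comes from the factor $\Enl[Z(k)]e^{\lambda k^2/n^{4/3}-k^3/n^2}$---the practical resolution is either to state the third bound with error $O(k/n+k^4/n^3)$, which your argument (or the crude uniform bound) does give, or to restrict the $O(n^{-1/11})$ form to $k\le n^{8/11}$; as written, your tail-absorption step is not a proof for $k$ up to $n^{3/4}$.
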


\begin{proof}
We begin with $\Enl[Y(k)^2]$. Clearly
\begin{align*}
\Enl[Y(k)^2] &= \sum_{l,l'\ge -1} \Enl[X(k,k+l)X(k,k+l')]\\
&= \sum_{l\ge -1} \Enl[X(k,k+l)^2] + \sum_{l\ge -1}\sum_{\substack{l'\ge -1,\\ l'\neq l}} \Enl[X(k,k+l)X(k,k+l')].
\end{align*}
By Lemma \ref{le:EX2toEX1}, this equals
\[\sum_{l\ge -1} \Enl[X(k,k+l)] + \sum_{l,l'\ge -1} \Enl[X(k,k+l)]\Enl[X(k,k+l')]\exp\left(\frac{\lambda k^2}{n^{4/3}} - \frac{k^3}{n^2}\right) (1+O(\tfrac{k}{n}+\tfrac{k^4}{n^3})).\]
We now recognise $\Enl[Y(k)]=\sum_{l\ge -1}\Enl[X(k,k+l)]$ and the first part of the lemma follows.

Similarly, if $j\neq k$,
\begin{align*}
&\Enl[Y(j)Y(k)]\\
&\hspace{5mm} = \sum_{l,l'\ge -1}\Enl[X(j,j+l)X(k,k+l)]\\
&\hspace{5mm} = \sum_{l,l'} \Enl[X(j,j+l)]\Enl[X(k,k+l)] e^{\lambda jk/n^{4/3} - jk(j+k)/(2n^2)} (1+O(\tfrac{j+k}{n}+\tfrac{j^4+k^4}{n^3}))\\
&\hspace{5mm} = \Enl[Y(j)]\Enl[Y(k)] e^{\lambda jk/n^{4/3} - jk(j+k)/(2n^2)} (1+O(\tfrac{j+k}{n}+\tfrac{j^4+k^4}{n^3})).
\end{align*}
This establishes the second part of the lemma.

For $\Enl[Z(k)^2]$, we let $N=\lfloor n^{3/4}\rfloor$ and write
\begin{align*}
\Enl[Z(k)^2] &= \sum_{i=k}^N \sum_{j=k}^N \Enl[Y(i)Y(j)]\\
&= \sum_{i=k}^N \E[Y(i)^2] + \sum_{i=k}^N \sum_{\substack{j=k,\ldots,N,\\ j\neq i}} \Enl[Y(i)Y(j)].
\end{align*}
By the first two parts of the lemma, this equals
\[\sum_{i=k}^N \E[Y(i)] + \sum_{i,j=k}^N \E[Y(i)]\E[Y(j)] \exp\left(\frac{\lambda ij}{n^{4/3}} - \frac{ij}{2n^2}(i+j)\right) (1+O(\tfrac{i+i}{n}+\tfrac{i^4+j^4}{n^3})).\]
Noting that if $\lambda n^{2/3} \le k$ then the exponent above is decreasing in $i$ and $j$ for $i,j\ge k$, we get
\[\Enl[Z(k)^2] \le \sum_{i=k}^N \E[Y(i)] + \sum_{i,j=k}^N \E[Y(i)]\E[Y(j)] \exp\left(\frac{\lambda k^2}{n^{4/3}} - \frac{k^3}{n^2}\right) (1+O(\tfrac{N}{n}+\tfrac{N^4}{n^3})).\]
Again recognising $\Enl[Z(k)] = \sum_{i=k}^N \E[Y(i)]$, we get the desired result.
\end{proof}

\section{Large $k$: a simple exploration process argument}\label{sec:explore}

The next step is to prove Lemma \ref{le:explore}. To bound $\Pnl(L_1>k)$ when $k$ is large (say $k\ge n^{3/4}$) it is easiest to use a different approach from our usual combinatorial arguments. We do not aim to give best possible bounds, and instead extract an argument from \cite{nachmias_peres:CRG_mgs}. An improved bound (in particular not including the factor of $n^{1/4}$) could be obtained by more closely following the proof in \cite{nachmias_peres:CRG_mgs}.

\begin{proof}[Proof of Lemma \ref{le:explore}]
We use the following exploration process: start with one vertex, $v$, in a queue. At each step, choose a vertex from the queue, remove it from the queue, and add to the queue all of its neighbours that have not previously been in the queue. Stop when the queue is empty; at this stage the set of all vertices that have been in the queue at any step is exactly the set of vertices of $\Cc(v)$, the connected component containing $v$.

Let $V_j$ be the vertex chosen from the queue at step $j$. Let $\eta_j$ be the number of neighbours of $V_j$ that have not been in the list up to step $j$, and $B_j$ the number of neighbours of $V_j$ that are not in the set $\{V_1,\ldots,V_{j-1}\}$. Note that for each $j\ge 1$,
\begin{itemize}
\item $\eta_j$ is the number of vertices added to the queue at step $j$;
\item $\eta_j-1$ is the change in the length of the queue at step $j$;
\item $\eta_j \le B_j$;
\item the random variable $B_j$ is binomially distributed with parameters $n-j$ and $\pnl$, and is independent of $B_1,\ldots,B_{j-1}$.
\end{itemize}
Let $Z_k =\sum_{j=1}^k (\eta_j-1)$ and $T=\min\{k\ge1 : Z_k=0\}$. Then $T$ is the first step at which the queue is empty, and since we removed one vertex at each step, we must have $|\Cc(v)|=T$. Thus for any $\mu>0$,
\begin{equation}\label{eq:Zmarkov}
\Pnl(|\Cc(v)| > k) \le \Pnl(Z_{k} \ge 1) = \Pnl(e^{\mu Z_{k}} \ge e^\mu) \le \Enl[e^{\mu Z_{k}}].
\end{equation}
The random variables $\eta_1, \eta_2,\ldots$ are not independent, so we work instead with the independent binomial random variables $B_1, B_2,\ldots$ mentioned above. Then for any $\mu > 0$ we have
\[\Enl[e^{\mu(B_j-1)}] = e^{-\mu}(1+(e^\mu-1)\pnl)^{n-j}.\]
For $\mu\le 1$, $e^\mu - 1 \le \mu + \mu^2$, and for any $x$, $1+x\le e^x$. Therefore for $\mu\le 1$ the above is at most
\[e^{-\mu} e^{(n-j)\mu\pnl + (n-j)\mu^2\pnl} \le \exp\big(-\mu(1-n\pnl) - j\mu\pnl + \mu^2 n\pnl\big).\]
Recalling that $\pnl = \frac{1}{n}(1+\frac{\lambda}{n^{1/3}})$, we obtain
\[\Enl[e^{\mu(B_j-1)}] \le \exp\Big(\frac{\mu\lambda}{n^{1/3}} - j\mu\pnl + \mu^2 n \pnl \Big).\]
Since $Z_k = \sum_{j=1}^k (\eta_j-1) \le \sum_{j=1}^k (B_j-1)$, substituting this estimate into \eqref{eq:Zmarkov} we have that for any $\mu\in(0,1]$,
\[\Pnl(|\Cc(v)|> k) \le \prod_{j=1}^{k} \exp\Big(\frac{\mu\lambda}{n^{1/3}} - j\mu\pnl + \mu^2 n \pnl \Big) \le \exp\Big(\frac{k\mu\lambda}{n^{1/3}} - \frac{k^2}{2} \mu \pnl + k \mu^2 n\pnl\Big).\]
Choosing $\mu=k/(4n)$, we get
\[\Pnl(|\Cc(v)|> k) \le \exp\Big(\frac{k^2\lambda}{4n^{4/3}} - \frac{k^3 \pnl}{8n} + \frac{k^3 \pnl}{16n}\Big) = \exp\Big(-\frac{k^3}{16n^2}\Big(1+\frac{\lambda}{n^{1/3}} - \frac{4 n^{2/3}\lambda}{k}\Big).\]
If $k\ge n^{3/4}$ and $\lambda\le n^{1/12}/5$ then $4n^{2/3}\lambda/k\le 4/5$, so finally
\[\Pnl(|\Cc(v)|> k) \le \exp\Big(-\frac{k^3}{16n^2}\Big(\frac{1}{5}+\frac{\lambda}{n^{1/3}}\Big)\Big).\]
Clearly the probability on the left is increasing in $\lambda$, and the right-hand side is decreasing in $\lambda$, so without loss of generality we may take $\lambda=0$ (we would get a slightly better bound if we took $\lambda$ as large as possible, namely $\lambda=n^{1/12}/5$, but we choose $\lambda=0$ for simplicity). Finally,
\begin{align*}
\Pnl(L_1> k) \le \Pnl(\#\{u : |\Cc(u)|> k\}> k) &\le \frac{1}{k}\Enl[\#\{u: |\Cc(u)|> k\}]\\
&= \frac{n}{k} \P(|\Cc(v)|> k) \le n^{1/4}\exp\Big(-\frac{k^3}{80 n^2}\Big).\qedhere
\end{align*}
\end{proof}

\section{Proof of Theorem \ref{thm:YZ}: moment bounds to probabilities}\label{sec:momtoprob}

In this section we put together the moment estimates that we proved in previous sections to complete the proof of Theorem \ref{thm:YZ}. We will at several points use the inequality
\begin{equation}\label{eq:2m}
\P( V \ge 1 ) \ge \frac{\E[V]^2}{\E[V^2]}
\end{equation}
which holds for any non-negative integer-valued random variable $V$ and is proved by applying the Cauchy-Schwarz inequality to $\E[V\ind_{\{V\ge 1\}}]$. Define
\begin{equation}\label{eq:Pmerr}
\Cl[error]{Pmerr} = \Cr{Pmerr}(k,n,\lambda) = \frac{n}{k^{3/2}} e^{-G_\lambda(k/n^{2/3}) + \lambda k^2/n^{4/3} - k^3/n^2}.
\end{equation}
We now give two lemmas that relate the probabilities of events that appeared in Theorem \ref{thm:YZ} with the expectations that we calculated in Propositions \ref{prop:Y} and \ref{prop:Z}.

\begin{lem}\label{le:PtoY}
Suppose that $-n^{1/12}\le \lambda\le n^{1/12}/5$ and $(3\lambda\wedge 1)n^{2/3}\le k \le n^{3/4}$. Then
\[\Pnl(L_1 = k) = \Enl[Y(k)](1+O(\Cr{Pmerr})) + O(\Cr{A1err}).\]
\end{lem}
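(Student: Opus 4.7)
The plan is to start from the set-theoretic identity
\[\Pnl(L_1 = k) = \Pnl(Y(k) \ge 1) - \Pnl(Y(k) \ge 1, L_1 > k)\]
and prove $\Pnl(Y(k) \ge 1) = \Enl[Y(k)](1 + O(\Cr{Pmerr}))$ together with $\Pnl(Y(k) \ge 1, L_1 > k) = O(\Enl[Y(k)] \Cr{Pmerr}) + O(\Cr{A1err})$, which combine to give the claim.

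For $\Pnl(Y(k) \ge 1)$, Markov's inequality gives the upper bound $\le \Enl[Y(k)]$. For the matching lower bound I would apply the second-moment inequality \eqref{eq:2m} combined with the formula $\Enl[Y(k)^2] = \Enl[Y(k)] + \Enl[Y(k)]^2 e^{\lambda k^2/n^{4/3} - k^3/n^2}(1+o(1))$ from Lemma \ref{le:2mom}. This yields $\Pnl(Y(k) \ge 1) \ge \Enl[Y(k)](1 - \Enl[Y(k)] e^{\lambda k^2/n^{4/3} - k^3/n^2}(1+o(1)))$ whenever the bracket is positive. Using Proposition \ref{prop:Y}, a short computation shows $\Enl[Y(k)]\, e^{\lambda k^2/n^{4/3} - k^3/n^2} = (8\pi)^{-1/2}(k/n)^2\, \Cr{Pmerr}(1+o(1))$, and since $k \le n^{3/4}$ forces $(k/n)^2 \le n^{-1/2}$, the defect is absorbed into $O(\Cr{Pmerr})$. (If instead $\Enl[Y(k)]\, e^{\lambda k^2/n^{4/3} - k^3/n^2} > 1/2$, then $\Cr{Pmerr} \gtrsim n^{1/2}$ and the trivial upper bound $\Pnl(L_1=k) \le \Enl[Y(k)]$ together with $\Pnl(L_1=k) \ge 0$ already yield the claimed equality.)

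For $\Pnl(Y(k) \ge 1, L_1 > k)$, any component of size exceeding $k$ either has size in $(k, n^{3/4}]$ (and so is counted by $Z(k+1)$) or has size $> n^{3/4}$, giving
\[\Pnl(Y(k) \ge 1, L_1 > k) \le \Enl[Y(k) Z(k+1)] + \Pnl(L_1 > n^{3/4}).\]
Lemma \ref{le:explore} applied at $\lfloor n^{3/4} \rfloor$ controls the last term by $\Cr{A1err}$. For the first, I would expand $\Enl[Y(k) Z(k+1)] = \sum_{i=k+1}^{\lfloor n^{3/4} \rfloor} \Enl[Y(k) Y(i)]$ and use the cross-moment bound of Lemma \ref{le:2mom}. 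The exponent $\lambda ki/n^{4/3} - ki(k+i)/(2n^2)$ has derivative $k[2\lambda n^{2/3} - k - 2i]/(2n^2)$, which is negative throughout $i \ge k$ in our regime (since the hypothesis on $k$ ensures $3k > 2\lambda n^{2/3}$), so the exponent is dominated by its value at $i = k+1$, matching $\lambda k^2/n^{4/3} - k^3/n^2 + O(k/n^2)$. Factoring this out of the sum and invoking Proposition \ref{prop:Z} to recognise $\sum \Enl[Y(i)] = \Enl[Z(k+1)]$, the ratio of the resulting bound to $\Enl[Y(k)] \Cr{Pmerr}$ simplifies (using $k^{3/2} = a^{3/2} n$ with $a = k/n^{2/3}$) to a constant times $a^2/G'_\lambda(a)$, which is bounded absolutely in the regime $a \ge 3\lambda$ because $G'_\lambda(a) = (a - 2\lambda)(3a - 2\lambda)/8 \ge 7a^2/72$.

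The hard part will be the algebraic bookkeeping in this last step: tracking the exponentials from $G_\lambda(a)$, from $\Cr{Pmerr}$, and from the cross-moment correction $e^{\lambda k^2/n^{4/3} - k^3/n^2}$, and verifying that they collapse cleanly onto the polynomial factor $a^2/G'_\lambda(a)$. The fact that they do is exactly what motivates the definition \eqref{eq:Pmerr} of $\Cr{Pmerr}$: it records the conditional penalty for a second component of size $\ge k$ appearing alongside one of size exactly $k$, as encoded in Lemma \ref{le:2mom}.
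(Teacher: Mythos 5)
Your proposal is correct and follows essentially the same route as the paper: decompose $\{L_1=k\}$ as $\{Y(k)\ge 1\}$ minus the event that a larger component exists, lower-bound $\Pnl(Y(k)\ge 1)$ by $\Enl[Y(k)]^2/\Enl[Y(k)^2]$ via \eqref{eq:2m} and Lemma \ref{le:2mom}, control the cross term $\sum_{j>k}\Enl[Y(k)Y(j)]\le \Enl[Y(k)]\Enl[Z(k+1)]e^{\lambda k^2/n^{4/3}-k^3/n^2}(1+O(1))$ using the monotonicity of the exponent, handle components larger than $n^{3/4}$ with Lemma \ref{le:explore}, and identify the error with $\Cr{Pmerr}$ via Propositions \ref{prop:Y} and \ref{prop:Z}. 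The only differences (estimating the diagonal defect directly from Proposition \ref{prop:Y} rather than bounding $\Enl[Y(k)]$ by $\Enl[Z(k)]$, and writing out the $a^2/G'_\lambda(a)$ bookkeeping explicitly) are cosmetic.
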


\begin{proof}
Clearly $\Pnl(L_1=k)\le \Pnl(Y(k)\ge 1)\le \Enl[Y(k)]$, so we may concentrate on the lower bound. Let $N=\lfloor n^{3/4}\rfloor$. We have
\begin{align*}
\P(L_1=k) &= \P(Y(k)\ge 1) - \P(Y(k)\ge 1,\, \exists j\ge k \hbox{ such that } Y(j)\ge 1)\\
&\ge \P(Y(k)\ge 1) - \sum_{j=k+1}^N \P(Y(k)\ge 1,\, Y(j)\ge 1) - \P(L_1 > N)\\
&\ge \frac{\E[Y(k)]^2}{\E[Y(k)^2]} - \sum_{j=k+1}^N \E[Y(k)Y(j)] - \P(L_1>N) 
\end{align*}
where for the last line we have applied \eqref{eq:2m} and Markov's inequality.

By Lemma \ref{le:2mom} we have
\[\Enl[Y(k)^2] = \Enl[Y(k)]\big(1+O\big(\Enl[Y(k)]e^{\lambda k^2/n^{4/3}-k^3/n^2}\big)\big),\]
so
\[\frac{\E[Y(k)]^2}{\E[Y(k)^2]} = \Enl[Y(k)]\big(1+O\big(\Enl[Y(k)]e^{\lambda k^2/n^{4/3}-k^3/n^2}\big)\big).\]
Lemma \ref{le:2mom} also tells us that for $j>k$,
\[\Enl[Y(k)Y(j)] = \Enl[Y(k)]\Enl[Y(j)] e^{\lambda jk/n^{4/3} - jk(j+k)/(2n^2)}(1+ O(1)),\]
so for $\lambda \le k/n^{2/3}$,
\begin{align*}
\sum_{j=k+1}^N \Enl[Y(k)Y(j)] &\le \Enl[Y(k)]\sum_{j=k+1}^N \Enl[Y(j)] e^{\lambda k^2/n^{4/3}-k^3/n^2}(1+ O(1))\\
&= \Enl[Y(k)]\Enl[Z(k+1)] e^{\lambda k^2/n^{4/3}-k^3/n^2} (1+O(1)).
\end{align*}
Thus (bounding both $\Enl[Y(k)]$ and $\Enl[Z(k+1)]$ above by $\Enl[Z(k)]$)
\[\P(L_1=k) \ge \Enl[Y(k)]\big(1+O\big(\Enl[Z(k)] e^{\lambda k^2/n^{4/3}-k^3/n^2}\big)\big) - \P(L_1>N).\]
Lemma \ref{le:explore} gives
\[\Pnl(L_1>N) \le n^{1/4}\exp(-n^{1/4}/80)=\Cr{A1err},\]
and Proposition \ref{prop:Z} gives
\[\Enl[Z(k)] = O\Big(\frac{n}{k^{3/2}} e^{-G_\lambda(k^{2/3}/n)}\Big).\]
The result follows.
\end{proof}

\begin{lem}\label{le:PtoZ}
Suppose that $-n^{1/12}\le \lambda\le n^{1/12}/5$ and $(3\lambda\wedge 1)n^{2/3}+1\le k \le n^{3/4}$. Then
\[\Pnl(L_1 \ge k) = \Enl[Z(k)](1+O(\Cr{Pmerr})) + O(\Cr{A1err}).\]
\end{lem}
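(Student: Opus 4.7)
The plan is to sandwich $\Pnl(L_1\ge k)$ using the elementary decomposition $\{Z(k)\ge 1\}\subseteq\{L_1\ge k\}\subseteq\{Z(k)\ge 1\}\cup\{L_1>n^{3/4}\}$. This is cleaner than the proof of Lemma~\ref{le:PtoY}, where $\{L_1=k\}$ required an inclusion--exclusion correction to exclude the existence of a larger component; here the decomposition is a genuine union, so no such correction is needed.

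For the upper bound, Markov's inequality gives $\Pnl(Z(k)\ge 1)\le\Enl[Z(k)]$, and Lemma~\ref{le:explore} (applied with $n^{3/4}$ in place of $k$) gives $\Pnl(L_1>n^{3/4})\le\Cr{A1err}$, hence $\Pnl(L_1\ge k)\le\Enl[Z(k)]+\Cr{A1err}$.

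For the lower bound, the second-moment inequality~\eqref{eq:2m} combined with the bound on $\Enl[Z(k)^2]$ from Lemma~\ref{le:2mom} (applicable because our hypotheses in particular force $k\ge\lambda n^{2/3}$) yields
\[\Pnl(L_1\ge k)\ge\Pnl(Z(k)\ge 1)\ge\frac{\Enl[Z(k)]}{1+\Enl[Z(k)]\,e^{\lambda k^2/n^{4/3}-k^3/n^2}(1+O(n^{-1/11}))}.\]
Since $(1+x)^{-1}\ge 1-x$ for $x\ge 0$, it remains to verify that the correction term $\Enl[Z(k)]\,e^{\lambda k^2/n^{4/3}-k^3/n^2}$ is of order $\Cr{Pmerr}$, up to a summand absorbable into $\Cr{A1err}$. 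To see this, set $a=k/n^{2/3}$ and apply Proposition~\ref{prop:Z}:
\[\Enl[Z(k)]=\frac{a^{1/2}}{(8\pi)^{1/2}\,G'_\lambda(a)}\,e^{-G_\lambda(a)}(1+O(\Cr{M1err}))+O(\Cr{A1err}).\]
The factorisation $G'_\lambda(a)=\tfrac18(3a-2\lambda)(a-2\lambda)$, together with the hypothesis (which in particular forces $a\ge 1\vee 3\lambda$), makes both factors of order $a$, so $G'_\lambda(a)\asymp a^2$ and the main part of $\Enl[Z(k)]$ is $O((n/k^{3/2})\,e^{-G_\lambda(a)})$. Multiplying by $e^{\lambda k^2/n^{4/3}-k^3/n^2}$ reproduces a constant multiple of $\Cr{Pmerr}$; meanwhile, under our hypotheses the exponent $\lambda k^2/n^{4/3}-k^3/n^2$ is bounded above by $0$, so the $\Cr{A1err}$ piece simply remains $O(\Cr{A1err})$. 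Combining the two sides of the sandwich delivers the claim.

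The only substantive step is the comparison $G'_\lambda(a)\asymp a^2$, which is precisely what requires $k$ to exceed a multiple of $\lambda n^{2/3}$; the rest is algebraic bookkeeping paralleling (and simpler than) the proof of Lemma~\ref{le:PtoY}.
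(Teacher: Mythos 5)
Your proof is correct and follows essentially the same route as the paper's: the same sandwich $\{Z(k)\ge 1\}\subseteq\{L_1\ge k\}\subseteq\{Z(k)\ge 1\}\cup\{L_1>n^{3/4}\}$, Markov's inequality plus Lemma \ref{le:explore} for the upper bound, and \eqref{eq:2m} with Lemma \ref{le:2mom} and Proposition \ref{prop:Z} (using $G'_\lambda(a)\asymp a^2$) to identify the correction term with $\Cr{Pmerr}$ for the lower bound. Your reading of the hypothesis as forcing $k\ge\lambda n^{2/3}$ is also exactly what the paper's own proof uses, so there is no discrepancy there.
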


\begin{proof}
First note that
\[\Pnl(L_1\ge k) \le \Pnl(Z(k)\ge 1) + \Pnl(L_1> N) \le \Enl[Z(k)] + \Pnl(L_1> N).\]
Lemma \ref{le:explore} tells us that $\Pnl(L_1> N)\le \Cr{A1err}$, so the upper bound is done. For the lower bound, by \eqref{eq:2m},
\[\Pnl(L_1\ge k) \ge \Pnl(Z(k)\ge 1) \ge \frac{\Enl[Z(k)]^2}{\Enl[Z(k)^2]}.\]
Lemma \ref{le:2mom} tells us that
\[\Enl[Z(k)^2] \le \Enl[Z(k)] + \Enl[Z(k)]^2 \exp\left(\frac{\lambda k^2}{n^{4/3}} - \frac{k^3}{n^2}\right) (1+O(n^{-1/11})),\]
and Proposition \ref{prop:Z} gives
\[\Enl[Z(k)] = O\Big(\frac{n}{k^{3/2}} e^{-G_\lambda(k^{2/3}/n)}\Big).\]
Thus
\[\frac{\Enl[Z(k)]^2}{\Enl[Z(k)^2]} \ge \Enl[Z(k)] \Big(1+O\Big(\frac{n}{k^{3/2}} e^{-G_\lambda(k^{2/3}/n)+\lambda k^2/n^{4/3} - k^3/n^2}\Big)\Big) = \Enl[Z(k)](1+O(\Cr{Pmerr}))\]
as required.
\end{proof}

Given these lemmas, our first main result follows almost trivially.

\begin{proof}[Proof of Theorem \ref{thm:YZ}]
Lemma \ref{le:PtoY} tells us that
\[\Pnl(L_1 = k) = \Enl[Y(k)](1+O(\Cr{Pmerr})) + O(\Cr{A1err}).\]
But by Proposition \ref{prop:Y},
\[\Enl[Y(k)] = \frac{k^{1/2}}{(8\pi)^{1/2}n} e^{-G_\lambda(k/n^{2/3})} (1+O(\Cr{M1err})).\]
Since $G_\lambda(x)\ge 0$ for all $x\ge 0$, and by the conditions of the theorem we have $\lambda\le k/n^{2/3}$, we see that $\Cr{Pmerr}\le n/k^{3/2}\le \Cr{M1err}$. This gives part (a) of the result.

For part (b), Lemma \ref{le:PtoZ} gives us
\[\Pnl(L_1 \ge k) = \Enl[Z(k)](1+O(\Cr{Pmerr})) + O(\Cr{A1err}).\]
Then by Proposition \ref{prop:Z},
\[\Enl[Z(k)] = (1+O(\Cr{M1err})) \frac{1}{(8\pi)^{1/2}} \frac{(k/n^{2/3})^{1/2}}{G'_\lambda(k/n^{2/3})} e^{-G_\lambda(k/n^{2/3})} + O(\Cr{A1err}).\]
Again we observe that $\Cr{Pmerr}\le \Cr{M1err}$ which completes the proof.
\end{proof}

\section{The size of the component containing a particular vertex}\label{sec:add}

We now turn to proving Theorem \ref{thm:vLYZ}, which concerns the component containing a particular vertex $v$. The proof will rely only on the first moment estimates that we have already developed. Along the way we will also prove Propositions \ref{prop:vl} and \ref{prop:vsmallk}. We recall that $\Cc(v)$ is the connected component containing $v$, $|\Cc(v)|$ is the number of vertices in $\Cc(v)$ and $E(\Cc(v))$ is the number of edges in $\Cc(v)$.

Everything will be based on the following elementary lemma.

\begin{lem}\label{le:vtoX}
For any $n,k\ge 1$ and $l\ge -1$,
\[\Pnl\big(|\Cc(v)|=k,\, E(\Cc(v))=k+l\big) = \frac{k}{n}\Enl[X(k,k+l)].\]
\end{lem}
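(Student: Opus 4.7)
The plan is a short double-counting / exchangeability argument; no fancy estimates are needed. First I would introduce the indicator
\[\sum_{u} \ind_{\{|\Cc(u)|=k,\, E(\Cc(u))=k+l\}},\]
where the sum runs over all $n$ vertices. The key combinatorial observation is that every component counted by $X(k,k+l)$ contributes exactly $k$ vertices $u$ to this sum (one for each vertex in the component), since a vertex lies in a unique component and the size/edge count of that component is a property of the vertex. Hence the sum equals $k \cdot X(k,k+l)$ pointwise on the underlying probability space.

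Next I would take $\Enl$ of both sides, swap expectation and the finite sum over $u$, and use exchangeability of the vertices under $\Pnl$: the law of $G(n,\pnl)$ is invariant under relabellings, so $\Pnl(|\Cc(u)|=k,\,E(\Cc(u))=k+l)$ does not depend on $u$. Thus
\[n \cdot \Pnl\big(|\Cc(v)|=k,\, E(\Cc(v))=k+l\big) = k \cdot \Enl[X(k,k+l)],\]
and dividing by $n$ gives the claimed identity.

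The only thing to be a little careful about is the edge case $l=-1$: then $k+l = k-1$ and the components counted are trees on $k$ vertices, which still fit into the framework (the assumption $l \ge -1$ is exactly what is needed for $C(k,k+l)$ to be nonzero and for the indicator argument to make sense). There is no genuine obstacle here; the statement is essentially a definition-chasing exercise exploiting vertex symmetry, and I expect the author's proof to consist of a single line.
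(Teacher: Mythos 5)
Your proof is correct, and it takes a slightly different route from the paper's. The paper simply repeats the counting argument behind its formula \eqref{eq:EX}: it writes $\Pnl(|\Cc(v)|=k,\,E(\Cc(v))=k+l)$ explicitly as $\binom{n-1}{k-1}C(k,k+l)\pnl^{k+l}(1-\pnl)^{\binom{k}{2}-k-l+(n-k)k}$ (the vertex set must contain $v$, hence $\binom{n-1}{k-1}$ choices) and then observes that $\binom{n-1}{k-1}/\binom{n}{k}=k/n$, so the expression is $\tfrac{k}{n}\Enl[X(k,k+l)]$. You instead avoid any explicit formula: the pointwise identity $\sum_u \ind_{\{|\Cc(u)|=k,\,E(\Cc(u))=k+l\}} = k\,X(k,k+l)$ together with vertex exchangeability of $G(n,\pnl)$ gives $n\,\Pnl(|\Cc(v)|=k,\,E(\Cc(v))=k+l)=k\,\Enl[X(k,k+l)]$ directly. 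Both arguments are one-liners and encode the same fact (each counted component contains exactly $k$ of the $n$ vertices); yours has the mild advantage of working verbatim for any vertex-exchangeable random graph model, without reference to the product structure of the edge probabilities, while the paper's has the advantage of staying within the explicit combinatorial framework it has already set up, so no new symmetry observation is needed. Your remark on $l=-1$ is fine but inessential: the identity holds for all $l$, both sides vanishing when no connected graph with $k+l$ edges on $k$ vertices exists.
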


\begin{proof}
We merely note that, by the same argument as for \eqref{eq:EX},
\begin{align*}
\Pnl\big(|\Cc(v)|=k,\, E(\Cc(v))=k+l\big) &= \binom{n-1}{k-1} C(k,k+l) \pnl^{k+l} (1-\pnl)^{\binom{k}{2}-k-l+(n-k)k}\\
&= \frac{k}{n}\Enl[X(k,k+l)].\qedhere
\end{align*}
\end{proof}

\begin{proof}[Proof of Proposition \ref{prop:vl}]
Simply combine Lemma \ref{le:vtoX} with Corollary \ref{cor:EX}.
\end{proof}

\begin{proof}[Proof of Proposition \ref{prop:vsmallk}]
Let $L=\lfloor 4n^{1/4}\rfloor$. By Lemma \ref{le:vtoX},
\[\Pnl(|\Cc(v)|=k) = \sum_{l=-1}^L \frac{k}{n}\Enl[X(k,k+l)] + \sum_{l=L+1}^{\binom{k}{2}-k} \frac{k}{n}\Enl[X(k,k+l)].\]
By the first part of Proposition \ref{prop:main},
\[\sum_{l=-1}^L \frac{k}{n}\Enl[X(k,k+l)] = \sum_{l=-1}^L \frac{k}{n}\frac{C(k,k+l)}{(2\pi)^{1/2} n^l k^{k+1/2}} e^{-F_\lambda(k/n^{2/3})}(1+O(\Cr{EXerr})).\]
Since $\Cr{EXerr}$ is at most a constant, this means that there exist $0<c_1<c_2<\infty$ such that
\[\frac{c_1 e^{-F_\lambda(k/n^{2/3})}}{n k^{k-1/2}} \sum_{l=-1}^L \frac{C(k,k+l)}{n^l} \le \sum_{l=-1}^L \frac{k}{n}\Enl[X(k,k+l)] \le \frac{c_2 e^{-F_\lambda(k/n^{2/3})}}{n k^{k-1/2}} \sum_{l=-1}^L \frac{C(k,k+l)}{n^l}.\]
Corollary \ref{cor:smallk} then gives
\[c_1 k^{-3/2}e^{-F_\lambda(k/n^{2/3})} \le \sum_{l=-1}^L \frac{k}{n}\Enl[X(k,k+l)] \le c_2 k^{-3/2}e^{-F_\lambda(k/n^{2/3})}.\]
Thus it remains to show that $\sum_{l=L+1}^\infty \frac{k}{n}\Enl[X(k,k+l)]$ is of smaller order. But this follows from the second part of Proposition \ref{prop:main}.
\end{proof}

\begin{proof}[Proof of Theorem \ref{thm:vLYZ}]
By Lemma \ref{le:vtoX},
\[\Pnl(|\Cc(v)|=k) = \frac{k}{n}\sum_{l=-1}^{\binom{k}{2}-k} \Enl[X(k,k+l)] = \frac{k}{n} \Enl[Y(k)].\]
Part (a) now follows from Proposition \ref{prop:Y}.

For part (b) of the theorem, let $N=\lfloor n^{3/4}\rfloor$. By Lemma \ref{le:vtoX},
\[\Pnl(|\Cc(v)|\ge an^{2/3},\, L_1 \le N) = \sum_{j=k}^N \frac{j}{n}\sum_{l\ge -1} \Enl[X(j,j+l)] = \sum_{j=k}^N \frac{j}{n}\Enl[Y(j)].\]
By Proposition \ref{prop:Y}, this equals
\begin{multline*}
\sum_{j=k}^N \frac{j^{3/2}}{(8\pi)^{1/2}n^2} e^{-G_\lambda(j/n^{2/3})}(1+O(\Cr{M1err}(j,n,\lambda)))\\
= \frac{1}{(8\pi)^{1/2}n}\sum_{j=k}^N \Big(\frac{j}{n^{2/3}}\Big)^{3/2} e^{-G_\lambda(j/n^{2/3})}(1+O(\Cr{M1err}(j,n,\lambda))).
\end{multline*}
Applying Lemma \ref{le:Zsumtoint}, this is
\[\frac{1}{(8\pi)^{1/2}n^{1/3}} \frac{(k/n^{2/3})^{3/2}}{G'_\lambda(k/n^{2/3})} e^{-G_\lambda(k/n^{2/3})} (1+O(\Cr{M1err}(k,n,\lambda))).\]
By Lemma \ref{le:explore}, $\Pnl(L_1>N) = O(\Cr{A1err}(n))$, so we have shown that
\[\Pnl(|\Cc(v)|\ge an^{2/3}) = \frac{1}{(8\pi)^{1/2}n^{1/3}}\frac{a^{3/2}}{G'_\lambda(a)} e^{-G_\lambda(a)} (1+O(\Cr{M1err}(k,n,\lambda))) + O(\Cr{A1err}(n))\]
which is (b).
\end{proof}

\subsection*{Acknowledgements}
Thanks to Bat\i{} \c{S}eng\"ul for several helpful conversations, and to two anonymous referees for their valuable comments and suggestions. I am grateful also for support from EPSRC fellowship EP/K007440/1, and a Royal Society University Research Fellowship.

\bibliographystyle{plain}

\begin{thebibliography}{10}

\bibitem{addario_berry_et_al:distn_CRG}
Louigi Addario-Berry, Nicolas Broutin, and Christina Goldschmidt.
\newblock Critical random graphs: limiting constructions and distributional
  properties.
\newblock {\em Electron. J. Probab}, 15(25):741--775, 2010.

\bibitem{broutin_et_al:continuum_limit_critical_rgs}
Louigi Addario-Berry, Nicolas Broutin, and Christina Goldschmidt.
\newblock The continuum limit of critical random graphs.
\newblock {\em Probability Theory and Related Fields}, 152(3-4):367--406, 2012.

\bibitem{aidekon_et_al:large_devs_thinned_levy}
Elie A{\"\i}d{\'e}kon, Remco van~der Hofstad, Sandra Kliem, and Johan~S.H. van
  Leeuwaarden.
\newblock Large deviations for power-law thinned {L}{\'e}vy processes.
\newblock {\em Stochastic Processes and their Applications}, 126(5):1353--1384,
  2016.

\bibitem{aldous:critical_random_graphs}
David Aldous.
\newblock {Brownian excursions, critical random graphs and the multiplicative
  coalescent}.
\newblock {\em The Annals of Probability}, 25(2):812--854, 1997.

\bibitem{bender_et_al:asymptotic_connected_graphs}
Edward~A. Bender, E.~Rodney Canfield, and Brendan~D. McKay.
\newblock The asymptotic number of labeled connected graphs with a given number
  of vertices and edges.
\newblock {\em Random Struct. Algorithms}, 1(2):127--169, 1990.

\bibitem{bet_et_al:heavy_traffic_queues}
Gianmarco Bet, Remco van~der Hofstad, and Johan~SH van Leeuwaarden.
\newblock Heavy-traffic analysis through uniform acceleration of queues with
  diminishing populations.
\newblock 2014.
\newblock Preprint: \texttt{http://arxiv.org/abs/1412.5329}.

\bibitem{bhamidi_et_al:scaling_inhom_RGs}
Shankar Bhamidi, Remco van~der Hofstad, and Johan~S.H. van Leeuwaarden.
\newblock Scaling limits for critical inhomogeneous random graphs with finite
  third moments.
\newblock {\em Electronic Journal of Probability}, 15:1682--1702, 2010.

\bibitem{bollobas_book}
B{\'e}la Bollob{\'a}s.
\newblock {\em Random graphs}, volume~73 of {\em Cambridge Studies in Advanced
  Mathematics}.
\newblock Cambridge University Press, Cambridge, second edition, 2001.

\bibitem{bollobas_riordan:asymptotic_normality_RG}
B{\'e}la Bollob{\'a}s and Oliver Riordan.
\newblock Asymptotic normality of the size of the giant component via a random
  walk.
\newblock {\em Journal of Combinatorial Theory, Series B}, 102(1):53--61, 2012.

\bibitem{dembo_et_al:component_sizes_quantum_RG}
Amir Dembo, Anna Levit, and Sreekar Vadlamani.
\newblock Component sizes for large quantum {E}rd{\H{o}}s-{R}{\'{e}}nyi graph
  near criticality.
\newblock To appear in \emph{Annals of Probability}. Preprint:
  \texttt{http://arxiv.org/abs/1404.5705}, 2017.

\bibitem{dhara_et_al:critical_window_config}
Souvik Dhara, Remco van~der Hofstad, Johan~S.H. van Leeuwaarden, and Sanchayan
  Sen.
\newblock Critical window for the configuration model: finite third moment
  degrees.
\newblock {\em Electronic Journal of Probability}, 22, 2017.

\bibitem{durrett2007random}
Richard Durrett.
\newblock {\em Random graph dynamics}, volume 200.
\newblock Citeseer, 2007.

\bibitem{erdos_renyi:RG1}
Paul Erd{\"o}s and Alfr{\'e}d R{\'e}nyi.
\newblock On random graphs, {I}.
\newblock {\em Publicationes Mathematicae (Debrecen)}, 6:290--297, 1959.

\bibitem{erdos_renyi:evolution_RG}
Paul Erd{\"o}s and Alfr{\'e}d R{\'e}nyi.
\newblock On the evolution of random graphs.
\newblock {\em Publ. Math. Inst. Hung. Acad. Sci}, 5(17-61):43, 1960.

\bibitem{erdos_renyi:evolution_RG2}
Paul Erdos and Alfr{\'e}d R{\'e}nyi.
\newblock On the evolution of random graphs.
\newblock {\em Bull. Inst. Internat. Statist}, 38(4):343--347, 1961.

\bibitem{hofstad_et_al:critical_epidemics}
Remco van~der Hofstad, A.J.E.M. Janssen, and Johan S.H.~van Leeuwaarden.
\newblock Critical epidemics, random graphs, and {B}rownian motion with a
  parabolic drift.
\newblock {\em Advances in Applied Probability}, 42(4):1187--1206, 2010.

\bibitem{hofstad_et_al:local_limit_CRG}
Remco van~der Hofstad, Wouter Kager, and Tobias M{\"u}ller.
\newblock A local limit theorem for the critical random graph.
\newblock {\em Electron. Commun. Probab}, 14:122--131, 2009.

\bibitem{janson:brownian_wright}
Svante Janson.
\newblock Brownian excursion area, {W}right's constants in graph enumeration,
  and other {B}rownian areas.
\newblock {\em Probability Surveys}, 4:80--145, 2007.

\bibitem{janson_et_al:random_graphs}
Svante Janson, Tomasz Luczak, and Andrzej Rucinski.
\newblock {\em Random graphs}, volume~45.
\newblock John Wiley \& Sons, 2011.

\bibitem{joseph:components_critical_RG}
Adrien Joseph.
\newblock The component sizes of a critical random graph with given degree
  sequence.
\newblock {\em The Annals of Applied Probability}, 24(6):2560--2594, 2014.

\bibitem{luczak:sparse_connected_graphs}
Tomasz Luczak.
\newblock On the number of sparse connected graphs.
\newblock {\em Random Struct. Algorithms}, 1(2):171--174, 1990.

\bibitem{luczak_et_al:structure_RG}
Tomasz {\L}uczak, Boris Pittel, and John~C. Wierman.
\newblock The structure of a random graph at the point of the phase transition.
\newblock {\em Transactions of the American Mathematical Society},
  341(2):721--748, 1994.

\bibitem{nachmias_peres:CRG_mgs}
Asaf Nachmias and Yuval Peres.
\newblock The critical random graph, with martingales.
\newblock {\em Israel J. Math.}, 176:29--41, 2010.

\bibitem{oconnell:LDs_RGs}
Neil O'Connell.
\newblock Some large deviation results for sparse random graphs.
\newblock {\em Probability Theory and Related Fields}, 110(3):277--285, 1998.

\bibitem{pittel:largest_cpt_rg}
Boris Pittel.
\newblock On the largest component of the random graph at a nearcritical stage.
\newblock {\em J. Combin. Theory Ser. B}, 82(2):237--269, 2001.

\bibitem{riordan:phase_transition_config}
Oliver Riordan.
\newblock The phase transition in the configuration model.
\newblock {\em Combinatorics, Probability and Computing}, 21(1-2):265--299,
  2012.

\bibitem{robbins:remark_stirling}
Herbert Robbins.
\newblock A remark on {S}tirling's formula.
\newblock {\em The American Mathematical Monthly}, 62(1):26--29, 1955.

\bibitem{roberts_sengul:dynamical_ER}
M.I. Roberts and B.~{\c{S}}eng{\"{u}}l.
\newblock Exceptional times of the critical dynamical
  {E}rd{\H{o}}s-{R}{\'{e}}nyi graph.
\newblock To appear in \emph{Annals of Applied Probability}. Preprint:
  \texttt{http://arxiv.org/abs/1610.06000}.

\bibitem{steif:survey}
J.E. Steif.
\newblock {A survey of dynamical percolation}.
\newblock {\em Fractal Geometry and Stochastics IV}, pages 145--174, 2009.

\bibitem{hofstad_et_al:cluster_tails_power_law_RGs}
Remco van~der Hofstad, Sandra Kliem, and Johan~SH van Leeuwaarden.
\newblock Cluster tails for critical power-law inhomogeneous random graphs.
\newblock 2014.
\newblock Preprint: \texttt{http://arxiv.org/abs/1404.1727}.

\bibitem{hofstad_spencer:connected_graphs}
Remco Van Der~Hofstad and Joel Spencer.
\newblock Counting connected graphs asymptotically.
\newblock {\em European Journal of Combinatorics}, 27(8):1294--1320, 2006.

\bibitem{hofstad_et_al:mesoscopic_hierarchical}
Remco van~der Hofstad, Johan~S.H. van Leeuwaarden, and Clara Stegehuis.
\newblock Mesoscopic scales in hierarchical configuration models.
\newblock 2016.
\newblock Preprint: \texttt{http://arxiv.org/abs/1612.02668}.

\bibitem{voblyi:wright}
Vitalii~Antonievich Voblyi.
\newblock Wright and {S}tepanov-{W}right coefficients.
\newblock {\em Mathematical Notes}, 42(6):969--974, 1987.

\bibitem{wright:connectedIII}
Edward~M. Wright.
\newblock The number of connected sparsely edged graphs. {I}{I}{I}.
  {A}symptotic results.
\newblock {\em Journal of Graph Theory}, 4(4):393--407, 1980.

\end{thebibliography}
\def\cprime{$'$}

\end{document}